\numberwithin{equation}{section}
\newtheorem{theorem}{Theorem}[section]
\newtheorem{lemma}[theorem]{Lemma}
\newtheorem{remark}[theorem]{Remark}
\newtheorem{corollary}[theorem]{Corollary}
\newtheorem{proposition}[theorem]{Proposition}
\newtheorem{definition}[theorem]{Definition}
\newtheorem{fact}[theorem]{\textbf{Fact}}
\newcommand*{\QEDA}{\hfill\ensuremath{\blacksquare}}
\DeclareMathOperator{\Int}{Int}
\DeclareMathOperator{\cco}{Conv}
\DeclareMathOperator{\Aff}{Aff}
\DeclareMathOperator{\Gr}{\mathbf{G}}
\DeclareMathOperator{\ag}{\mathbf{F}}
\DeclareMathOperator{\im}{Im}
\begin{document}

\title[The hyperspace of $k$-dimensional closed convex sets]{The hyperspace of $k$-dimensional closed convex sets}

\author[A. Escobedo-Bustamente and  N. Jonard-P\'erez]{Adriana Escobedo-Bustamante and   Natalia Jonard-P\'erez}

\address{Adriana Escobedo-Bustamante\\ 
              Facultad de Ciencias Exactas, Universidad Ju\'arez del Estado de Durango, 34113. Durango, Dgo. M\'exico.}
            \email{adriana.escobedo@ujed.mx}

\address{Natalia Jonard-P\'erez\\ 
Departamento de  Matem\'aticas,
Facultad de Ciencias, Universidad Nacional Aut\'onoma de M\'exico, 04510 Ciudad de M\'exico, M\'exico.
              Mexico.}
\email{ nat@ciencias.unam.mx}

\keywords{Convex sets, Grassmannian, Hyperspace, $Q$-manifold, Hausdorff metric, Attouch-Wets metric, Fell Topology}

 \subjclass[2020]{Primary: 52A20, 52A21, 54B20, 54C10, 54C55. Secondary: 54H15, 57S25}

\thanks{This work has been supported by  PAPIIT grant IN101622  (UNAM, M\'exico).}


\maketitle

\begin{abstract} 
For every $n \geq 2$, let $\mathcal K_k^n$ denote the hyperspace of all $k$-dimensional closed convex subsets of the Euclidean space $\mathbb R^n$ endowed with the Atouch-Wets topology.  Let $\mathcal K_{k,b}^n$ be the subset of $\mathcal K_k^n$ consisting of all $k$-dimensional compact convex subsets. In this paper we explore the topology of  $\mathcal K_k^n$ and $\mathcal K_{k,b}^n$ and the relation of these hyperspaces with the Grassmann manifold $\Gr_k(n)$. We prove that both $\mathcal K_k^n$ and $\mathcal K_{k,b}^n$ are Hilbert cube manifolds with a fiber bundle structure over $\Gr_k(n)$. We also show that the fiber of $\mathcal K_{k,b}^n$ with respect to this fiber bundle structure is homeomorphic with $\mathbb R^{\frac{k(k+1)+2n}{2}}\times Q$, where $Q$ stands for the Hilbert cube. 
\end{abstract}

\section{Introduction}\label{sec:intro}

Let $H$ denote a Hilbert space and let $\mathcal K^H$ be the family of all closed convex subsets of $H$. There are several ways to equip $\mathcal K^H$ with an interesting topology. The Vietoris topology, the Fell topology, the Wijsman topology, the Atouch-Wets metric or the Hausdorff metric are classic examples.  We refer the reader to \cite{{Beer1993}} for a detailed study of these and other topologies.  In the last 50 years, the topological type of several subspaces of $\mathcal K^H$ (equipped with some of the mentioned topologies) have been strongly studied. For instance, in the seminal work \cite{Nadler}, the authors proved that the family of all compact convex subsets of a compact convex set $K$ with $\dim K\geq 2$ (equipped with the Hausdorff metric topology) is homeomorphic with the Hilbert cube, among other related results. 

Other hyperspaces of bounded convex sets have been studied in \cite{AntonyanNatalia} \cite{AntonyanNataliaSaulwidth}, \cite{Bazylevych-93}, \cite{Bazylevych-97}, \cite{Bazylevych Zarichnyi-2006}, \cite{JonardMerino}, \cite{Luisa-Nat1}, \cite{Sakai Bounded}.
For non-bounded convex sets, we refer the reader to \cite{SakaiYaguchi2006}, \cite{Sakai-Yang-2007} and \cite{Luisa-Nat}.
In particular, in \cite{Sakai-Yang-2007} the authors proved that if $H=\mathbb R^n$, then the hyperspace $\mathcal K^H$ (equipped with the Fell topology) is homeomorphic with $Q\times\mathbb R^n$.

In this bestiary, the most difficult subspaces of $\mathcal K^H$ to deal with are those that are neither open nor closed in $\mathcal K^H$ (see, e.g., \cite{Belegradek} and \cite{Hetman}). In this work, we are interested in one of those subspaces: the hyperspace of all $k$-dimensional closed convex subsets of $\mathbb R^{n}$  ($0\leq k\leq n$).

More precisely, if we denote by $\mathcal K_k^n$ and $\mathcal K_{k,b}^n$ the hyperspace of all $k$-dimensional closed convex subsets of $\mathbb R^n$ (equipped with the Attouch-Wets metric topology) and the hyperspace of all $k$-dimensional compact convex subsets of $\mathbb R^n$ (equipped with the Hausdorff metric topology), respectively, we will prove that both hyperspaces have a fiber bundle structure with  the Grassmann manifold $\Gr_k(n)$  as base (Theorem~\ref{t:main K_kn es haz fibrado}). As a corollary, we will prove that  $\mathcal K_k^n$ and 
$\mathcal K_{k,b}^n$ are $Q$-manifolds (corollaries~\ref{c:K_kn es Q manifold} and \ref{c:K_kbn es Q manifold}). In the particular case of $\mathcal K_{k,b}^n$ we also show that the fiber bundle structure has fiber homeomorphic with $\mathbb R^{\frac{k(k+1)+2n}{2}}\times Q$ (Corollary~\ref{c:K_kbn es Q manifold}).

Even if our main results are for the case when $H=\mathbb R^n$, we will also provide some results which are valid for the general case. 

 The paper is organized as follows. In Section~\ref{sec:prelim} we establish the notation we will use as well as some basic results related with the topology of the different families of $\mathcal K^H$. We also recall some classic results from the theory of $Q$-manifolds and convex geometry that will be used later. 
 One of our main tools will be the map $p:\mathcal K^H\to H$ that assigns to each closed convex set $A\in \mathcal K^H$, the point in $A$ closest to the origin. 
  In \cite{Sakai-Yang-2007}, the authors proved that the map
$p:\mathcal K^{H}\to  H$ is continuous (provided that $H=\mathbb R^{n}$ and $\mathcal K^{H}$ is equipped with the Fell topology). Unfortunately, there is a gap in their proof. Indeed, the reader can verify that the proof presented in \cite[Lemma 2]{Sakai-Yang-2007} only shows that the map which assigns to any element $A\in \mathcal K^n$ the value $\|p(A)\|$, is continuous. 
To fulfill this gap, we present in Lemma~\ref{l:continuidad funcion p} a correct proof that in fact generalizes the continuity of $p$ for the case where $H$ is any Hilbert space (not necessarily finite-dimensional) and $\mathcal K^H$ is endowed with the Attouch-Wets metric. Recall that in the finite-dimensional case, the Fell topology and the topology induced by the Attouch-Wets metric coincide in $\mathcal K^n$ (see Fact~\ref{f: AW=FELL}). 

The other main ingredient of this work is the Grassmann manifold $\Gr_k(H)$.  Namely, the family of all $k$-dimensional linear subspaces of $H$. These are very well-known objects which appear frequently in the differential realm and they are usually constructed as a quotient space of a manifold. However, the aim of this work is more synthetic and we are interested in seeing $\Gr_k(H)$ as hyperspaces (subspaces of $\mathcal K_k^H$).
This is why in Section~\ref{s:Grassmann} we will make a summary of all different (and equivalent) ways to equip $\Gr_k(H)$ with a hyperspace topology.
We will also explore the case of the family $\ag_k(H)$ consisting of all $k$-dimensional flats. 
We believe that some of the results presented in Section~\ref{s:Grassmann} are well-known folklores. However, we did not find formal proofs of them in the literature.  Since they are important for our main results, we decided to provide complete proofs for all of them. We also believe that those theorems can be of general interest.

In Section~\ref{s: fiber bundle of grasmanians} we prove several key lemmas that will be used later, and finally in Section~\ref{s:main} we prove our main results related to the topology of the hyperspaces $\mathcal K_k^n$ and $\mathcal K_{k,b}^n$.

\section{Preliminaries}
\label{sec:prelim}

We begin by introducing the notation and basic results that we will use throughout the work. The letter $H$ will always denote a real Hilbert space (however, some results are also valid for any normed linear space). When $H$ has finite dimension $n$, we will use the standard notation $\mathbb{R}^n.$ In any case, the inner product will be denoted by $\langle\cdot,\cdot\rangle.$ The corresponding norm and closed unit ball are denoted by $\|\cdot\|$ and $\mathbb{B},$ respectively. 
If $A\subset H$ is an arbitrary subset, the \textit{convex hull} of $A$ is denoted by $\cco(A)$. Similarly, we denote by $\Aff(A)$ the \textit{affine hull} of $A$. Namely,
$$\Aff(A):=\left\{\sum_{i=1}^m\lambda_ia_i: \lambda_i\in\mathbb R,\; a_i\in A, \;\sum_{i=1}^{m}\lambda_i=1, \; m\in\mathbb N \right\}.$$
A set $F\subset H$ is called a \textit{flat} iff $\Aff(F)=F$. 
Notice that $F$ is a flat if and only if $F-a$ is a vector subspace for every $a\in F$. 
We say that $F$ is a $k$-dimensional flat ($k\in\mathbb N)$ if $F-a$ is a $k$-dimensional vector subspace for any $a\in F$. Notice that in this case, $F$ is always closed in $H$.

The following easy remark will be used several times throughout this work.

\begin{remark}\label{r:distancia entre combinaciones convexas}
Let $a=\sum_{i=0}^k\lambda_ia_i$ and $b=\sum_{i=0}^k\lambda_ib_i$, where $\sum_{i=0}^k\lambda_i=1$,  $\lambda_i\in [0,1]$ and $a_i, b_i\in H$ for every $i=0,\dots, k$. If $\|a_i-b_i\|<r$ for each $i=0,\dots, k$, then $\|a-b\|<r$.
\end{remark}
\begin{proof}
    Simply observe that
    \begin{align*}
        \|a-b\|&=\left\|\sum_{i=0}^k\lambda_ia_i-\sum_{i=0}^k\lambda_ib_i\right\|
        =\left\|\sum_{i=0}^k\lambda_i(a_i-b_i)\right\|\\
        &\leq \sum_{i=0}^k\lambda_i\|a_i-b_i\|<\sum_{i=0}^k\lambda_i r=r.
    \end{align*}
\end{proof}

\subsection{Hyperspaces of convex sets}\label{ss:hyperspaces}

As mentioned in the introduction, we use the symbol $\mathcal{K}^H$ to denote the family of all nonempty closed and convex subsets of $H$. The family of all elements in $\mathcal{K}^H$ which are bounded will be denoted by $\mathcal{K}_b^H$.
We say that an element $A\in\mathcal K^H$ has dimension $k$ iff the flat $\Aff(A)$ has dimension $k$. We denote by $\mathcal K_k^H$ the set of all elements $A\in\mathcal{K}^H$ with dimension $k$.

Following this logic, for every $k\in \mathbb N$, we define the family
$$\mathcal{K}_{k,b}^H:=\mathcal{K}_{k}^H\cap \mathcal{K}_{b}^H.$$

If $H=\mathbb R^n$, we simply write $\mathcal{K}^n$, $\mathcal{K}_b^n$, $\mathcal{K}_k^n$, and $\mathcal{K}_{k,b}^n$, respectively.

 On $\mathcal{K}^H$ one can consider different  topologies  (see, e.g. \cite{Beer1993}). In this work, we are interested in the Attouch-Wets topology $\tau_{AW}.$  This topology is determined by the Attouch-Wets metric $d_{AW}$ which can be  defined for  any $A,B\in\mathcal{K}^H$ as
\begin{equation}
\label{eq:dAW}
d_{AW}(A,B):=\sup_{j\in\mathbb{N}}\left\{\min\left\{\frac{1}{j},\sup_{\|x\|<j}|d(x,A)-d(x,B)|\right\}\right\},
\end{equation}
This definition is the one used in \cite{SakaiYaguchi2006} and  it is equivalent to the Attouch-Wets metric defined in \cite[Definition 3.1.2]{Beer1993}.

It is important to point out that the Attouch-Wets metric and the well-known Hausdorff metric $d_H$ generate the same topology on $\mathcal K^H_b$ (see, e.g., \cite[Theorem 3.2]{SakaiYaguchi2006}).

Recall that the \textit{Hausdorff metric} between any  pair of nonempty  closed subsets  $A, B\subset H$ is defined by any of the following equivalent expressions (see, e.g. \cite[\S 3.2]{Beer1993})
\begin{align*}
d_H(A,B)&=\max\left\{\sup_{x\in A}d(x,B), \sup_{x\in B}d(x,A)\right\}\\
&=\text{inf}\left\{\lambda>0:A\subseteq B+\lambda\mathbb{B},\ B\subseteq A+\lambda\mathbb{B}\right\}\\
&=\sup_{x\in H}|d(x, A)-d(x,B)|.
\end{align*}

The following lemma will be used several times throughout the paper. We omit its proof because it is a direct consequence of (\ref{eq:dAW}).

\begin{lemma}\label{lem:Lemma-Ananda}
    Let $\varepsilon>0$ and $j\in\mathbb N$ be such that $\varepsilon\leq\frac{1}{j}$. 
    \begin{enumerate}[\rm(1)]
        \item If $d_{AW}(A,B)<\varepsilon$, then $$\sup_{\|x\|<j}|d(x,A)-d(x,B)|<\varepsilon.$$
        \item If  $\varepsilon\in (\frac{1}{j+1},\frac{1}{j}]$ and $\sup\limits_{\|x\|<j}|d(x,A)-d(x,B)|<\varepsilon$, then  $$d_{AW}(A,B)<\varepsilon.$$
    \end{enumerate}
\end{lemma}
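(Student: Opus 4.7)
The statement is essentially a bookkeeping translation between the metric $d_{AW}$ and the ``windowed'' uniform closeness $\sup_{\|x\|<j}|d(x,A)-d(x,B)|$, so my plan is to just unpack the definition \eqref{eq:dAW} and push inequalities around. The key facts I will lean on are the following two elementary observations about the expression $\min\{1/j,s\}$: it is non-decreasing in $s$, and if $s\geq 1/j$ then $\min\{1/j,s\}=1/j$.

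For part (1), I will start from $d_{AW}(A,B)<\varepsilon$ and apply the definition just at the single index $j$ to get
\[
\min\!\left\{\tfrac{1}{j},\;\sup_{\|x\|<j}|d(x,A)-d(x,B)|\right\}\leq d_{AW}(A,B)<\varepsilon.
\]
Since $\varepsilon\leq 1/j$, the left-hand minimum cannot be realized by $1/j$ (that would force $1/j<\varepsilon$, contradicting the hypothesis), so it must be realized by the supremum, giving the conclusion. A short case split (``if $s\geq 1/j$ then $\min=1/j\geq\varepsilon$'') formalizes this.

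For part (2), I will bound $d_{AW}(A,B)$ by splitting the outer supremum over $j'\in\mathbb N$ into two ranges. For $j'\leq j$, monotonicity of the inner supremum in the window radius gives
\[
\min\!\left\{\tfrac{1}{j'},\sup_{\|x\|<j'}|d(x,A)-d(x,B)|\right\}\leq \sup_{\|x\|<j}|d(x,A)-d(x,B)|<\varepsilon,
\]
so the sup over this range is bounded by $\sup_{\|x\|<j}|d(x,A)-d(x,B)|$, which is strictly below $\varepsilon$. For $j'\geq j+1$, the minimum is at most $1/j'\leq 1/(j+1)<\varepsilon$, by the assumption $\varepsilon>1/(j+1)$. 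Taking the maximum of these two bounds yields
\[
d_{AW}(A,B)\leq \max\!\left\{\sup_{\|x\|<j}|d(x,A)-d(x,B)|,\;\tfrac{1}{j+1}\right\}<\varepsilon.
\]

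There is no real obstacle here; the only subtlety is remembering that in part (2) one needs the hypothesis $\varepsilon>1/(j+1)$ (not merely $\varepsilon\leq 1/j$), because otherwise the tail $j'\geq j+1$ could contribute terms as large as $1/(j+1)\geq \varepsilon$. That is exactly why the author states the two parts asymmetrically, and it is the one place I would be careful about when writing out the proof.
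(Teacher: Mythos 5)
Your proof is correct; the paper itself omits the argument, stating only that the lemma is a direct consequence of the definition in (\ref{eq:dAW}), and your write-up is exactly that direct verification (evaluating the outer supremum at the index $j$ for part (1), and splitting the indices into $j'\leq j$ and $j'\geq j+1$ for part (2)). Your closing remark about why the hypothesis $\varepsilon>\frac{1}{j+1}$ is needed in part (2) is also on point.
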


We will also need the following technical lemma whose proof can be found in \cite[Lemma 3.9]{DonJuanJonardPerezLopezPoo}.

\begin{lemma}\label{l:distancia igual a distancia en interseccion}
For any closed subset $C\subset H$ and any positive scalar $j>0$, if $L>2j+d(0, C)$,  then 
$$d(x,C)=d\big(x,C\cap B(0, L) \big)\;\text{ for every }\;x\in B\left(0,j\right).$$
\end{lemma}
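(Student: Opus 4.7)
The plan is to establish the nontrivial direction $d(x,C\cap B(0,L))\le d(x,C)$ by observing that any point of $C$ that nearly realizes the infimum $d(x,C)$ is automatically forced to lie in $B(0,L)$. The reverse inequality $d(x,C)\le d(x,C\cap B(0,L))$ is immediate from the monotonicity of the infimum under the inclusion $C\cap B(0,L)\subseteq C$.

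First I would establish a crude a priori bound on $d(x,C)$. For every $c\in C$, the triangle inequality gives $\|x-c\|\le \|x\|+\|c\|$, so taking the infimum over $c\in C$ yields
\[
d(x,C)\le \|x\|+d(0,C)< j+d(0,C),
\]
where the strict inequality uses $x\in B(0,j)$.

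Next, fix any $\eta\in\bigl(0,\, L-2j-d(0,C)\bigr)$; this interval is nonempty by the assumption $L>2j+d(0,C)$. Choose $c_\eta\in C$ with $\|x-c_\eta\|<d(x,C)+\eta$. A second application of the triangle inequality together with the bound from the previous step gives
\[
\|c_\eta\|\le \|x\|+\|x-c_\eta\|< j+d(x,C)+\eta< 2j+d(0,C)+\eta< L.
\]
Hence $c_\eta\in C\cap B(0,L)$, so $d\bigl(x,C\cap B(0,L)\bigr)\le \|x-c_\eta\|<d(x,C)+\eta$. Letting $\eta\to 0^+$ produces $d(x,C\cap B(0,L))\le d(x,C)$, which combined with the trivial inequality finishes the argument.

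There is no real obstacle in this proof; it is a careful double application of the triangle inequality. The precise role of the hypothesis $L>2j+d(0,C)$ is exactly to absorb the two separate uses: one factor of $j$ bounds $d(x,C)$ above by $j+d(0,C)$, and the second factor of $j$ appears when bounding $\|c_\eta\|$ through $\|x\|+\|x-c_\eta\|$. The strict inequality in the hypothesis is what provides the slack needed to let $\eta$ shrink to $0$.
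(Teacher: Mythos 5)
Your proof is correct: the trivial inequality from $C\cap B(0,L)\subseteq C$ plus the two triangle-inequality estimates (bounding $d(x,C)$ by $j+d(0,C)$ and then forcing any near-minimizer $c_\eta$ into $B(0,L)$) is exactly the standard argument, and the strict inequalities guarantee $c_\eta$ lies in the open ball. The paper itself omits the proof and only cites \cite[Lemma 3.9]{DonJuanJonardPerezLopezPoo}, and your argument is the expected elementary one for that statement, so there is nothing to add.
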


If we restrict ourselves to the family of closed convex sets containing the origin, the Attouch-Wets distance can be approached by means of the Hausdorff metric, as the following lemma shows.

\begin{lemma}(\cite[Lemma 2.1]{Luisa-Nat})
\label{l:d_H-dAW}
Let $r>0$. If $z\in r\mathbb B$ and $A, K\in \mathcal K^H$ are two closed convex sets containing the origin of $H$, then the following equalities hold. 
\begin{enumerate}[\rm(1)]
\item  $d(z,A)=d(z, A\cap r\mathbb B)$.
\item $\sup\limits_{\|x\|<r}|d(x,A)-d(x,K)|=\sup\limits_{x\in r\mathbb B}|d(x,A)-d(x,K)|.$
\item $d_H\big(A\cap r\mathbb B,K\cap r\mathbb B\big)=\sup\limits_{x\in r\mathbb B}|d(x,A)-d(x,K)|$. 
\item $d_{AW}(A,K)=\sup\limits_{j\in\mathbb{N}}\left\{\min\left\{\frac{1}{j},d_H\big(A\cap j\mathbb B,K\cap j\mathbb B\big)\right\}\right\}.$  
\item For every integer $j\geq 1$ and every $\varepsilon \in \left(\frac{1}{j+1}, \frac{1}{j}\right]$,
$$d_{AW}(A,K)<\varepsilon\textit{ if and only if }d_H\big(A\cap j\mathbb B,K\cap j\mathbb B\big)<\varepsilon.$$
\end{enumerate}
\end{lemma}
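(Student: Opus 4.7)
The plan is to prove the five parts in order, since each one builds on the previous. I expect only part (1) to require real work; the remaining parts amount to rewriting suprema.

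For (1), the key observation is that because $0\in A$, the nearest point $p_A(z)\in A$ to $z$ is already forced to lie in $r\mathbb B$. I would use the variational characterization of the metric projection in a Hilbert space: $p:=p_A(z)$ satisfies $\langle z-p,a-p\rangle\leq 0$ for every $a\in A$, and taking $a=0$ yields $\|p\|^2\leq\langle z,p\rangle\leq\|z\|\|p\|$, so $\|p\|\leq\|z\|\leq r$. Hence $p\in A\cap r\mathbb B$ and $d(z,A\cap r\mathbb B)\leq\|z-p\|=d(z,A)$; the reverse inequality is trivial. A purely convexity-based alternative is available: for any $a\in A$ with $\|a\|>r$, the point $a':=(r/\|a\|)a$ lies in $A\cap r\mathbb B$ (using $0\in A$ and convexity), and an elementary analysis of $t\mapsto\|z-ta\|^2=\|z\|^2-2t\langle z,a\rangle+t^2\|a\|^2$ shows that this function is non-decreasing for $t\geq t^\ast:=r/\|a\|$, hence $\|z-a'\|\leq\|z-a\|$.

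For (2), the function $x\mapsto|d(x,A)-d(x,K)|$ is $2$-Lipschitz, so the supremum over the open ball $\{\|x\|<r\}$ agrees with that over $r\mathbb B$: given $x\in r\mathbb B$ with $\|x\|=r$, the sequence $x_n:=(1-1/n)x$ lies in the open ball and satisfies $|d(x_n,A)-d(x_n,K)|\to|d(x,A)-d(x,K)|$. For (3), I would first use (1) on both $A$ and $K$ to rewrite $\sup_{x\in r\mathbb B}|d(x,A)-d(x,K)|=\sup_{x\in r\mathbb B}|d(x,A')-d(x,K')|$ where $A':=A\cap r\mathbb B$ and $K':=K\cap r\mathbb B$. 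The bound $\sup_{x\in r\mathbb B}|\cdot|\leq d_H(A',K')$ is the classical fact $|d(x,A')-d(x,K')|\leq d_H(A',K')$ for all $x\in H$, and the reverse bound follows from $d_H(A',K')=\max\{\sup_{a\in A'}d(a,K'),\sup_{k\in K'}d(k,A')\}$ together with the identities $d(a,K')=|d(a,A')-d(a,K')|$ for $a\in A'$ and symmetrically for $k\in K'$.

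Part (4) is then just a substitution into the definition \eqref{eq:dAW}: apply (2) followed by (3) to the inner supremum for each $j\in\mathbb N$. Finally, (5) follows from Lemma~\ref{lem:Lemma-Ananda} combined with the equality established in parts (2)--(3): the forward direction uses part (1) of that lemma (together with the rewriting of the inner supremum as a Hausdorff distance of truncations), and the reverse uses part (2). As anticipated, the only substantive step is (1); the rest is bookkeeping verifying that the \emph{closest-point-stays-in-$r\mathbb B$} phenomenon propagates cleanly through the Hausdorff and Attouch--Wets distances.
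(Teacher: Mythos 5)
Your proof is correct. Note that the paper itself gives no proof of this lemma: it is quoted verbatim from \cite[Lemma 2.1]{Luisa-Nat}, so there is no internal argument to compare against. Your treatment of the only substantive step, part (1), is sound on both routes: the variational inequality $\langle z-\pi_A(z),a-\pi_A(z)\rangle\leq 0$ with $a=0$ does force $\|\pi_A(z)\|\leq\|z\|\leq r$, and the scaling argument with $a'=(r/\|a\|)a$ works because $\langle z,a\rangle\leq r\|a\|$ guarantees that $t\mapsto\|z-ta\|^2$ is non-decreasing beyond $t^\ast=r/\|a\|$. Parts (2)--(5) are indeed bookkeeping: continuity of $x\mapsto|d(x,A)-d(x,K)|$, the standard identification of $d_H$ with $\sup_x|d(x,\cdot)-d(x,\cdot)|$ restricted via (1), and Lemma~\ref{lem:Lemma-Ananda}, exactly as you use them.
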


Let $X$ be a topological space, and let $\mathcal H$ be a family of closed subsets. \textit{The Fell topology} on $\mathcal{H}$, denoted by $\tau_F$, is the one generated by the sets $$U_{\mathcal H}^-:=\{A\in\mathcal{H} : A\cap U\neq\emptyset\}\;\text{ and }\;(X\setminus C)_{\mathcal H}^+:=\{A\in\mathcal{H}:A\subset X\setminus C\},$$ where $U\subset X$ is open and $C\subset X$ is compact. 

If it is possible to deduce from the context who the family $\mathcal H$ is, we simply write $U^-$ and $(X\setminus C)^+$.

Throughout this work we will use the following useful fact (see, for instance, \cite[Exercise 5.1.10(b)]{Beer1993} or \cite[Remark 1]{Sakai-Yang-2007}). 

\begin{fact}\label{f: AW=FELL}
    In  $\mathcal{K}^n$,  the Attouch-Wets topology and the Fell topology are the same.
\end{fact}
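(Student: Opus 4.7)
The plan is to verify that both topologies induce the same convergent sequences in $\mathcal K^n$. This suffices because $\tau_{AW}$ is metric by definition and $\tau_F$ on the closed subsets of the second countable locally compact Hausdorff space $\mathbb R^n$ is also metrizable. Moreover, by Lemma~\ref{lem:Lemma-Ananda}, $d_{AW}(A_m,A)\to 0$ is equivalent to the condition that $\sup_{\|x\|<j}|d(x,A_m)-d(x,A)|\to 0$ for every $j\in\mathbb N$, i.e., to uniform convergence of the distance functions on every bounded ball. I will exploit this characterization throughout.

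For the implication $\tau_{AW}\Rightarrow\tau_F$, I would fix a basic Fell neighborhood $V=U_1^-\cap\cdots\cap U_\ell^-\cap(\mathbb R^n\setminus C)^+$ of $A$. I pick $x_i\in A\cap U_i$ and $r_i>0$ with $\overline{B(x_i,r_i)}\subset U_i$, and observe that $d(C,A)>0$ because $C$ is compact and disjoint from the closed set $A$. Choose $j_0\in\mathbb N$ large enough that $C\cup\{x_1,\dots,x_\ell\}\subset B(0,j_0)$. For $m$ large, the uniform convergence of $d(\cdot,A_m)$ to $d(\cdot,A)$ on $B(0,j_0)$ forces $d(x_i,A_m)<r_i$ (so $A_m\cap U_i\neq\emptyset$) and $d(x,A_m)>0$ for every $x\in C$ (so $A_m\cap C=\emptyset$), giving $A_m\in V$ eventually.

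For the converse $\tau_F\Rightarrow\tau_{AW}$, I would first upgrade Fell convergence to pointwise convergence of the distance functions: if $d(x,A)<r$ then $A\in B(x,r)^-$, so $A_m\in B(x,r)^-$ eventually and $d(x,A_m)<r$; while if $d(x,A)>r$ then $A\in (\mathbb R^n\setminus \overline{B(x,r)})^+$, so $A_m$ also misses $\overline{B(x,r)}$ eventually and $d(x,A_m)>r$. Hence $d(x,A_m)\to d(x,A)$ for every $x\in\mathbb R^n$. Since the family $\{d(\cdot,A_m)\}$ is uniformly $1$-Lipschitz, a standard $\varepsilon/3$ finite-cover argument (or Arzel\`a--Ascoli) then promotes this pointwise convergence to uniform convergence on each compact ball $\overline{B(0,j)}$, which combined with the characterization from the first paragraph yields $d_{AW}(A_m,A)\to 0$. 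The main obstacle is precisely this upgrade from pointwise to uniform convergence on balls; it is the step that genuinely uses the local compactness of $\mathbb R^n$ and explains why the equivalence of $\tau_F$ and $\tau_{AW}$ is a finite-dimensional phenomenon. Note also that convexity plays no role here, so the same argument works for arbitrary closed subsets of $\mathbb R^n$.
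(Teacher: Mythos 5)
Your argument is correct, but note that the paper does not actually prove this Fact: it simply cites \cite[Exercise 5.1.10(b)]{Beer1993} and \cite[Remark 1]{Sakai-Yang-2007}. So what you have written is a genuine self-contained proof where the paper relies on a reference, and it follows the standard route: Attouch--Wets convergence is uniform convergence of the distance functionals $d(\cdot,A_m)$ on bounded balls (your use of Lemma~\ref{lem:Lemma-Ananda} is exactly right), Fell convergence gives pointwise convergence of these functionals via the sets $B(x,r)^-$ and $(\mathbb R^n\setminus\overline{B(x,r)})^+$ (using that closed balls in $\mathbb R^n$ are compact, which is indeed where finite-dimensionality enters), and the $1$-Lipschitz equicontinuity upgrades pointwise to uniform convergence on compact balls by the $\varepsilon/3$ trick; your remark that convexity is irrelevant is also accurate. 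The only soft spot is the opening reduction to sequences: it requires knowing that the Fell topology on the closed subsets of $\mathbb R^n$ is metrizable (or at least sequential), which is itself a nontrivial cited fact (local compactness plus second countability give a compact metrizable Fell hyperspace). You can avoid this extra input entirely by observing that both of your implications use only finitely many conditions at a time, so they work verbatim for nets: every $\tau_{AW}$-convergent net is $\tau_F$-convergent to the same limit and vice versa, which already forces the two topologies to coincide without any appeal to metrizability of $\tau_F$. (Also, the parenthetical appeal to Arzel\`a--Ascoli is unnecessary and slightly weaker than what you need; the finite-cover argument you describe is the right one.)
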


Following the notation above, the topology in $\mathcal H$ generated by all sets $U^-$  (with $U\subset X$  open) will be called \textit{the lower Vietoris topology} in $\mathcal H$ and will be denoted by $\tau_{LV}$. This topology will be extensively used later in the paper.


Given a family of closed convex sets $\mathcal H\subset \mathcal K^H$, 
for every $\varepsilon>0$ and $a_1,\dots,a_m\in H$, we will use the following notation:
$$[a_1,\dots,a_m;\varepsilon]_{\mathcal H}:=\bigcap_{i=1}^m B(a_i,\varepsilon)_{\mathcal H}^-,$$
where $B(a,\varepsilon)$ stands for the open ball in $H$ centered at $a$ with radius $\varepsilon$.
Notice that 
$$[a_1,\dots,a_m;\varepsilon]_{\mathcal H}=\{A\in \mathcal H: A\cap B(a_i,\varepsilon)\neq\emptyset\text{ for all }i=1,\dots, m\}.$$

As usual, if we can infer from the context who the family $\mathcal H$ is, we simply write $[a_1,\dots,a_m;\varepsilon]$.

It is easy to prove that the family 
$$\mathcal B:=\{[a_1,\dots,a_m;\varepsilon]_{\mathcal H}:a_1,\dots,a_m\in H, \varepsilon>0, m\in\mathbb N\}$$
is a base for the lower Vietoris topology in $\mathcal H$.

\begin{proposition}\label{p: basic properties}  
The following operations are always continuous.
\begin{enumerate}[\rm(1)]
    \item The addition $(\mathcal K^H, \tau_{LV})\times (\mathcal K^H, \tau_{LV})\to(\mathcal K^H, \tau_{LV})$ given by $$(A, B)\to A+B:=\{a+b: a\in A, b\in B\}.$$
    \item The addition $(\mathcal K^n, \tau_{F})\times \mathbb R^{n}\to(\mathcal K^n, \tau_{F})$ given by
    $$(A, \omega)\to A+\omega:=\{a+\omega: a\in A\}.$$
    
\end{enumerate}
\end{proposition}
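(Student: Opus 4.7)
For part (1), I will work with the explicit base $\mathcal{B}$ of the lower Vietoris topology described just before the proposition. Fix $(A_0,B_0)$ and a basic neighborhood $[c_1,\ldots,c_m;\varepsilon]$ of $A_0+B_0$. By definition, for each $i$ there exist $a_i\in A_0$ and $b_i\in B_0$ with $\|a_i+b_i-c_i\|<\varepsilon$, so I can choose $\delta>0$ small enough that $\|a_i+b_i-c_i\|+2\delta<\varepsilon$ for every $i$. Then $[a_1,\ldots,a_m;\delta]\times[b_1,\ldots,b_m;\delta]$ is an open neighborhood of $(A_0,B_0)$, and for any $(A,B)$ in it I pick $a_i'\in A\cap B(a_i,\delta)$ and $b_i'\in B\cap B(b_i,\delta)$; a single application of the triangle inequality (or Remark~\ref{r:distancia entre combinaciones convexas} with two summands) gives $\|a_i'+b_i'-c_i\|<\varepsilon$, so $A+B\in[c_1,\ldots,c_m;\varepsilon]$. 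Note that for the argument to go through it is irrelevant whether $A+B$ happens to be closed, since membership in $U^-$ for an open $U$ depends only on meeting $U$ and is invariant under closure.

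For part (2), I will check continuity on a subbasis of $\tau_F$, namely $\{U^-:U\subset\mathbb R^n\text{ open}\}\cup\{(\mathbb R^n\setminus C)^+:C\subset\mathbb R^n\text{ compact}\}$. The preimage of $U^-$ under the translation map is $\{(A,\omega):(A+\omega)\cap U\neq\emptyset\}$, and its openness is essentially the single-point version of the argument above: if $a_0\in A_0$ and $a_0+\omega_0\in U$, choose $\eta>0$ with $B(a_0+\omega_0,2\eta)\subset U$; then $B(a_0,\eta)^-\times B(\omega_0,\eta)$ is a neighborhood of $(A_0,\omega_0)$ that maps into $U^-$ by the triangle inequality.

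The main obstacle is the subbasis element $(\mathbb R^n\setminus C)^+$ with $C$ compact, whose preimage is $\{(A,\omega):(A+\omega)\cap C=\emptyset\}$. Take $(A_0,\omega_0)$ with $(A_0+\omega_0)\cap C=\emptyset$. Because $A_0+\omega_0$ is closed, $C$ is compact, and they are disjoint, their distance $\delta:=d(A_0+\omega_0,C)$ is strictly positive. Setting $r:=\delta/2$, I form the compact set
\[
K:=(C+r\mathbb B)-\omega_0.
\]
Note $A_0\cap K=\emptyset$: otherwise some $a\in A_0$ satisfies $a+\omega_0\in C+r\mathbb B$, giving $d(a+\omega_0,C)\leq r<\delta$, contradicting $d(A_0+\omega_0,C)=\delta$. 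Hence $A_0\in(\mathbb R^n\setminus K)^+$, and $(\mathbb R^n\setminus K)^+\times B(\omega_0,r)$ is a Fell-open neighborhood of $(A_0,\omega_0)$. For $(A,\omega)$ in this neighborhood, if some $a\in A$ satisfied $a+\omega\in C$, then $a+\omega_0=(a+\omega)+(\omega_0-\omega)\in C+r\mathbb B$, forcing $a\in K$ and contradicting $A\cap K=\emptyset$. Thus $(A+\omega)\cap C=\emptyset$, completing the proof. The nontrivial ingredient is that compactness of $C$ is used twice: once to guarantee $\delta>0$, and once to ensure that the enlargement $K$ is still compact so that it defines a legitimate Fell neighborhood.
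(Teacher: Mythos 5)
Your proposal is correct and follows essentially the same route as the paper's proof: for the lower Vietoris/hit sets you verify openness by producing witness points and shrinking radii (the paper phrases this via continuity of addition in $H$ on subbasic sets $U^-$, you via metric balls on the base $[c_1,\dots,c_m;\varepsilon]$, which is the same idea), and for the Fell subbasic set $(\mathbb R^n\setminus C)^+$ you use exactly the paper's device of the positive distance between the closed set and the compact set, a thickened compact set ($K=(C+r\mathbb B)-\omega_0$ versus the paper's $D=(C-\omega)+\delta\mathbb B$) and a product neighborhood leading to a contradiction. No gaps; your closing remark that membership in $U^-$ is insensitive to taking closures is a sensible aside rather than a deviation.
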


\begin{proof}
  Let $U\subset H$ be an open set. If $(A+B)\in U^{-}$,   we can find points $a\in A$ and $b\in B$ such that $a+b\in U$. By the continuity of the sum in $H$, there exist neighborhoods $V$ and $W$ of $a$ and $b$, respectively, such that
  $$x+y\in U\; \text{ for every }x\in V\text{ and } y\in W.$$
Therefore, if $(A', B')\in V^{-}\times W^{-}$, it follows immediately that $(A'+B')\in U^{-}$.
  This proves (1) and half of (2). 

  To finish the proof, let $C\subset \mathbb R^{n}$ be a compact subset,  $A\in \mathcal K^H$  and $\omega\in \mathbb R^{n}$ be such that $(A+\omega)\in (\mathbb R^{n}\setminus C)^{+}$.
  Then $A\subset \mathbb R^{n}\setminus (C-\omega)$ and since $C-\omega$ is compact and $A$ is closed, we can find $\eta>0$ such that
  $$\inf\{d(a,c-\omega):a\in A,\;c\in C\}>\eta.$$
  Let $\delta=\eta/3$ and observe that $D:=(C-\omega)+\delta\mathbb B^n$ is a compact subset of $\mathbb R^{n}$ such that $A\in \mathcal O:=(\mathbb R^{n}\setminus D)^{+}$. Hence, for every $(B,v)\in \mathcal O\times B(\omega, \delta)$, if $(B+v)\cap C\neq\emptyset$, we can find points $b\in B$ and $c\in C$ such that $b+v=c$. Therefore 
  $$b=(c-\omega)+(\omega-v)\in (C-\omega)+B(0,\delta)\subset D$$
  which directly implies that $B\cap D\neq\emptyset$, a contradiction. Thus, $(B+v)\in (\mathbb R^{n}\setminus C)^{+}$, and now the proof of (2) is complete.

\end{proof}

\subsection{Affinely independent families}

Let us recall that a set of points 
 $\{x_0, x_1, \dots, x_k \}\subset H$ is called an \textit{affinely independent set} (also called \textit{geometrically independent set} )   iff for every real scalar values $\alpha_0,  \dots, \alpha_k$ such that 
$$\sum_{i=0}^k \alpha_i x_i=0 \; \text{and} \; \sum_{i=0}^k \alpha_i=0,$$
it follows that $\alpha_0= \alpha_1= \cdots=\alpha_k=0$. Equivalently, $\{x_0, x_1, \dots, x_k \}$ is affinely independent if and only if $\{x_1-x_0,  \dots, x_k-x_0 \}$ is linearly independent (see, e.g., \cite[Theorem 3.1.3]{Van Mill}).

Observe that $A\subset H$ is a $k$-dimensional flat if and only if there exists an affinely independent set $\{x_0,  \dots, x_k \}\subset A$ such that  

$$A=\Aff(\{x_0, \dots, x_k\}).$$

If $\{a_0,\dots, a_k\}$ is an affinely independent set, every element $x$ in the convex hull $\cco\{a_0,\dots ,a_k\}$ can be uniquely written as a convex sum of the points $a_0,\dots ,a_k$. This allows us to divide the simplex $\cco\{a_0,\dots ,a_k\}$ into two disjoint sets: the relative interior and the relative border of $\cco\{a_0,\dots ,a_k\}$.  Recall that  the \textit{relative interior}  (also call \textit{radial interior}) of $\cco \{a_{0},\dots ,a_k\}$  is defined as the set 

$$\cco^\circ \{a_{0},\dots ,a_k\}:=\left\{\sum_{i=0}^{k}\lambda_i a_i:\sum_{i=0}^{k}\lambda_i =1,\;  \lambda_i\in (0,1)\;\forall \;i=0,\dots, k\right\},$$
while the \textit{relative border} is the set $\cco \{a_{0},\dots ,a_k\}\setminus \cco^\circ \{a_{0},\dots ,a_k\}$.

The following definition is the affine version of a so-called linearly independent family of open sets (\cite[Page 43]{Resende-Santos}).

\begin{definition}
Let $U_1,\dots, U_k$ be a finite collection of open sets in $H$. We say that
$\{U_1,\dots, U_k\}$  is an affinely independent family of open sets (or an affinely independent family, for simplicity) if for every choice of points $u_i\in U_i$, the set $\{u_1,  \dots, u_k\}$ is an affinely independent set. 
\end{definition}

In the previous definition, if we replace the word ``affinely'' by ``linearly'', we obtain the definition of a \textit{linearly independent family of open sets}.

\begin{lemma}(cf. \cite[Lema 7.1]{Resende-Santos})\label{l:fam afin indep}
Let $\{a_0,\dots, a_k\}\subset H$ be an affinely independent set. Then there exists a positive number $\delta$ such that the collection $\{B(a_i,\delta)\}_{i=0}^k$ is an affinely independent family. 
\end{lemma}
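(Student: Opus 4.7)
The plan is to translate affine independence into linear independence via the standard criterion recalled just before the statement, and then to prove a quantitative perturbation lemma for linearly independent vectors. Recall that $\{b_0,\dots,b_k\}$ is affinely independent if and only if $\{b_1-b_0,\dots,b_k-b_0\}$ is linearly independent. Thus it suffices to produce $\delta>0$ such that whenever $b_i\in B(a_i,\delta)$ for $i=0,\dots,k$, the vectors $b_1-b_0,\dots,b_k-b_0$ are linearly independent in $H$.

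For the quantitative step, I would write $b_i=a_i+\epsilon_i$ with $\|\epsilon_i\|<\delta$, so that
$$b_i-b_0=(a_i-a_0)+(\epsilon_i-\epsilon_0),\qquad i=1,\dots,k.$$
Since $\{a_1-a_0,\dots,a_k-a_0\}$ is linearly independent by hypothesis, the continuous map
$$\varphi:\mathbb R^k\to\mathbb R,\qquad \varphi(\alpha_1,\dots,\alpha_k):=\Bigl\|\sum_{i=1}^k\alpha_i(a_i-a_0)\Bigr\|$$
is strictly positive away from the origin and therefore attains a positive minimum $c>0$ on the compact sphere $S=\{\alpha\in\mathbb R^k:\max_i|\alpha_i|=1\}$. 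By homogeneity,
$$\Bigl\|\sum_{i=1}^k\alpha_i(a_i-a_0)\Bigr\|\;\geq\;c\max_{1\leq i\leq k}|\alpha_i|\qquad\text{for every }\alpha\in\mathbb R^k.$$

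Now I would set $\delta:=c/(4k)$. Then, for any coefficients $\alpha_1,\dots,\alpha_k$, the triangle inequality gives
$$\Bigl\|\sum_{i=1}^k\alpha_i(\epsilon_i-\epsilon_0)\Bigr\|\;\leq\;2\delta\sum_{i=1}^k|\alpha_i|\;\leq\;2k\delta\max_i|\alpha_i|\;=\;\tfrac{c}{2}\max_i|\alpha_i|.$$
Hence if $\sum_{i=1}^k\alpha_i(b_i-b_0)=0$, then $\sum_i\alpha_i(a_i-a_0)=-\sum_i\alpha_i(\epsilon_i-\epsilon_0)$, which combined with the two displayed inequalities forces $c\max_i|\alpha_i|\leq (c/2)\max_i|\alpha_i|$, and therefore $\alpha_1=\dots=\alpha_k=0$. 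This proves that $\{b_1-b_0,\dots,b_k-b_0\}$ is linearly independent, and the criterion recalled above completes the proof.

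I do not foresee a real obstacle: the argument reduces to finding the lower bound $c$ on $\varphi$, which is a standard compactness/continuity statement on $\mathbb R^k$. A mild point worth emphasizing is that although $H$ may be infinite-dimensional, the whole estimate takes place in the finite-dimensional coefficient space $\mathbb R^k$, so no special care about the geometry of $H$ is needed beyond the norm inequalities used above.
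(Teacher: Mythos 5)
Your proof is correct, and it takes a genuinely different route from the paper's. The paper argues by contradiction: assuming no $\delta$ works, it produces points $x_i(m)\in B(a_i,1/m)$ with nontrivial affine dependences, normalizes the coefficient vectors to the sphere $\mathbb S^k\subset\mathbb R^{k+1}$, extracts a convergent subsequence, and passes to the limit to get a nontrivial affine dependence among $a_0,\dots,a_k$. You instead reduce affine independence to linear independence of the differences $b_i-b_0$ (a criterion the paper itself records) and prove a quantitative perturbation estimate: compactness of the $\ell^\infty$-sphere in the coefficient space $\mathbb R^k$ gives the lower bound $\bigl\|\sum_{i=1}^k\alpha_i(a_i-a_0)\bigr\|\geq c\max_i|\alpha_i|$, and then the explicit choice $\delta=c/(4k)$ works, since any linear dependence among the $b_i-b_0$ would force $c\max_i|\alpha_i|\leq(c/2)\max_i|\alpha_i|$ and hence $\alpha=0$. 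Both arguments ultimately rest on compactness in a finite-dimensional coefficient space and are equally valid for infinite-dimensional $H$; the trade-off is that yours is direct rather than by contradiction and yields an explicit modulus $\delta$ depending only on the configuration $a_0,\dots,a_k$, whereas the paper's works with the affine dependence relations themselves (coefficients summing to zero), avoiding the reduction to difference vectors and the norm-equivalence estimate, at the cost of being purely existential about $\delta$.
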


\begin{proof}
Let us assume that such $\delta$ does not exist. Thus, for each $i=1,\dots, k$ we can find a sequence $(x_i(m))_{m\in\mathbb N}\subset H$ such that 
\begin{equation}\label{eq. Condicion x_i}
    x_i(m)\in B\left(a_i, \frac{1}{m}\right )
\end{equation}
and the set $\{x_0(m),\dots, x_k(m)\}$ is not affinely independent for any $m\in\mathbb N$. This means that we can find, for each $i=0,\dots, k$, a real sequence $(\lambda_i(m))_{m\in\mathbb N}$ with the following properties 
\begin{itemize}
\item[(F1)] $\sum_{i=0}^k\lambda_i(m)x_i(m)=0$,
    \item[(F2)] $\sum_{i=0}^k\lambda_i(m)=0$,
    \item[(F3)] $\lambda (m):=(\lambda_0 (m), \lambda_1(m),\dots, \lambda_k(m))\in \mathbb R^{k+1}\setminus \{0\}$.
\end{itemize}

Notice that the choice of each $x_i(m)$ (for $i=1,\dots, k$ and $m\in\mathbb N$) implies that $$\lim\limits_{m\to\infty} (x_0(m), \dots, x_k(m))=(a_0,\dots, a_k)\in H^{k+1}.$$
Furthermore, 
property-(F3)  guarantees that $\Vert \lambda(m) \Vert>0$ and therefore 
$$\alpha(m):=\big(\alpha_0(m),\dots, \alpha_k(m)\big)\in\mathbb S^{k}\subset\mathbb R^{k+1},$$
where 
$$\alpha_i(m)=\frac{\lambda_i(m)}{\Vert \lambda(m) \Vert},$$
for each $i=0,\dots, k$ and $m\in\mathbb N$. Also, by property-(F2), each term of the sequence $\big(\alpha(m)\big)_{m\in\mathbb N}$ satisfies that
\begin{equation}\label{eq:suma cero}
    \sum_{i=0}^k \alpha_i(m)=0. 
\end{equation}
Since $\mathbb{S}^{k}$ is compact, there exists a subsequence of $\big(\alpha(m)\big)_{m\in\mathbb N}$ that converges to a certain element $y=(y_0,\dots, y_k)\in\mathbb S^k$.
Using the fact that the map $\mathbb R^{k+1}\to\mathbb R$ given  by 
$$(x_0,\dots,x_k)\to\sum_{i=0}^k x_i$$ is continuous, we infer from equality~(\ref{eq:suma cero})  that $\sum_{i=0}^ky_i=0$.
Finally, since the map $\mathbb R^{k+1}\to H$ defined as 
$$(x_0,\dots,x_k)\to\sum_{i=0}^k x_ia_i$$ is continuous, we conclude from property (F1) that $\sum_{i=0}^k y_i a_i=0.$ However, this is a contradiction since $\{a_0, \dots, a_k\}$ is affinely independent and $(y_0,\dots, y_k)\in \mathbb S^{k}$ is not the zero vector. 
\end{proof}

\subsection{Q-manifolds}

Recall that the \textit{Hilbert cube} is the topological product $$Q:=\prod_{i=1}^{\infty}[-1,1].$$  A \textit{Hilbert cube manifold} ( or simply a $Q$-\textit{manifold}) is a separable metrizable space that admits an open cover made of sets homeomorphic to open subsets of $Q.$ 

The following remark can be easily deduced from the definition of a $Q$-manifold.

\begin{remark}\label{r: abiertos en Q manifolds}
    Let $M$ be a separable metrizable space. 
    \begin{enumerate}
        \item If $M$ is a $Q$-manifold, then every nonempty open subset of $M$ is a $Q$-manifold. 
        \item If every point $x\in M$ has an open neighborhood homeomorphic with a $Q$-manifold, then $M$ is a $Q$-manifold.        
     \end{enumerate}
\end{remark}

Let us recall the following result from the theory of $Q$-manifolds that will be used later in the paper. 

\begin{theorem}\label{thm: X times Q}(\cite[Theorem 44.1]{Chapmanbook})
If $X$ is a $\mathrm{ANR}$ then $X\times Q$
is a $Q$-manifold. 
\end{theorem}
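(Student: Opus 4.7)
The statement is the classical $Q$-stability theorem of R.~D.~Edwards, which is genuinely deep, so I would not attempt a from-scratch proof but instead follow the historical route. In the $Q$-manifold framework this result is understood for locally compact separable metric ANRs (since $Q$-manifolds are locally compact), and the plan breaks into three layers: a local reduction, a reduction to a polyhedral model, and an application of Chapman's triangulation theorem.

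First I would reduce to the compact case. Since being a $Q$-manifold is a local property (Remark~\ref{r: abiertos en Q manifolds}(2)) and so is being an ANR, it suffices to cover $X\times Q$ by open sets of the prescribed form. Local compactness of $X$ together with ANR-ness allows one to choose, for each $x\in X$, an open neighborhood $U$ of $x$ in $X$ whose closure $K=\overline{U}$ is a compact ANR. If one can show that $K\times Q$ is a $Q$-manifold, then $U\times Q$ is an open subset of $K\times Q$ and hence a $Q$-manifold by Remark~\ref{r: abiertos en Q manifolds}(1); collecting these gives an open cover of $X\times Q$ by $Q$-manifolds, and a second application of Remark~\ref{r: abiertos en Q manifolds}(2) finishes the job.

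So the entire problem reduces to the compact ANR case. Here I would invoke two heavy guns. West's theorem says that any compact ANR $K$ has the homotopy type of a finite polyhedron $P$; moreover, $K\times Q$ and $P\times Q$ are simple-homotopy equivalent. For a finite polyhedron $P$, the product $P\times Q$ is visibly a compact $Q$-manifold: using the simplicial structure, each open star $\mathrm{st}(v)\times Q$ is an open cone on a compact polyhedron crossed with $Q$, which one can identify with an open subset of $Q$ via standard cone-decomposition tricks for the Hilbert cube. Chapman's triangulation theorem then upgrades the simple-homotopy equivalence between $K\times Q$ and $P\times Q$ to a genuine homeomorphism, so $K\times Q\cong P\times Q$ is a compact $Q$-manifold.

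The main obstacle, and the reason the theorem is deep, is the passage from homotopy information on $K$ to a true $Q$-coordinate structure on $K\times Q$: one cannot just exploit the neighborhood retraction $r\colon U\to K$ (with $K\hookrightarrow U\subset Q$) by the naive product $(u,q)\mapsto (r(u),q)$, because this only shows $K\times Q$ is a retract of a $Q$-manifold, not that it is one. Bridging this gap is exactly the content of Chapman's triangulation theorem (and, implicitly, of Edwards' and West's work). Given the depth of those results, I would simply cite them and focus the write-up on the local/compact reduction, which is the only part that actually uses the hypotheses in a lightweight way and which is the piece the rest of the paper will actually need.
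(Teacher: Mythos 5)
The paper offers no argument for this statement at all: it is quoted verbatim from \cite[Theorem 44.1]{Chapmanbook}, i.e.\ it is Edwards' theorem, so the only issue is whether your proposed reconstruction is sound. It is not, because the decisive step is circular. Chapman's triangulation theorem, and his result that a (simple) homotopy equivalence between compact $Q$-manifolds is homotopic to a homeomorphism, are statements about spaces \emph{already known} to be $Q$-manifolds; to apply either of them to $K\times Q$ you must already know that $K\times Q$ is a $Q$-manifold, which is precisely what is to be proved. So they cannot ``upgrade'' a (simple) homotopy equivalence $K\times Q\simeq P\times Q$ to a homeomorphism. The gap you yourself point out --- passing from homotopy data on $K$ to an actual local $Q$-structure on $K\times Q$ --- is bridged not by the triangulation theorem but by Edwards' theorem itself, whose proof runs through cell-like maps (fine homotopy equivalences) from $Q$-manifolds onto ANRs and the fact that such maps become near-homeomorphisms after crossing with $Q$. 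Historically the implication goes the opposite way from your sketch: Edwards' theorem plus Chapman's triangulation theorem \emph{yields} West's finiteness theorem, not conversely, and West's theorem (being purely homotopy-theoretic information about $K$) cannot by itself produce the local structure.

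Two further weak points. The claim that $K\times Q$ and $P\times Q$ are simple-homotopy equivalent for a finite polyhedron $P$ merely homotopy equivalent to $K$ is unjustified: homotopy equivalent finite polyhedra need not be simple-homotopy equivalent (lens spaces $L(7,1)$, $L(7,2)$), and correspondingly $L\times Q$ and $L'\times Q$ can be homotopy equivalent compact $Q$-manifolds that are not homeomorphic; even defining the simple homotopy type of a compact ANR, so as to choose $P$ in the correct simple type, already relies on Edwards' theorem. Also, in your reduction to the compact case, the existence of an open $U\ni x$ whose closure is a compact ANR does not follow just from local compactness and ANR-ness of $X$ (closures of open subsets of ANRs need not be ANRs), so that step would need a different formulation --- Chapman works directly with locally compact separable ANRs. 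Since the paper simply cites Chapman, the correct move here is to cite \cite[Theorem 44.1]{Chapmanbook} (Edwards) outright rather than attempt a derivation from West's theorem.
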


In \cite[Main Theorem]{Sakai-Yang-2007} it was proved that the hyperspace $(\mathcal K^n,\tau_F)$ is a $Q$-manifold homeomorphic to $\mathbb R^{n}\times Q$.
In particular, this implies that every subspace of $\mathcal K^n$ is separable.

As far as we know, the topology structure of the hyperspace $\mathcal K^n_n$  (equipped with the Fell topology) is unknown. However, we have the following.

\begin{proposition}\label{p: topologia de Knn}
 For any $n\geq 2$, the following holds
    \begin{enumerate}[\rm(1)]
        \item $(\mathcal K^n_n, \tau_{LV})$ is open in $(\mathcal K^n, \tau_{LV})$.
        \item $(\mathcal K^n_n, \tau_{F})$ is open in $(\mathcal K^n, \tau_{F})$.
        \item $(\mathcal K^n_n, \tau_{F})$ is a $Q$-manifold.
    \end{enumerate}  
\end{proposition}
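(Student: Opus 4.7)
The plan is to prove (1) directly using the affinely independent family machinery from Lemma~\ref{l:fam afin indep}, and then derive (2) and (3) as easy consequences.

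For (1), fix $A\in\mathcal K^n_n$. Since $\dim\Aff(A)=n$, the set $A$ contains $n+1$ affinely independent points $a_0,\dots,a_n$. By Lemma~\ref{l:fam afin indep}, there is $\delta>0$ such that $\{B(a_i,\delta)\}_{i=0}^n$ is an affinely independent family of open sets. I claim that the basic $\tau_{LV}$-neighborhood
\[
[a_0,\dots,a_n;\delta]_{\mathcal K^n}\;=\;\{B\in\mathcal K^n : B\cap B(a_i,\delta)\neq\emptyset\ \forall\, i=0,\dots,n\}
\]
of $A$ is contained in $\mathcal K^n_n$. Indeed, given any $B$ in this neighborhood, one picks $b_i\in B\cap B(a_i,\delta)$ for each $i$; the family being affinely independent forces $\{b_0,\dots,b_n\}$ to be affinely independent, so by convexity of $B$ we get $\cco\{b_0,\dots,b_n\}\subset B$, hence $\dim\Aff(B)\geq n$. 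Since $B\subset\mathbb R^n$, this dimension equals $n$, and thus $B\in\mathcal K^n_n$.

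Statement (2) is then immediate: the lower Vietoris topology is coarser than the Fell topology, so an open set in $(\mathcal K^n,\tau_{LV})$ is automatically open in $(\mathcal K^n,\tau_F)$. Therefore $\mathcal K^n_n$ is $\tau_F$-open in $\mathcal K^n$.

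For (3), recall the cited result of Sakai–Yang stating that $(\mathcal K^n,\tau_F)$ is a $Q$-manifold (in fact homeomorphic with $\mathbb R^n\times Q$). By part (2), $(\mathcal K^n_n,\tau_F)$ is an open subspace of this $Q$-manifold, and it is separable and metrizable as a subspace of a separable metric space. Applying Remark~\ref{r: abiertos en Q manifolds}(1) finishes the argument.

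The only nontrivial step is (1), and even there the work has already been done in Lemma~\ref{l:fam afin indep}; the remaining argument is essentially a one-line application of convexity. I don't anticipate any real obstacle.
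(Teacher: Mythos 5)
Your proposal is correct and follows the paper's own argument essentially verbatim: part (1) via Lemma~\ref{l:fam afin indep} and the basic neighborhood $[a_0,\dots,a_n;\delta]$, part (2) from the comparison of $\tau_{LV}$ and $\tau_F$, and part (3) from the Sakai--Yang theorem together with Remark~\ref{r: abiertos en Q manifolds}. The only cosmetic difference is that the paper concludes $B\in\mathcal K^n_n$ by noting the simplex $\cco\{b_0,\dots,b_n\}$ has nonempty interior, while you phrase it as $\dim\Aff(B)\geq n$; these are the same observation.
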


\begin{proof}
(1) Let $A\in \mathcal K_n^n$ and pick $a_0, \dots, a_n\in A$ and $\delta>0$ such that the family $\{B(a_i, \delta) \}_{i=0}^n$ is affinely independent. 
Then, if $B\in\mathcal K^n$ is such that $B\cap B(a_i, \delta)\neq \emptyset $ for all $i=0, \dots, n$, we can find points $b_i\in B\cap B(a_i, \delta)$ such that $\{b_0,\dots ,b_n\}$ is affinely independent. Therefore, the simplex $\sigma:=\cco(\{b_0, \dots ,b_n\})$ has nonempty interior in $\mathbb R^{n}$ and since $B$ is convex, we conclude that $\sigma\subset B$. Thus, $B$ has nonempty interior and therefore $B\in\mathcal K^{n}_n$. From this we conclude that
$(\mathcal K^n_n, \tau_{LV})$ is open in $(\mathcal K^n, \tau_{LV})$.

(2) Since the lower Vietoris topology is weaker than the Fell topology, this follows directly from (1).

(3) By \cite[Main Theorem]{Sakai-Yang-2007}, $(\mathcal K^n, \tau_F)$ is a $Q$-manifold homeomorphic to $\mathbb R^{n}\times Q$. Therefore, by Remark~\ref{r: abiertos en Q manifolds}, every nonempty open subset of $\mathcal K^n$ is also a $Q$-manifold. In particular
$(\mathcal K^n_n, \tau_{F})$ is a $Q$-manifold.
\end{proof}

On the other hand, the topology of $\big(\mathcal K^n_{n,b},d_H\big)$ was described
in \cite[Corollary 3.11]{AntonyanNatalia}).
\begin{theorem}\label{th:cb(n)}\cite[Corollary 3.11]{AntonyanNatalia}
  If $n\geq 2$, the hyperspace $\big(\mathcal K^n_{n,b},d_H\big)$ is a $Q$-manifold homeomorphic to $Q\times\mathbb{R}^{\frac{n(n+3)}{2}}$.
\end{theorem}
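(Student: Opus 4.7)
The plan is to decompose $\mathcal{K}^n_{n,b}$ as a product
\[\mathcal{K}^n_{n,b} \cong \mathcal{E}_n \times \mathcal{F},\]
where $\mathcal{E}_n$ denotes the space of solid $n$-dimensional ellipsoids in $\mathbb{R}^n$ and $\mathcal{F}$ is a ``normalized slice'' containing only bodies in canonical affine position. I would then identify $\mathcal{E}_n \cong \mathbb{R}^{n(n+3)/2}$ and $\mathcal{F}\cong Q$. The natural continuous action of the affine group $\mathrm{Aff}(n)=\mathrm{GL}(n)\ltimes\mathbb{R}^n$ on $\mathcal{K}^n_{n,b}$ will supply the trivialization.

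First I would parametrize $\mathcal{E}_n$: every $n$-dimensional ellipsoid is uniquely expressible as $E_{c,B}:=c+B(\mathbb{B})$ with $c\in\mathbb{R}^n$ and $B$ a positive definite symmetric $n\times n$ matrix. Writing $P(n)$ for the cone of positive definite symmetric matrices inside the space $S(n)\cong\mathbb{R}^{n(n+1)/2}$ of symmetric matrices, this yields a bijection $\mathcal{E}_n\leftrightarrow\mathbb{R}^n\times P(n)$. Since $P(n)$ is a non-empty open convex subset of $S(n)$, it is homeomorphic to $\mathbb{R}^{n(n+1)/2}$, hence
\[\mathcal{E}_n\cong\mathbb{R}^n\times\mathbb{R}^{n(n+1)/2}=\mathbb{R}^{n(n+3)/2}.\]
A direct estimate using Lemma~\ref{l:d_H-dAW} shows this parametrization is a homeomorphism in the Hausdorff topology.

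Next I would construct a continuous, $\mathrm{Aff}(n)$-equivariant map $\Phi:\mathcal{K}^n_{n,b}\to\mathcal{E}_n$. A natural choice is the John ellipsoid (the unique maximal-volume ellipsoid inscribed in $K$), or equivalently the inertia ellipsoid built from the centroid $c(K)$ and the covariance matrix of $K$; the latter is manifestly continuous in the Hausdorff metric. From $\Phi$ one extracts, for each $K$, a unique affine map $T_K\in\mathrm{Aff}(n)$ with positive definite symmetric linear part sending $\mathbb{B}$ onto $\Phi(K)$, and defines
\[\Psi:\mathcal{K}^n_{n,b}\to\mathcal{E}_n\times\mathcal{F},\qquad \Psi(K):=\bigl(\Phi(K),\,T_K^{-1}(K)\bigr),\]
with $\mathcal{F}:=\Phi^{-1}(\mathbb{B})$. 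The inverse $(E,K_0)\mapsto T_E(K_0)$ is continuous by Proposition~\ref{p: basic properties} together with the continuous dependence of $T_E$ on $E$, so $\Psi$ is a homeomorphism.

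Finally I would show $\mathcal{F}\cong Q$. By John's theorem, every $K\in\mathcal{F}$ satisfies $\mathbb{B}\subseteq K\subseteq n\mathbb{B}$, so $\mathcal{F}$ is a closed subset of the compact hyperspace $\mathcal{L}:=\{K\in\mathcal{K}^n_b:\mathbb{B}\subseteq K\subseteq n\mathbb{B}\}$. The hyperspace of all nonempty compact convex subsets of $n\mathbb{B}$ is $Q$ by the Nadler--Quinn--Stavrakas theorem~\cite{Nadler}, and $\mathcal{L}$ is a retract of it via $K\mapsto\cco(K\cup\mathbb{B})$, hence $\mathcal{L}$ is a compact metric AR. Here lies the main obstacle: $\mathcal{F}$ is \emph{not} convex under Minkowski combinations, since the John ellipsoid of a Minkowski convex combination of two bodies in $\mathcal{F}$ need not be $\mathbb{B}$. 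I would overcome this by verifying Toru\'nczyk's topological characterization of $Q$ for $\mathcal{F}$: it is a compact metric AR (being a retract of $\mathcal{L}$ via $K\mapsto T_{\Phi(K)}^{-1}(K)$), and it satisfies the disjoint cells property, to be established by explicit boundary perturbations of $K\in\mathcal{F}$ in directions preserving the normalization $\Phi(K)=\mathbb{B}$.
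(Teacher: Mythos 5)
Note first that the paper does not prove this statement at all: it is quoted verbatim from \cite{AntonyanNatalia}, so your attempt has to be measured against the proof in that source, whose strategy (an affinely equivariant ellipsoid normalization splitting $\mathcal K^n_{n,b}$ into an $\mathbb R^{n(n+3)/2}$-factor and a compact slice) your outline essentially reproduces. The product-decomposition half of your argument is sound in outline: the parametrization of ellipsoids by $\mathbb{R}^n\times P(n)$, equivariance, the retraction onto the slice $\mathcal F$, and the bijectivity of $\Psi$ all go through. Two inaccuracies should still be flagged. The John ellipsoid and the inertia (Legendre) ellipsoid are \emph{not} ``equivalent''; they are different affinely equivariant ellipsoids, and while either could serve, the continuity of the John ellipsoid in $d_H$ requires its own uniqueness-plus-compactness argument, and the sandwiching constants in the definition of $\mathcal L$ change if you switch to the inertia ellipsoid. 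Also, Proposition~\ref{p: basic properties} concerns Minkowski addition in the lower Vietoris/Fell topologies; it does not give the joint continuity of $(E,K_0)\mapsto T_E(K_0)$ in the Hausdorff metric (that step is easy, but needs a direct estimate, not that citation).

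The genuine gap is the final step, $\mathcal F\cong Q$, which is the heart of the theorem and is left as an unverified claim. The AR part is fine (a retract of the AR $\mathcal K^n_{n,b}$, or of $\mathcal L$, is an AR), but Toru\'nczyk's criterion then requires the disjoint cells property for $\mathcal F$, and this is exactly where the difficulty sits: an arbitrary small perturbation of $K\in\mathcal F$ destroys the normalization $\Phi(K)=\mathbb B$, and the corrective renormalization $K'\mapsto T_{\Phi(K')}^{-1}(K')$ moves the perturbed bodies in a way you cannot control when trying to make two maps $I^m\to\mathcal F$ have disjoint images while staying $\varepsilon$-close to the originals. ``Explicit boundary perturbations in directions preserving the normalization'' is therefore not a routine construction; proving that the fiber of the equivariant ellipsoid map is a Hilbert cube is essentially the content of \cite[Corollary 3.11]{AntonyanNatalia}, where it is obtained through equivariant $Q$-manifold machinery rather than a bare-hands DCP verification. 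As written, your proposal reduces the theorem to this unproved claim, so it does not yet constitute a proof.
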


Notice that $\mathcal{K}_{n,b}^n$ is precisely the family of all compact bodies of $\mathbb R^n$. Namely,
$$\mathcal{K}_{n,b}^n=\{A\in\mathcal K^n: \Int (A)\neq\emptyset
\text{ and }A\text{ is compact}\}.$$

On the other hand, notice that each element in $\mathcal{K}_0^H$ is a singleton $\{x\}$ with $x\in H$.  Hence, the following fact can easily be proved.

\begin{fact}\label{f: H homeomorfo a los singuletes}
    The hyperspaces $(\mathcal K_0^H, \tau_{LV})$ and $(\mathcal K_0^H, \tau_{F})$ are homemorphic with $H$.  
\end{fact}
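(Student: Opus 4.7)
The plan is to exhibit an explicit bijection and verify continuity in both directions for both topologies. Define $\varphi: H \to \mathcal K_0^H$ by $\varphi(x) = \{x\}$. This is clearly a bijection, so the whole task reduces to checking that $\varphi$ and $\varphi^{-1}$ are continuous with respect to each of $\tau_{LV}$ and $\tau_F$.

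First I would handle $\tau_{LV}$. For continuity of $\varphi$, it is enough to test basic open sets of the form $[a_1,\dots,a_m;\varepsilon]_{\mathcal K_0^H}$ described in the preliminaries; the preimage under $\varphi$ is precisely $\bigcap_{i=1}^m B(a_i,\varepsilon)$, which is open in $H$. For continuity of $\varphi^{-1}$, given $\{x\}\in \mathcal K_0^H$ and an open $U\subset H$ with $x\in U$, choose $\varepsilon>0$ with $B(x,\varepsilon)\subset U$; then $[x;\varepsilon]_{\mathcal K_0^H}$ is a $\tau_{LV}$-neighborhood of $\{x\}$ whose $\varphi^{-1}$-image is exactly $B(x,\varepsilon)\subset U$. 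Hence $\varphi:H\to(\mathcal K_0^H,\tau_{LV})$ is a homeomorphism.

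Next I would pass to $\tau_F$. Since $\tau_{LV}\subseteq\tau_F$, the map $\varphi^{-1}:(\mathcal K_0^H,\tau_F)\to H$ is automatically continuous (it is already continuous for the weaker topology). So only the continuity of $\varphi:H\to(\mathcal K_0^H,\tau_F)$ remains. The subbasic open sets of $\tau_F$ that do not already belong to $\tau_{LV}$ are those of the form $(H\setminus C)^+_{\mathcal K_0^H}$ with $C\subset H$ compact; their preimage under $\varphi$ is $\{x\in H:\{x\}\subset H\setminus C\} = H\setminus C$, which is open in $H$ because $C$ is compact, hence closed, in the Hausdorff space $H$. Combined with Step 1, this proves that $\varphi$ is also a homeomorphism onto $(\mathcal K_0^H,\tau_F)$.

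There is essentially no obstacle in this argument; it is a direct unwinding of the definitions of $\tau_{LV}$ and $\tau_F$ restricted to singletons. The only mild subtlety is remembering that compact sets in $H$ are closed (so their complements are open), which is what makes the upper part of the Fell topology harmless on the hyperspace of singletons.
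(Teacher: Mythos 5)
Your argument is correct: the map $x\mapsto\{x\}$ is a bijection onto $\mathcal K_0^H$, and your verification that preimages of the subbasic sets $U^-$ and $(H\setminus C)^+$ are open, together with the observation that $[x;\varepsilon]$ corresponds exactly to $B(x,\varepsilon)$, is precisely the routine unwinding of definitions the paper has in mind when it states the fact without proof ("can easily be proved"). Nothing is missing, and the remark that the upper Fell sets cause no trouble because compact subsets of $H$ are closed is the only point worth noting.
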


\subsection{Metric projections on Hilbert spaces}\label{Subsection metric projections}

For every $A\in \mathcal K^H$ and $x\in H$, let $\pi_A(x)$ denote the point of $A$ closest to $x$. 
In this case $\pi_A(x)$  is called the \textit{metric projection of $x$ on $A$} and the associate map $\pi_A:H\to A$ is called the \textit{metric projection of $H$ onto $A$}.
Clearly $\pi_A(x)=x$ if and only if $x\in A$.
Furthermore, the map $\pi_A$ satisfies the following useful properties:

\begin{itemize}
    \item[(P1)] If $x\notin A$, then $\pi_A(x)\in \partial A$.
    \item [(P2)] For all $x\in H$ and $a\in A$, the following inequality holds
$$\langle x-\pi_A(x), a-\pi_A(x) \rangle
\leq 0.$$

    \item [(P3)] The map $\pi_A$ is non expansive, i.e., 
    $$\|\pi_A(x)-\pi_A(y)\|\leq \|x-y\|,\; \text{for all } x,y\in H.$$

    \item [(P4)] If $V$ is a closed vector subspace of $H$, then for every $x\in H$, the vector $x- \pi_V(x)$ is orthogonal to $V$. That is, $\langle x-\pi_V(x), v\rangle =0$ for every $v\in V$. In this case, $\pi_V$ is a linear map called the \textit{orthogonal projection} of $H$ onto $V$ and satisfies the following equality 
    $$\pi_V(x)+\pi_{V^\perp}(x)=x\quad \text{for every } x\in X,$$
where $V^\perp$ denotes the orthogonal complement of $V$. 

\end{itemize}

Let $F=V+a$ be a closed flat where $V$ is a closed linear subspace of $H$ and $a\in H$. Then, for every $x\in H$,  $v\in V$ and $w\in F$, the following equalities hold:
\begin{itemize}
 \item[(P5)] $\pi_F(x)=\pi_V(x-a) +a$.
 \item [(P6)] $\langle x-\pi_F(x),v\rangle=0$.
 \item [(P7)] $\langle x-\pi_F(x),w-\pi_F(x)\rangle=0$.
\end{itemize}

We refer the reader to \cite[Lema 6.54]{Aliprantis} for the proof of properties (P1)-(P4). Properties (P5)-(P7) can easily be deduced from the previous ones.

\begin{remark}\label{rem:proyeccion unico ortogonal}
If $F$ is a closed flat and $x\in H$ is fixed, the point $\pi_F(x)$ is the only point in $F$ that satisfies property (P7). Indeed, if $w_0\in F$ is a point such that $\langle x-w_0,w-w_0\rangle=0$ for every $w\in F$, then 
\begin{align*}
   \|\pi_F(x)-w_0\|^2&=\langle \pi_F(x)-w_0,\pi_F(x)-w_0\rangle\\
   &= \langle \pi_F(x)-x+x-w_0,\pi_F(x)-w_0\rangle\\
&=\langle \pi_F(x)-x,\pi_F(x)-w_0\rangle+ \langle x-w_0,\pi_F(x)-w_0\rangle\\
&=\langle x-\pi_F(x),w_0-\pi_F(x)\rangle+ \langle x-w_0,\pi_F(x)-w_0\rangle =0,
\end{align*}
    since both $\pi_F(x)$ and $w_0$ are elements of $F$.
\end{remark}

Recall that a \textit{hyperplane} in $H$ is a closed flat $F\subset H$ of codimension 1. In particular, given a vector $a\in H\setminus\{0\}$, the hyperplane through $a$ orthogonal to $a$ is the set
$$H_a:=\{x\in H:\langle x-a,a\rangle=0\}.$$

In this case, it is not difficult to verify that for every $x\in H$, the point $w_0$ defined as
$$w_0:=x+a-\frac{\langle x,a\rangle}{\|a\|^2}a$$
belongs to $H_a$. Also, a simple calculation shows that $\langle x-w_0,w-w_0\rangle=0$ for every $w\in H_a$. Hence, by  Remark~\ref{rem:proyeccion unico ortogonal}, we conclude 
\begin{itemize}
    \item[(P8)] For every $a\in H\setminus \{0\}$ and $x\in H$, $\pi_{H_a}(x)=x+a-\frac{\langle x,a\rangle}{\|a\|^2}a$.
\end{itemize}

For every $A\in \mathcal K^H$, let us denote by $p(A)$ the nearest point of $A$ to the origin $0\in H$.  Namely, 
\begin{equation}\label{eq:p}
    p(A):=\pi_A(0),
\end{equation}
where $\pi$ is the metric projection onto $A$ defined above.

\begin{lemma}\label{l:continuidad funcion p}
\begin{enumerate}[\rm(1)]
    \item The map $\nu:(\mathcal K^H,d_{AW})\to \mathbb R$ given by $\nu(A)=\|p(A)\|$ is continuous.  
    \item The map $p:(\mathcal K^H,d_{AW})\to (H, \|\cdot\|)$  is continuous. 
    \item In particular, $p:(\mathcal K^n,\tau_F)\to (\mathbb R^n, \|\cdot\|)$ is continuous.
\end{enumerate}
\end{lemma}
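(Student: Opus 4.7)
The plan is to handle the three parts in the order (1), (2), (3), since part (3) is an immediate consequence of Fact~\ref{f: AW=FELL} and part (2), while part (1) provides a key quantitative input for part (2).

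For part (1), I would exploit the identity $\nu(A)=\|p(A)\|=d(0,A)$ and apply Lemma~\ref{lem:Lemma-Ananda} directly. Given $\varepsilon>0$, let $\varepsilon'=\min\{\varepsilon,1\}$ so that $\varepsilon'\le 1=1/j$ for $j=1$. Then $d_{AW}(A,B)<\varepsilon'$ forces $\sup_{\|x\|<1}|d(x,A)-d(x,B)|<\varepsilon'$, and choosing $x=0$ yields $|\nu(A)-\nu(B)|<\varepsilon$. Hence $\nu$ is even $1$-Lipschitz for small distances, and continuity is immediate.

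For part (2), which is the core of the lemma, the main obstacle is that in an infinite-dimensional Hilbert space one cannot extract norm-convergent subsequences from bounded sets, so a naive compactness argument is unavailable. I would replace it by the Hilbert-space inequality
\begin{equation*}
\|x-p(A)\|^{2}\le \|x\|^{2}-\|p(A)\|^{2}\qquad\text{for every }x\in A,
\end{equation*}
which is obtained by expanding $\|x\|^{2}=\|p(A)+(x-p(A))\|^{2}$ and using property (P2) with the origin, namely $\langle p(A),x-p(A)\rangle\ge 0$. Given a net (or sequence) $A_{n}\to A$ in $d_{AW}$, I would proceed as follows. Fix any integer $j>\|p(A)\|$; for all $n$ sufficiently large, $d_{AW}(A_{n},A)<1/j$, so Lemma~\ref{lem:Lemma-Ananda} together with $p(A)\in A\cap B(0,j)$ gives $d(p(A),A_{n})\to 0$. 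Choose $y_{n}\in A_{n}$ with $\|y_{n}-p(A)\|<d(p(A),A_{n})+1/n$, so that $y_{n}\to p(A)$ and in particular $\|y_{n}\|\to\|p(A)\|$. Applying the Hilbert-space inequality inside $A_{n}$ with $x=y_{n}$ yields
\begin{equation*}
\|y_{n}-p(A_{n})\|^{2}\le \|y_{n}\|^{2}-\|p(A_{n})\|^{2}.
\end{equation*}
By part (1), $\|p(A_{n})\|\to\|p(A)\|$, so the right-hand side tends to $0$. Therefore $\|y_{n}-p(A_{n})\|\to 0$, and the triangle inequality $\|p(A_{n})-p(A)\|\le\|p(A_{n})-y_{n}\|+\|y_{n}-p(A)\|$ finishes the proof.

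For part (3), since $\mathcal K^{n}$ with the Attouch-Wets topology coincides with $\mathcal K^{n}$ with the Fell topology by Fact~\ref{f: AW=FELL}, continuity of $p:(\mathcal K^{n},\tau_{F})\to\mathbb R^{n}$ is an immediate specialization of part (2). The only genuinely delicate step in the whole argument is the passage from $\|p(A_{n})\|\to\|p(A)\|$ to $p(A_{n})\to p(A)$ in norm, and that is precisely what the Hilbert-space inequality above handles, bypassing any need for sequential compactness.
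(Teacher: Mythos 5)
Your proof is correct, and in both parts it takes a route that is genuinely lighter than the paper's. For part (1), the paper argues by picking nearest points in each of $A$ and $B$ and running a two-sided approximation with a large radius $N$ depending on $\|p(A)\|$; you instead observe that $\nu(A)=\|p(A)\|=d(0,A)$ and that the point $x=0$ already lies in the ball used in Lemma~\ref{lem:Lemma-Ananda} with $j=1$, so $|\nu(A)-\nu(B)|\leq\sup_{\|x\|<1}|d(x,A)-d(x,B)|$ is controlled directly by $d_{AW}$ — a one-line, essentially Lipschitz argument. For part (2), both proofs ultimately rest on part (1) together with the obtuse-angle property (P2) of the metric projection, but they apply it on opposite sides: the paper puts the supporting half-space $C=\{x:\langle x,p(A)\rangle\geq\|p(A)\|^2\}$ around $A$, estimates $d(p(B),C)$ via a point of $A$ near $p(B)$ using the explicit hyperplane projection formula (P8), deduces $\langle p(B),p(A)\rangle\geq\|p(A)\|^2-\|p(A)\|/M$, and then expands $\|p(B)-p(A)\|^2$ with careful bookkeeping ($4\|a\|/M+1/M^2<\varepsilon^2$); you instead apply the quantitative nearest-point inequality $\|x-p(A_n)\|^2\leq\|x\|^2-\|p(A_n)\|^2$ \emph{inside} the approximating set $A_n$, at a point $y_n\in A_n$ chosen near $p(A)$, and squeeze $\|y_n-p(A_n)\|\to0$ using $\|y_n\|\to\|p(A)\|$ and $\|p(A_n)\|\to\|p(A)\|$ from part (1), finishing with the triangle inequality. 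Your version avoids (P8) and the explicit $\varepsilon$-arithmetic, works in the same generality (arbitrary Hilbert $H$), and the sequential formulation is legitimate since $d_{AW}$ is a metric; part (3) is handled identically in both, via Fact~\ref{f: AW=FELL}.
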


\begin{proof}
(1) Let $A\in\mathcal K^H$ and $\varepsilon>0$. Pick $N\in\mathbb N$ large enough so that $1/N<\varepsilon$ and $\|a\|+1<N$, where $a:=p(A)$.
If $B\in\mathcal K^H$ satisfies $d_{AW}(A,B)<1/N$, by Lemma~\ref{lem:Lemma-Ananda} it follows that
\begin{equation}\label{eq:1}
    \sup_{\|x\|<N}|d(x,A)-d(x,B)|<1/N.
\end{equation}

In particular, since $a\in B(0, N)\cap A$, we have $d(a, B)<1/N$. Thus, there exists $\beta\in B$ such that $d(a, \beta)<1/N$. Let $b:=p(B)\in B$. Hence
\begin{equation}\label{des:1}
    \|b\|\leq \|\beta\| \leq \|\beta-a\|+\|a\|<1/N+\|a\|<\varepsilon+\|a\|.
\end{equation}

On the other hand, since $\|b\|<1/N+\|a\|\leq 1+\|a\|$, by the choice of $N$ we also have $b\in B(0,N)\cap B$.
Using inequality (\ref{eq:1}) we obtain that $d(b, A)<1/N$. Thus, there exists $\alpha\in A$ such that $\|b-\alpha\|<1/N$ and therefore
$$ \|a\| \leq \|\alpha\|\leq \|\alpha-b\|+\|b\|<1/N+\|b\|<\varepsilon+\|b\|.$$
This inequality,  in combination with inequality (\ref{des:1}), completes the proof of (1).

(2) Again, let $A\in\mathcal K^H$ and $\varepsilon>0$. As in the proof of (1), denote $a:=p(A)$. Let  $M\in\mathbb N$ be big enough such that $M>\max\{\|a\|+1, 1/\varepsilon\}$ and 
$$\frac{4\|a\|}{M}+\frac{1}{M^2}<\varepsilon^2.$$
By the first part of this lemma, we can find $\delta>0$ such that $|\nu(B)-\nu(A)|<1/M$ for every $B\in\mathcal K^H$ with $d_{AW}(A, B)<\delta$.

Let $\eta=\min\{\delta, 1/M\}$ and consider $B\in\mathcal K^H$ with $d_{AW}(A, B)<\eta$. Denote $b:=p(B)$. If $a=0$, then $\|b-a\|=\|b\|=|\nu(B)-\nu(A)|<1/M<\varepsilon$. In this case $p$ would be continuous at $A$, as desired. Thus, to prove the general case, we can assume that $a\neq 0$. 

Notice that inequality $|\nu(B)-\nu(A)|<1/M$ implies that 
\begin{equation}\label{ineq:importante}
\|b\|=\nu(B)<\nu (A)+1/M=\|a\|+1/M\leq \|a\|+1< M 
\end{equation}

Hence, $b\in B(0,M)$ and therefore, by Lemma~\ref{lem:Lemma-Ananda}, there exists a point $y\in A$ such that $\|y-b\|<1/M$.

\textbf{Claim.} $\langle b,a\rangle \geq \|a\|^2-\frac{\|a\|}{M}$. Indeed, by property (P2),
\begin{equation*}
   A\subseteq \{x\in H: \langle x-a,a\rangle \geq 0\}=\{x\in H:\langle x,a\rangle\geq \|a\|^2\}=:C.
\end{equation*}
If $b\in C$, we are done. Otherwise, by (P1), the point $\pi_C(b)\in\partial C$. Notice that $\partial C$ is precisely the hiperplane through $a$ orthogonal to $a$. Namely $\partial C=H_a=\{x\in H: \langle x-a,a\rangle = 0\}$. Thus, by property (P8), 
\begin{equation}\label{eq:distancia a C}
  d(b, C)=\left\|b-\left(b+a-\frac{\langle b,a\rangle}{\|a\|^2}a\right)\right\| 
= \left |\|a\|-\frac{\langle b,a\rangle}{\|a\|}\right|
\end{equation}
Since $y\in A\subset C$, we also have that
\begin{equation}\label{eq:distancia menor que 1/M}
    d(b, C)\leq d(b,A)\leq \|b-y\|<\frac{1}{M}. 
\end{equation}
Finally, combining equations (\ref{eq:distancia a C}) and (\ref{eq:distancia menor que 1/M}),
we obtain
$$\|a\|-\frac{\langle b,a\rangle}{\|a\|}\leq \left |\|a\|-\frac{\langle b,a\rangle}{\|a\|}\right|<\frac{1}{M},$$
which directly implies $\langle b,a\rangle \geq \left(\|a\|^2-\frac{\|a\|}{M}\right)$, as desired. 
\QEDA

Now, by inequality (\ref{ineq:importante}), $\|b\|<\|a\|+1/M$. Finally, we can combine this last inequality with the claim and the choice of $M$, to obtain that
\begin{align*}
    \|b-a\|^2&=\|b\|^2+\|a\|^2-2\langle b,a\rangle\\
    &<\left(\|a\|+\frac{1}{M} \right)^2+\|a\|^2-2\langle a,b\rangle\\
    &\leq \left(\|a\|+\frac{1}{M} \right)^2 +\|a\|^2-2\left(\|a\|^2-\frac{\|a\|}{M}\right)\\
    &=\frac{4\|a\|}{M}+\frac{1}{M^2}<\varepsilon^2.
\end{align*}

 This completes the proof of the lemma.

\end{proof}

\section{The topology of the Grassmann manifolds}\label{s:Grassmann}

Let us denote by $\Gr_k(H)$ the set of all $k$-dimensional linear subspaces of $H$. The family $\Gr_k(H)$ is called the  \textit{Grassmann manifold}  (also called the \textit{grassmannian}) 
 \textit{of $k$-dimensional subspaces of $H$}.
On the other hand, we will denote by $\ag_k(H)$ the family of all $k$-dimensional flats in $H$. $\ag_k(H)$ is called the \textit{affine grassmanian of $k$-dimensional flats of $H$}.
 When $H=\mathbb R^n$, we simply write $\Gr_k(n)$ or $\ag_k(n)$, accordingly.

There are several (equivalent) ways to define a topology in $\Gr_k(H)$ (and $\ag_k(H)$) as a quotient space. 
For $\Gr_k(H)$,  one of the most common constructions is the next one. Consider the space 
$$L(k,H):=\{T:\mathbb R^k\to H: T \text{ is linear and injective}\}$$
equipped with the point-open topology.

Thus, the map $p_k:L(k,H)\to \Gr_k(H)$ defined by $p_k(T)=\im T:=T(\mathbb R^k)$ is a surjection and therefore we can endow $\Gr_k(H)$ with the quotient topology induced by $p_K$
\footnote{For the complex case, we can replace $\mathbb R^k$ by $\mathbb C^k$ and the same construction holds}. We refer the reader to \cite{Alexandrino Bettiol}, \cite{Baker}, \cite{Milnor Stasheff} and \cite{Steenrod} for more information and other equivalent quotient topologies on $\Gr_k(H)$.

\subsection{The grassmannians as hyperspaces} Observe that both $\Gr_k(H)$ and $\ag_k(H)$
are subsets of $\mathcal K^H$. Hence, every topology on $\mathcal K^H$ induces a topology on $\Gr_k(H)$ and $\ag_k(H)$. In \cite{Resende-Santos}, the authors proved that the quotient topology on $\Gr_k(H)$ induced by $p_K$ coincides with the Lower Vietoris Topology. It is worth mentioning that even if their proof was made for the complex case, the same ideas work for the real case.
Even more, we can use the same ideas to prove that the lower Vietoris topology in $\ag_k(H)$ is equivalent to a natural quotient topology on $\ag_k(H)$.
For the sake of completeness, we include a complete proof of these facts.

Consider the set
$\Aff_k(H)$ consisting of all affine injective transformations $T:\mathbb R^k\to H$. Namely, 
 $\varphi \in \Aff_k(H)$ iff  $\varphi=T_u+\sigma$ where $T_u: \mathbb R^k \to H$ is the constant map $T_u(x):=u$, and $\sigma:\mathbb R^k \to H$ is a linear injective map. 
The set $\Aff_k(H)$ endowed with the point-open topology generated by the sets $N(x,W):=\{T\in \Aff_k(H): T(x)\in W \}$, where $x\in\mathbb R^k$ and $W\subset H$ is open, becomes a topological space.

Define a map $q_k:\Aff_k(\mathbb R^n) \to \ag_k(H)$ by $q_k(\varphi):=\im(\varphi)$. Clearly $q_k$ is a surjection and therefore it induces a quotient topology, $\tau_{q_k}$, on $\ag_k(H)$.

\begin{theorem}\label{t:topologia cociente y lower coincide}
In $\ag_k(H)$, the quotient topology $\tau_{q_k}$ and the lower Vietoris topology are the same.  
\end{theorem}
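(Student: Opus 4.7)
The aim is to establish both inclusions $\tau_{LV}\subseteq\tau_{q_k}$ and $\tau_{q_k}\subseteq\tau_{LV}$. By the universal property of the quotient topology, the first inclusion is equivalent to the continuity of $q_k\colon\Aff_k(H)\to(\ag_k(H),\tau_{LV})$, while the second asks that whenever $q_k^{-1}(V)$ is open in $\Aff_k(H)$ and $F\in V$, a basic lower Vietoris neighbourhood $[a_0,\dots,a_k;\varepsilon]$ of $F$ sits inside $V$.

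The first (easy) direction I would handle by checking that $q_k^{-1}(B(a,\varepsilon)^-)$ is open in the point-open topology. If $\varphi_0\in\Aff_k(H)$ satisfies $\im(\varphi_0)\cap B(a,\varepsilon)\neq\emptyset$, one picks $x_0\in\mathbb R^k$ with $\varphi_0(x_0)\in B(a,\varepsilon)$; the basic neighbourhood $N(x_0,B(a,\varepsilon))$ of $\varphi_0$ is then entirely contained in $q_k^{-1}(B(a,\varepsilon)^-)$. Since the sets $B(a,\varepsilon)^-$ form a subbase for $\tau_{LV}$, this suffices.

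For the converse, fix an open $V\subset\ag_k(H)$ in the quotient topology and an element $F\in V$. Choose an affinely independent set $\{a_0,\dots,a_k\}\subset F$ with $F=\Aff\{a_0,\dots,a_k\}$, and define $\varphi_0\in\Aff_k(H)$ by
\[
\varphi_0(x)\;=\;a_0+\sum_{i=1}^k x_i(a_i-a_0),
\]
so that $q_k(\varphi_0)=F\in V$. Because $q_k^{-1}(V)$ is open, there exist points $x_1,\dots,x_m\in\mathbb R^k$, open sets $W_1,\dots,W_m\subset H$, and radii $\eta_j>0$ with $B(\varphi_0(x_j),\eta_j)\subset W_j$ such that $\varphi_0\in\bigcap_{j=1}^m N(x_j,W_j)\subset q_k^{-1}(V)$.

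The main step, and principal obstacle, is choosing $\varepsilon>0$ small enough that two conditions hold simultaneously: (i) by Lemma~\ref{l:fam afin indep}, the family $\{B(a_i,\varepsilon)\}_{i=0}^k$ is affinely independent; and (ii) for every choice $b_i\in B(a_i,\varepsilon)$ the affine map $\psi(x):=b_0+\sum_{i=1}^k x_i(b_i-b_0)$ lies in $\bigcap_j N(x_j,W_j)$. Condition (ii) is controlled by the identity
\[
\psi(x_j)-\varphi_0(x_j)\;=\;(b_0-a_0)\Bigl(1-\sum_{i=1}^k (x_j)_i\Bigr)+\sum_{i=1}^k (x_j)_i\,(b_i-a_i),
\]
whence $\|\psi(x_j)-\varphi_0(x_j)\|<\varepsilon\cdot C_j$ with $C_j:=\bigl|1-\sum_i(x_j)_i\bigr|+\sum_i|(x_j)_i|$; taking $\varepsilon<\min_j \eta_j/C_j$ and also small enough to guarantee (i) does the job. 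For any $G\in[a_0,\dots,a_k;\varepsilon]$ one then picks $b_i\in G\cap B(a_i,\varepsilon)$; (i) makes $\{b_0,\dots,b_k\}$ affinely independent, so that $\im(\psi)=\Aff\{b_0,\dots,b_k\}$ is a $k$-dimensional flat contained in the $k$-dimensional flat $G$, hence equal to $G$; (ii) yields $\psi\in q_k^{-1}(V)$, and so $G=q_k(\psi)\in V$. The only technical subtlety is balancing the finite list of $\varepsilon$-constraints coming from the finitely many test points $x_j$ and from the affine independence requirement, which is entirely quantitative thanks to Lemma~\ref{l:fam afin indep} and the displayed bound above.
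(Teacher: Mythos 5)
Your proposal is correct. The easy inclusion $\tau_{LV}\subseteq\tau_{q_k}$ is handled exactly as in the paper (a basic set $N(x_0,U)$ witnesses openness of $q_k^{-1}(U^-)$). For the harder inclusion your route differs in a worthwhile way from the paper's. The paper starts from an arbitrary parametrization $\psi$ of the flat and an arbitrary basic point-open neighbourhood $\bigcap_{i=1}^r N(x_i,U_i)\subset q_k^{-1}(\mathcal W)$; it must then extract a maximal affinely independent subset $\{x_1,\dots,x_m\}$ of the test points, shrink the $U_i$ twice (first to an affinely independent family via Lemma~\ref{l:fam afin indep}, then further using continuity of affine combinations), build the lower Vietoris neighbourhood $\bigcap_{i=1}^m W_i^-$ around the images $\psi(x_i)$, and finally reparametrize an arbitrary member $B$ by composing its given parametrization $\phi$ with an affine bijection $\sigma$ of $\mathbb R^k$ sending $x_i\mapsto y_i$. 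You avoid both the reduction step and the reparametrization trick by fixing at the outset the standard parametrization $\varphi_0$ attached to a full affine basis $\{a_0,\dots,a_k\}$ of $F$ and centering the lower Vietoris neighbourhood at these $a_i$ rather than at images of the test points; the membership of the explicitly constructed $\psi$ in every $N(x_j,W_j)$ is then controlled by your quantitative estimate $\|\psi(x_j)-\varphi_0(x_j)\|<\varepsilon C_j$ with $C_j=\bigl|1-\sum_i(x_j)_i\bigr|+\sum_i|(x_j)_i|$, in place of the paper's qualitative appeal to continuity of addition and scalar multiplication. (Note in passing that $C_j\geq 1$ by the triangle inequality, so $\min_j\eta_j/C_j$ is a genuine positive number and your choice of $\varepsilon$ is legitimate.) Both arguments rest on the same two pillars, Lemma~\ref{l:fam afin indep} and the stability of affine combinations under small perturbations of the points; your version is more explicit and arguably shorter, while the paper's version works directly with an arbitrary subbasic neighbourhood of an arbitrary parametrization, which is why it needs the extra bookkeeping.
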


\begin{proof}
First,  consider an open set
$U \subset H$ and let us show that $U^-$ belongs to $\tau_{q_k}$. Namely, we shall prove that
$$q_k^{-1}(U^-)=\{\varphi \in  \Aff_k(H) : \im(\varphi) \cap U \neq \emptyset \},$$
is open in  $\Aff_k(H)$. Indeed, if $\varphi \in q_{k}^{-1}(U^-)$, then there exists a point $x \in \mathbb{R}^k$ such that $\varphi(x) \in U$. This directly implies that $\varphi \in N(x,U) \subset p^{-1}(U^-)$, and therefore $\tau_{LV}\subset \tau_{q_k}$.

For the other inclusion, consider $\mathcal W\in \tau_{q_k}$ and let us prove that $\mathcal W$ belongs to $\tau_{LV}$.

 If $A \in \mathcal{W}$, then there exists an affine transformation  $\psi \in q_{k}^{-1}(\mathcal{W})$  such that $q_k(\psi)=A$. Since $\mathcal W$ belongs to $\tau_{q_k}$, there exist points  $x_1,\dots, x_r \in  \mathbb R^k$ and open sets $U_1, \dots, U_r\subset H$, satisfying that 
\begin{equation} \label{eq: vecindad basica en cociente}
\begin{split}
\psi \in \bigcap_{i=1}^r N(x_i,U_i) \subset q_{k}^{-1}(\mathcal{W})
\end{split}
\end{equation}

Without loss of generality, we can assume that 
for a certain $m \leq r$ the set $\{x_1,\ldots, x_m\}$ is affinely independent and $\{x_1,\ldots, x_m,x_j\}$ is affinely dependent for every $x_j\in\{x_1,\dots, x_r\}\setminus\{x_1,\dots ,x_m\}.$
Since $\psi$ is an affine injective map, the set 
$S=\{ \psi(x_1),  \dots, \psi(x_m)\}$ is affinely independent in  $H$ and $\psi(x_i) \in U_i$ for every $i=1,\dots,r$. By Lemma~\ref{l:fam afin indep}, we can find an affinely independent family $U'_1,\dots,U'_m\subset H$ where each $U'_i$ is open in $H$ and
$$\psi(x_i) \in U'_i\subset U_i,$$ for each $i=1,\dots, m$. 

Furthermore, by the choice of $m$, we can find for every $j\in\{1,\dots, r\}$ real scalars $\alpha_{j1}, \dots ,\alpha_{jm}$ with the property that 

$$x_j=\sum_{i=1}^m \alpha_{ji} x_i,\;\text{and }\; \sum_{j=1}^m \alpha_{ji}=1.$$
Clearly, if $j\in \{1,\dots ,m\}$, the scalar $\alpha_{ji}$ coincides with the Kronecker delta $\delta_{ji}$.
Also, since $\psi$ is an affine transformation, notice that
$$\sum_{i=1}^m \alpha_{ji} \psi(x_i)=\psi(x_j)\in U_j.$$
By the continuity of the addition and the scalar multiplication on $H$, we can find open sets $W_1,\dots, W_m\subset H$ such that $\psi(x_i)\in W_i\subset U_i'$ for every $i\in\{1,\dots,m\}$, and 
\begin{equation}\label{eq: contencion W en U}
    \sum_{i=1}^m \alpha_{ji} z_i\in U_j\;\text{ for every }j=1,\dots, r,
\end{equation}
provided that each $z_i\in W_i$ for $i=1,\dots, m$. Notice that

$$A \in \mathcal{U}:=\bigcap_{i=1}^m W_i^-\in \tau_{LV}.$$
To finish the proof, it suffices to verify that $\mathcal U\subset \mathcal W$.
Let $B\in\mathcal U$ and $\phi \in \Aff_k(H)$ be such that $q_k(\phi)=B$. Hence $B\in W_i^-$ for each $i=1,\dots, m$ and therefore we can find points $y_1,\dots, y_m\in\mathbb R^k$ such that $\phi(y_i) \in W_i\subset U_i'$, $i=1,\dots, m$.
 By the choice of $\{U_i'\}_{i=1}^m$ and the fact that $\phi\in \Aff_k (H)$, we conclude that $\{y_1,\dots ,y_m\}$ is an affinely independent set.  Thus, we can find an affine bijection $\sigma:\mathbb R^k\to\mathbb R^k$ such that $\sigma(x_i)=y_i$ for every $i=1,\dots, m$. Notice that in this case $\phi\circ\sigma\in \Aff_k(H)$ and 
 $$B=\im (\phi)=\im (\phi\circ \sigma)=q_k(\phi\circ\sigma).$$

Since $\phi(y_i)\in W_i$ for every $i=1,\dots, m$, we can use formula (\ref{eq: contencion W en U}) to conclude that 
 $$\phi(\sigma(x_j))=\sum_{i=1}^m \alpha_{ji} \phi\left( \sigma \left(   x_i\right) \right)=\sum_{i=1}^m \alpha_{ji} \phi\left(y_i \right) \in B \cap U_j,$$
and therefore
 $\phi \circ \sigma \in N(x_j,U_j)$,  for all $j=1,\dots, r$.
 Finally, we can use formula~(\ref{eq: vecindad basica en cociente})
  to infer that $\phi \circ \sigma\in q_k^{-1}(\mathcal W)$ and then
  $$B=q_k(\phi \circ \sigma)\in   \mathcal W,$$ 
as desired.
 \end{proof}

In addition to the lower Vietoris topology, there are other hyperspace topologies in $\mathcal K^H$ that induce a good topology in $\Gr_k(H)$ and $\ag_k(H)$.  That is the case of the Fell and the Attouch-Wets topology. However, it is interesting to point out that in $\ag_k(H) $ (and in $\Gr_k(H)$ as well), these two topologies coincide with the lower Vietoris topology (and therefore with the quotient topology induced by the maps $p_k$ and $q_k$, respectively).

\begin{theorem}\label{t:lower and Fell coincide}
In $\ag_k(H)$, the lower Vietoris topology and the Fell topology coincide. 
\end{theorem}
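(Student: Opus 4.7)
The plan is to establish the non-trivial inclusion $\tau_F\subset\tau_{LV}$ on $\ag_k(H)$; the reverse inclusion is immediate from the definitions of the two topologies. Since $\tau_F$ is subbasically generated by sets of the form $U^-$ (already in $\tau_{LV}$) and $(H\setminus C)^+$ with $C\subset H$ compact, it suffices to show that for every such $C$ the set $\mathcal{O}:=(H\setminus C)^+\cap\ag_k(H)$ is $\tau_{LV}$-open. So I would fix $F\in\mathcal{O}$ and exhibit a basic $\tau_{LV}$-neighborhood of $F$ contained in $\mathcal{O}$.

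After setting $\eta:=d(F,C)>0$ and picking $R>0$ with $C\subset B(0,R)$, I would choose affinely independent points $a_0,\ldots,a_k\in F$ so that $F=\Aff\{a_0,\ldots,a_k\}$, and appeal to Lemma~\ref{l:fam afin indep} to ensure that for small $\varepsilon>0$ the family $\{B(a_i,\varepsilon)\}_{i=0}^k$ is affinely independent. In particular every $G\in[a_0,\ldots,a_k;\varepsilon]_{\ag_k(H)}$ has the form $G=\Aff\{b_0,\ldots,b_k\}$ with $b_i\in G\cap B(a_i,\varepsilon)$. The target is then to show that, after further shrinking $\varepsilon$, no such $G$ meets $C$.

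The crucial estimate is a uniform bound on the affine coordinates: if $c\in G\cap B(0,R)$, the expansion $c=\sum\lambda_i b_i$ with $\sum\lambda_i=1$ is equivalent to solving $c-b_0=\sum_{i=1}^{k}\lambda_i(b_i-b_0)$ in the finite-dimensional subspace $\mathrm{span}\{b_i-b_0\}_{i=1}^{k}$. The associated Gram matrix $\mathfrak{G}(b):=\bigl(\langle b_i-b_0,\, b_j-b_0\rangle\bigr)_{i,j=1}^{k}$ depends continuously on $b=(b_0,\ldots,b_k)$ and is positive definite, with smallest eigenvalue $\mu_0>0$ at $b=a$. Shrinking $\varepsilon$ to keep $\mu_{\min}(\mathfrak{G}(b))\geq\mu_0/2$ and using $\|c-b_0\|\leq R+\|a_0\|+\varepsilon$, a standard eigenvalue estimate yields a constant $M$, depending only on $F$, $R$ and the chosen $a_i$'s, such that $\sum_{i=0}^{k}|\lambda_i|\leq M$ uniformly in the $b_i$'s and in $c$.

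The contradiction step is then immediate: shrinking $\varepsilon$ once more to force $M\varepsilon<\eta$, if some $G\in[a_0,\ldots,a_k;\varepsilon]_{\ag_k(H)}$ met $C$ at a point $c$, the point $y:=\sum\lambda_i a_i$ would lie in $F$ and
\[
\|c-y\|=\Bigl\|\sum_{i=0}^{k}\lambda_i(b_i-a_i)\Bigr\|\leq\Bigl(\sum_{i=0}^{k}|\lambda_i|\Bigr)\varepsilon\leq M\varepsilon<\eta,
\]
contradicting $d(F,C)=\eta$. Hence $[a_0,\ldots,a_k;\varepsilon]_{\ag_k(H)}\subset\mathcal{O}$, as required. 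I expect the main obstacle to be the Gram-matrix eigenvalue bound: although routine in spirit, it must be made fully uniform in both $b$ and $c$, and in the infinite-dimensional setting one must be careful to restrict attention to the finite-dimensional subspace $\mathrm{span}\{b_i-b_0\}_{i=1}^k$ so that continuity of eigenvalues actually applies.
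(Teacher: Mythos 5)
Your proposal is correct: the reduction to showing that $(H\setminus C)^{+}\cap\ag_k(H)$ is $\tau_{LV}$-open is exactly the right (and the paper's) starting point, every $G\in[a_0,\dots,a_k;\varepsilon]_{\ag_k(H)}$ is indeed $\Aff\{b_0,\dots,b_k\}$ for any choice of $b_i\in G\cap B(a_i,\varepsilon)$ once Lemma~\ref{l:fam afin indep} is invoked, and your Gram-matrix bound is sound (the matrix is $k\times k$, its entries depend continuously on the $b_i$'s, so the smallest-eigenvalue bound is uniform for small $\varepsilon$; the use of $\sum_i|\lambda_i|$ rather than Remark~\ref{r:distancia entre combinaciones convexas} is necessary here because barycentric coordinates of points of a flat need not be nonnegative, and you handle that correctly). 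However, your verification is genuinely different from the paper's. The paper never introduces $d(F,C)$, $R$, or any coordinate estimate: it observes that, since $F$ is a flat disjoint from $C$, the set $\{c,a_0,\dots,a_k\}$ is affinely independent for \emph{every} $c\in C$, applies Lemma~\ref{l:fam afin indep} to each such augmented set to get a radius $\delta_c$, extracts a finite subcover of $C$ by compactness, and then derives the contradiction purely combinatorially: a flat $B\in[a_0,\dots,a_k;\delta]$ meeting $C$ at a point $b\in B(c_j,\delta_{c_j})$ would express $b$ as an affine combination of points $b_i\in B(a_i,\delta_{c_j})$, violating the affine independence of the family $\{B(c_j,\delta_{c_j}),B(a_0,\delta_{c_j}),\dots,B(a_k,\delta_{c_j})\}$. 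The paper's route is softer (its only analytic input is Lemma~\ref{l:fam afin indep}) and uses compactness of $C$ through a covering argument; your route uses compactness only through boundedness and the positivity of $d(F,C)$, at the price of the quantitative eigenvalue estimate, but in exchange it gives explicit control of how small $\varepsilon$ must be chosen. Both arguments are valid in an arbitrary Hilbert space.
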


\begin{proof}

Since the lower Vietoris topology is always weaker than the Fell topology, we only need to prove that for every compact set $C\subset H$, the sub-basic set $(H\setminus C)^{+}$ belongs to $\tau_{LV}$.
In order to do that, let $A\subset (H\setminus C)$ be a $k$-dimensional flat and pick $\{a_0, \dots ,a_k\}\subset A$ a set of affinely independent points. 

Since $A$ is a flat and $A\cap C=\emptyset$,  we infer that the set $\{c,a_0,\dots, a_k\}$ is affinely independent for every $c\in C$. Thus, we can use Lemma~\ref{l:fam afin indep} to find, for each $c\in C$, a positive value $\delta_c>0$ such that the collection $$\{B(c, \delta_c), B(a_0, \delta_c),  \ldots, B(a_k, \delta_c)\}$$ is affinely independent.

Now, since $C$ is compact, there exist $c_1, c_2, \dots, c_m\in C$ such that
$$C\subset \bigcup_{i=1}^mB(c_i, \delta_{c_i}).$$
 Let $\delta=\min\{\delta_{c_1},\dots, \delta_{c_m}\}$ and  consider the set
$$ \mathcal{U} := \bigcap_{i=0}^{k} B(a_i, \delta)^-=[a_0,\dots a_k; \delta].$$
Clearly $\mathcal{U}$ is an open neighborhood of $A$ with respect to the lower Vietoris topology.
Let us prove that $\mathcal{U}\subset (H\setminus C)^+$. If $B\in \mathcal{U}$ is a $k$-dimensional flat with $B\cap C\neq\emptyset$, we can find a point $b\in B\cap C$ and $j\in\{1,\dots ,m\}$ such that $b\in B(c_j, \delta_{c_j})$.
Furthermore, since $B\in \mathcal{U}$, for every $i=0,\dots, k$ there exists a point $b_i\in B(a_i, \delta)\cap B\subset B(a_i, \delta_{c_j})\cap B$. Since the set $\{b_0,\dots ,b_k\}$ is affinely independent and $B$ is a $k$-dimensional flat, we also have that 
$$b\in B=\Aff(\{b_0,\dots ,b_k\}).$$

This contradicts that $\{B(c_j, \delta_{c_j}), B(a_0, \delta_{c_j}), \dots, B(a_k, \delta_{c_j})\}$ 
is an affinely independent family. 
Hence, we can conclude that $B\cap C=\emptyset$ and therefore $\mathcal U\subset (H\setminus C)^+$. Namely, the Fell topology on $\ag_k(H)$ is weaker than the lower Vietoris topology, and therefore both topologies coincide on $\ag_k(H)$, as desired. 

\end{proof}

\subsection{Metric topologies for the Grassmann manifolds}

A natural way to endow $\Gr_k(H)$ and $\ag_k(H)$ with a metric topology is through the Attouch-Wets metric defined previously (see equation~(\ref{eq:dAW})). Since in $\mathcal K^n$ the Fell topology and the one induced by the Attouch-Wets metric coincide, after Theorem~\ref{t:lower and Fell coincide}, we infer that the lower Vietoris topology and the Attouch-Wets metric topology coincide in $\ag_k(n)$.  Surprisingly, this also happens in the infinite-dimensional case.
To see this, we will need the following technical lemma.

\begin{lemma}\label{l: Bola contenida enc asco convexo}
Let $F\in \ag_k(H)$ and $ \{a_0,\dots ,a_k\}\subset F$ be an affinely independent set. Assume that $F\cap B(a,3M)$ is completely contained in the relative interior of $\cco \{a_0,\dots,a_k\}$, where $a\in F$ and $M>0$. Take $G\in [a_0,\dots, a_k; M]$  and $(b_0,\dots, b_k)\in\prod_{i=0}^k B(a_i, M)\cap G$.   If 
$$b\in B(a,M)\cap \cco\{b_0,\dots,b_k\},$$ then $B(b, M)\cap G\subset \cco^\circ\{b_0,\dots,b_k\}$.
\end{lemma}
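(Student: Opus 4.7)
The plan is to compare points of $G$ with their affine shadows in $F$ under the correspondence $b_i\leftrightarrow a_i$, using Remark~\ref{r:distancia entre combinaciones convexas} to transfer distances and exploiting the $3M$-margin of the hypothesis to trap shadows inside $\cco^\circ\{a_0,\dots,a_k\}$. First I would fix any representation $b=\sum_{i=0}^k\lambda_i b_i$ with $\lambda_i\geq 0$, $\sum\lambda_i=1$, and set $a':=\sum\lambda_i a_i\in F$. By Remark~\ref{r:distancia entre combinaciones convexas}, $\|a'-b\|<M$, so together with $\|b-a\|<M$ one obtains $a'\in B(a,2M)\cap F\subset\cco^\circ\{a_0,\dots,a_k\}$; uniqueness of barycentric coordinates in the affinely independent family $\{a_i\}$ then forces $\lambda_i>0$ for every $i$.

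Next I would establish that $\{b_0,\dots,b_k\}$ is itself affinely independent, so that $G=\Aff\{b_0,\dots,b_k\}$ and the notation $\cco^\circ\{b_0,\dots,b_k\}$ in the conclusion is well defined. Suppose instead that there exist scalars $(\beta_i)$, not all zero, with $\sum\beta_i=0$ and $\sum\beta_i b_i=0$. The perturbed coefficients $\lambda_i(s):=\lambda_i+s\beta_i$ also satisfy $\sum\lambda_i(s)=1$ and $\sum\lambda_i(s)b_i=b$ for every $s\in\mathbb R$. Since $\sum\beta_i=0$ forces the $\beta_i$ to have both signs, there is a maximal open interval $(-s_-,s_+)$ with $0<s_\pm<\infty$ on which every $\lambda_i(s)>0$; at $s=-s_-$ some $\lambda_j(-s_-)=0$ while all others stay $\geq 0$. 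Applying Remark~\ref{r:distancia entre combinaciones convexas} to $x(-s_-):=\sum\lambda_i(-s_-)a_i$ yields $\|x(-s_-)-b\|<M$, hence $\|x(-s_-)-a\|<2M$, so $x(-s_-)\in B(a,3M)\cap F\subset\cco^\circ\{a_0,\dots,a_k\}$, contradicting $\lambda_j(-s_-)=0$ by uniqueness of barycentric coordinates.

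For the main conclusion, take $g\in B(b,M)\cap G$ and write $g=\sum\mu_i b_i$ uniquely with $\sum\mu_i=1$. If some $\mu_{j_0}\leq 0$, the segment $\gamma(t)=(1-t)b+tg$ lies in $B(b,M)\cap G$ by convexity, and its coordinates $\nu_i(t)=(1-t)\lambda_i+t\mu_i$ are linear in $t$, positive at $t=0$, with $\nu_{j_0}(1)\leq 0$. Hence there is a first $t_1\in(0,1]$ at which some $\nu_j(t_1)=0$ and all $\nu_i(t_1)\geq 0$. Setting $g_1:=\gamma(t_1)$ and $y:=\sum\nu_i(t_1)a_i$, Remark~\ref{r:distancia entre combinaciones convexas} gives $\|y-g_1\|<M$; combined with $\|g_1-b\|<M$ and $\|b-a\|<M$ this yields $\|y-a\|<3M$, so $y\in B(a,3M)\cap F\subset\cco^\circ\{a_0,\dots,a_k\}$, contradicting $\nu_j(t_1)=0$. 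Therefore all $\mu_i>0$ and $g\in\cco^\circ\{b_0,\dots,b_k\}$. The main obstacle I expect is Step~2: the conclusion tacitly requires $\{b_i\}$ to be affinely independent, and recovering this from the hypotheses is exactly where the slack between $2M$ (from two triangle inequalities) and the available $3M$ is consumed.
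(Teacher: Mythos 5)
Your proof is correct, and it runs on the same engine as the paper's: transfer a convex combination having a vanishing coefficient from $\{b_0,\dots,b_k\}$ to the corresponding combination of $\{a_0,\dots,a_k\}$ via Remark~\ref{r:distancia entre combinaciones convexas}, and use a triangle inequality totalling less than $3M$ to land a relative-border point of $\cco\{a_0,\dots,a_k\}$ inside $F\cap B(a,3M)$, contradicting the hypothesis. The difference lies in how the vanishing coefficient is produced. The paper does it in one shot: assuming $B(b,M)\cap G\not\subset\cco^\circ\{b_0,\dots,b_k\}$, it extracts a point $z\in B(b,M)$ on the relative border of $\cco\{b_0,\dots,b_k\}$; that extraction tacitly uses that $B(b,M)\cap G$ is a connected subset of $G=\Aff\{b_0,\dots,b_k\}$ meeting both $\cco\{b_0,\dots,b_k\}$ and the complement of its relative interior, and hence tacitly presupposes exactly what you isolate in your second step, namely that $\{b_0,\dots,b_k\}$ is affinely independent (equivalently that $\Aff\{b_0,\dots,b_k\}=G$). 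You instead prove this affine independence first (your perturbation $\lambda_i(s)=\lambda_i+s\beta_i$ is sound and, as you note, only spends $2M$ of the available margin), and then obtain the vanishing coefficient by following barycentric coordinates along the segment from $b$ to an arbitrary $g\in B(b,M)\cap G$, which spends the full $3M$. So your route is somewhat longer, replacing the paper's single application of the $3M$-transfer by three, but it makes explicit a detail the published proof leaves implicit and avoids any appeal to connectedness; the paper's route buys brevity at the cost of that tacit step.
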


\begin{proof}
  Assume that $B(b, M)\cap G\not \subset \cco^\circ\{b_0,\dots,b_k\}$. Since $b\in \cco\{b_0,\dots,b_k\}\cap B(b,M)\neq\emptyset$,  we can find an element $z\in B(b, M)$  in the relative border of $\cco\{b_0,\dots,b_k\}$. Thus, there exists $(\lambda_0,\dots,\lambda_k)\in \prod_{i=0}^k[0,1]$ such that $z=\sum_{i=0}^{k}\lambda_i b_i$ with at least one $\lambda_j=0$ and $\sum_{i=0}^{k}\lambda_i=1$. 

Hence, $\alpha:=\sum_{i=0}^{k}\lambda_i a_i$ lies on the relative border of $\cco \{a_0,\dots ,a_{k}\}$. However, by the triangle inequality and Remark~\ref{r:distancia entre combinaciones convexas}
\begin{align*}
    \|\alpha-a\|&\leq \|\alpha-z\|+\|z-b\|+\|b-a\|\\
    &<M+M+M=3M.
\end{align*}
  By our hypotheses on $M$ and $a$, we conclude that $\alpha$ lies in the relative interior of $\cco \{a_0,\dots a_{k}\}$, a contradiction.
\end{proof}

\begin{theorem}\label{t:lower y AW coincide}
       In $\ag_k(H)$, the lower Vietoris topology and the Attouch-Wets topology coincide. 
\end{theorem}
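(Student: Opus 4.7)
The proof establishes the two inclusions of topologies. For $\tau_{LV}\subseteq\tau_{AW}$, take a subbasic set $U^-$ (with $U\subseteq H$ open) and $F\in U^-\cap\ag_k(H)$; pick $a\in F\cap U$ and $\eta>0$ with $B(a,\eta)\subseteq U$, and then $j\in\mathbb N$ with $\|a\|<j$ and $1/j\leq\eta$. Lemma~\ref{lem:Lemma-Ananda}(1) forces $d(a,G)<\eta$ whenever $d_{AW}(F,G)<1/j$, so $G\in U^-$; thus $U^-$ is $\tau_{AW}$-open.

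For the substantive direction $\tau_{AW}\subseteq\tau_{LV}$, fix $F\in\ag_k(H)$ and $\varepsilon>0$, and pick $j\in\mathbb N$ with $\varepsilon\in(1/(j+1),1/j]$. By Lemma~\ref{lem:Lemma-Ananda}(2), the aim is to produce an LV-neighborhood $\mathcal V$ of $F$ such that $\sup_{\|x\|<j}|d(x,F)-d(x,G)|<\varepsilon$ for every $G\in\mathcal V$. Let $a:=p(F)$; since $\pi_F$ is non-expansive, $\pi_F(x)\in F\cap\overline{B(a,j)}$ whenever $\|x\|<j$. Fix $0<M<\varepsilon$ and choose affinely independent $a_0,\dots,a_k\in F$ spanning $F$ so that $\cco\{a_0,\dots,a_k\}$ contains $F\cap\overline{B(a,R)}$ in its relative interior, for $R:=3j+2\|a\|+5M$; such a simplex exists because $F$ is a $k$-flat. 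Set $\mathcal V:=[a_0,\dots,a_k;M]$, shrinking $M$ via Lemma~\ref{l:fam afin indep} if necessary so that every $b_i\in G\cap B(a_i,M)$ (for $G\in\mathcal V\cap\ag_k(H)$) gives an affinely independent set, whence $G=\Aff\{b_0,\dots,b_k\}$.

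Fix $\|x\|<j$ and $G\in\mathcal V$. Write $\pi_F(x)=\sum\lambda_i a_i$ in barycentric coordinates (legitimate since $\pi_F(x)\in\cco^\circ\{a_0,\dots,a_k\}$ by the choice of $R$) and set $\tilde y:=\sum\lambda_i b_i\in\cco\{b_0,\dots,b_k\}\subseteq G$. Remark~\ref{r:distancia entre combinaciones convexas} gives $\|\tilde y-\pi_F(x)\|<M$, so the triangle inequality yields the upper bound $d(x,G)\leq\|x-\tilde y\|<d(x,F)+M$. The reverse estimate rests on the key claim that $\pi_G(x)\in\cco^\circ\{b_0,\dots,b_k\}$. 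This follows from a multi-radius variant of Lemma~\ref{l: Bola contenida enc asco convexo}, where the three radii $M$ (vertex perturbation), $M_1:=2(j+\|a\|)+2M$ (ball around $\tilde y$), and $M$ (distance $\|\tilde y-\pi_F(x)\|$) replace the single $M$ of the original statement; the same cancellation argument applies because $2M+M_1<R-j$ ensures $F\cap B(\pi_F(x),2M+M_1)\subseteq\cco^\circ\{a_0,\dots,a_k\}$. Since $\|\pi_G(x)-\tilde y\|\leq d(x,G)+\|x-\tilde y\|<2(d(x,F)+M)<M_1$, the claim follows. Writing $\pi_G(x)=\sum\nu_i b_i$ with $\nu_i\in(0,1)$ and $z:=\sum\nu_i a_i\in\cco\{a_0,\dots,a_k\}\subseteq F$, Remark~\ref{r:distancia entre combinaciones convexas} yields $\|z-\pi_G(x)\|<M$, hence $d(x,F)\leq\|x-z\|<d(x,G)+M$.

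Combining, $|d(x,F)-d(x,G)|<M<\varepsilon$ for every $\|x\|<j$, as required. The main obstacle is the reverse inequality: the nearest point $\pi_G(x)$ can lie at distance of order $j+\|a\|$ from $\tilde y$ in $G$, far larger than $M$, so the single-radius Lemma~\ref{l: Bola contenida enc asco convexo} does not directly conclude $\pi_G(x)\in\cco\{b_0,\dots,b_k\}$. The remedy is to choose the $F$-simplex with radius $R$ large enough to absorb this worst-case displacement, together with the (immediate) multi-radius extension of Lemma~\ref{l: Bola contenida enc asco convexo}.
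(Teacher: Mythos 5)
Your proof is correct and takes essentially the same route as the paper: transport points between $F$ and a nearby $G$ via barycentric coordinates of a large simplex $\cco\{a_0,\dots,a_k\}\subset F$ whose vertices are perturbed into $G$ (Remark~\ref{r:distancia entre combinaciones convexas}), with Lemma~\ref{l: Bola contenida enc asco convexo} guaranteeing that the relevant points of $G$ stay inside $\cco\{b_0,\dots,b_k\}$ so they can be pulled back to $F$. The only difference is one of implementation: you work with the exact projections $\pi_F(x)$, $\pi_G(x)$ and therefore need the (routine) multi-radius variant of Lemma~\ref{l: Bola contenida enc asco convexo} centered at $\pi_F(x)$ --- your constants $R=3j+2\|a\|+5M$ and $M_1=2(j+\|a\|)+2M$ do make its hypothesis hold --- whereas the paper uses approximate nearest points inside $B(0,L)$ (via Lemma~\ref{l:distancia igual a distancia en interseccion}) and calibrates $M=L+\|a\|+1/j$ so that the lemma applies verbatim.
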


\begin{proof}
Since the lower Vietoris topology is always weaker than the one generated by the Attouch-Wets metric (see, e.g., \cite[Proposition 3.1.5]{Beer1993}), we only need to prove that the identity map $1_{\ag_k(H)}: (\ag_k(H),\tau_{LV})\to (\ag_k(H),d_{AW})$ is continuous.
Let $F\in \ag_k(H)$ and $\varepsilon >0$. 
Choose a natural number $j\in\mathbb N$ such that $1/j\leq \varepsilon $ 

Denote $a:=p(F)$ (where $p$ is the map defined in equation~(\ref{eq:p})) and choose $L>2j+\|a\|+1/j$. Define $M:=L+\|a\|+1/j$ and pick an affine independent set $\{a_0,\dots,a_k\}\subset F$ such that
$$B(a,3M)\cap F\subset \cco^\circ\{a_0,\dots, a_k\}.$$
Hence, we can find positive scalars $\mu_i\in (0,1)$, $i=0,\dots, k$ satisfying the following equations
$$a=\sum_{i=0}^k\mu_ia_i\quad\text{ and }\quad\sum_{i=0}^k\mu_i=1.$$
Finally, pick $\delta\in (0,1/j)$ small enough such that $\{B(a_i,\delta)\}_{i=0}^k$ is an affinely independent family. 
Hence $\mathcal{O}:=[a_0,\dots,a_k;\delta]$ is an open neighborhood of $F$ with respect to the lower Vietoris topology on $\ag_{k}(F)$.
Let $G\in \mathcal O$. 
To prove the proposition, we will show that $d_{AW}(F,G)<\varepsilon$. To do this, pick elements $b_i\in G\cap B(a_i, \delta)$, $i=0,\dots, k$, and define $b:=\sum_{i=0}^k\mu_ib_i$. Hence $b\in \cco^\circ \{b_0,\dots, b_k\}$ and, by Remark~\ref{r:distancia entre combinaciones convexas},  $\|a-b\|<\delta<M$. These two conditions allow us to use Lemma~\ref{l: Bola contenida enc asco convexo} and conclude that
$$B(b,M)\cap G\subset \cco^\circ\{b_0,\dots,b_k\}.$$

Let $x\in B(0,j)$. Since $L>2j+\|a\|+1/j>2j+\|a\|=2j+d(0,F)$, by Lemma~\ref{l:distancia igual a distancia en interseccion} we know that $d(x,F)=d(x, F\cap B(0,L)).$ Hence, for every $\eta>0$ we can find $z\in F\cap B(0,L)$ such that 
\begin{equation}\label{d:distancia de e a z}
    \|x-z\|<d(x,F\cap B(0,L))+\eta =d(x,F)+\eta.
\end{equation}

Notice that $\|z-a\|\leq \|z\|+\|a\|<L+\|a\|<M$ and therefore $z$ lies in the relative interior of $\cco\{a_0,\dots a_k\}$. Thus, we can find scalars $t_i\in (0,1)$, $i=0,\dots ,k$ with $\sum_{i=0}^kt_i=1$ and $z=\sum_{i=0}^kt_i a_i$. By Remark~\ref{r:distancia entre combinaciones convexas}, the point $z':=\sum_{i=0}^kt_ib_i$ lies in $B(z,\delta)$. Furthermore,  since $z'\in G$ we can use inequality~(\ref{d:distancia de e a z}) to obtain 
\begin{align*}
    d(x,G)\leq \|x-z'\|\leq \|x-z\|+\|z-z'\|<d(x,F)+\eta+\delta.
\end{align*}
Since the last inequality holds for every $\eta>0$, we infer that 
\begin{equation}\label{e: desigualdad valor absoluto 1}
    d(x,G)-d(x,F)\leq \delta.
\end{equation}

On the other hand, since $\|b\|-\|a\|\leq\|b-a\|<\delta$ and $b\in G$, we can deduce that 
\begin{equation}\label{eq:desigualdad punto cercano a G y b}
    d(0,G)=\|p(G)\|\leq\|b\|<\|a\|+\delta
\end{equation}
Thus, $L>2j+\|a\|+1/j>2j+\|a\|+\delta>2j+d(0, G)$, and  by Lemma~\ref{l:distancia igual a distancia en interseccion}, $d(x,G)=d(x, G\cap B(0,L)).$ Then, for an arbitrary $\eta>0$, we can  find a point $y\in G\cap B(0,L)$ satisfying
\begin{equation}\label{eq: desigualdad distancia de x a G}
    d(x,y)<d(x, G\cap B(0,L))+\eta=d(x,G)+\eta.
\end{equation}

Using inequality~(\ref{eq:desigualdad punto cercano a G y b}) again, we infer that
$$\|y-b\|\leq \|y\|+\|b\|<L+\|a\|+\delta<M.$$
This, in combination with Lemma~\ref{l: Bola contenida enc asco convexo}, allows us to conclude that $y\in \cco^\circ\{b_0,\dots, b_k\}$. Hence, we can write $y=\sum_{i=0}^k\lambda_i b_i$, with $\lambda_{i}\in (0,1)$, and $\sum_{i=0}^{k}\lambda_i=1$.
Define $y':=\sum_{i=0}^k\lambda_i a_i$. Then $y'\in F$ and by inequality~(\ref{eq: desigualdad distancia de x a G}) and Remark~\ref{r:distancia entre combinaciones convexas},  we also have that
\begin{align*}
    d(x,F)&\leq d(x, y')\leq d(x,y)+d(y,y')\\
    &<d(x,G)+\eta+\delta.
\end{align*}
Since the last inequality holds for every positive $\eta$, we conclude that $d(x,F)-d(x,G)\leq \delta$.
This, in combination with inequality~(\ref{e: desigualdad valor absoluto 1}), implies that
$$|d(x,F)-d(x,G)|\leq \delta\quad \text{ for every }x\in B(0,j).$$
From here we infer that  
$$\sup_{\|x\|<j}|d(x,F)-d(x,G)|\leq \delta<1/j,$$
which together with Lemma~\ref{lem:Lemma-Ananda} implies that $$d_{AW}(F,G)<1/j\leq \varepsilon,$$ as desired. 
\end{proof}

In the particular case of $\Gr_k(H)$, there is another useful way to endow this space with a metric topology. That is,  through the Hausdorff distance between the closed unit balls of every pair of elements in $\Gr_k(H)$. More precisely, 
given $V, W$ two linear subspaces of $H$, we define
\begin{equation}\label{eq:gap}
\theta(V,W):=d_H(\mathbb B_V, \mathbb B_W),  
\end{equation}
 where $\mathbb B_V$ and $\mathbb B_W$ stand for the unit closed ball of $V$ and $W$, respectively.
The map $\theta$  defines a metric in the hyperspace of all closed subspaces of $H$ and $\theta(V, W)$ is sometimes called the \textit{gap} between $V$ and $W$. This gap, as well as some other variations of the formula~(\ref{eq:gap}), have been strongly studied by several authors (see, e.g., \cite{Berkson,G-M-Gap, Kadets, Ostrovskii}). The restriction of $\theta$ to $\Gr_k(H)$ has also been used as an easy way to equip $\Gr_k(H)$ with a metric topology (e.g. \cite{Barov}). 

It should be noted that the topology on $\Gr_k(H)$ induced by the gap $\theta$ coincides with the one given by the Attouch-Wets metric. Since we did not find in the literature a proof for this fact, we found it interesting to provide one. 

\begin{proposition}\label{p:gap y attouch-Wets}
    In $\Gr_k(H)$, the topology generated by the gap $\theta$ and the topology generated by the Attouch-Wets metric are the same. 
\end{proposition}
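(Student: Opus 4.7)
The plan is to express the Attouch-Wets metric on $\Gr_k(H)$ directly in terms of the gap $\theta$, exploiting the fact that elements of $\Gr_k(H)$ are linear subspaces (hence both contain the origin and are invariant under positive scalar multiplication). From the resulting formula, the topological equivalence will follow from two elementary inequalities.

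First, I will observe the key scaling identity: for every $V\in\Gr_k(H)$ and every $j>0$,
$$V\cap j\mathbb B=j(V\cap\mathbb B)=j\mathbb B_V,$$
since $V$ is a linear subspace. Because the Hausdorff distance is positively homogeneous ($d_H(\lambda A,\lambda B)=\lambda\, d_H(A,B)$ for $\lambda>0$), this gives
$$d_H\bigl(V\cap j\mathbb B,\;W\cap j\mathbb B\bigr)=j\,d_H(\mathbb B_V,\mathbb B_W)=j\,\theta(V,W).$$
Since $V$ and $W$ both contain $0$, Lemma~\ref{l:d_H-dAW}(4) applies and yields the clean formula
$$d_{AW}(V,W)=\sup_{j\in\mathbb N}\min\bigl\{1/j,\;j\,\theta(V,W)\bigr\}.$$

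Next, I will extract two inequalities from this identity. Using the elementary bound $\min\{a,b\}\le\sqrt{ab}$ for $a,b\ge 0$, applied with $a=1/j$ and $b=j\theta(V,W)$, I get $\min\{1/j,\,j\theta(V,W)\}\le\sqrt{\theta(V,W)}$ for every $j\in\mathbb N$, so taking the supremum produces
$$d_{AW}(V,W)\le\sqrt{\theta(V,W)}.$$
In the opposite direction, evaluating the supremum at $j=1$ gives $\min\{1,\theta(V,W)\}\le d_{AW}(V,W)$. If $d_{AW}(V,W)<1$, this forces $\theta(V,W)<1$, whence $\theta(V,W)=\min\{1,\theta(V,W)\}\le d_{AW}(V,W)$.

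These two inequalities show that the identity map between $(\Gr_k(H),\theta)$ and $(\Gr_k(H),d_{AW})$ is a homeomorphism: the first implies $\theta_m\to 0\Rightarrow d_{AW}\to 0$, and the second implies $d_{AW}\to 0\Rightarrow\theta_m\to 0$. No single step is genuinely difficult; the only subtle point is noticing the scaling identity that converts the defining sum of Lemma~\ref{l:d_H-dAW}(4) into a closed-form expression involving $\theta$. Once that is in place, the argument reduces to a couple of lines of AM-GM-type estimates. (As a by-product, the optimization $\sup_j\min\{1/j,\,j\theta\}$ shows that $d_{AW}(V,W)$ is in fact comparable to $\sqrt{\theta(V,W)}$ when $\theta(V,W)$ is small, but only the topological equivalence is needed here.)
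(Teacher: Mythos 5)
Your proof is correct. The scaling identity $V\cap j\mathbb B=j(V\cap\mathbb B)$ together with positive homogeneity of $d_H$ does give $d_H(V\cap j\mathbb B,W\cap j\mathbb B)=j\,\theta(V,W)$, and since every element of $\Gr_k(H)$ contains the origin, Lemma~\ref{l:d_H-dAW}(4) turns this into the closed formula $d_{AW}(V,W)=\sup_j\min\{1/j,\,j\theta(V,W)\}$; your two bounds $d_{AW}\leq\sqrt{\theta}$ and $\theta\leq d_{AW}$ whenever $d_{AW}<1$ then indeed yield the equivalence of the topologies. The paper reaches the same conclusion by the same underlying idea (comparing $\theta$ with $d_H(V\cap j\mathbb B,W\cap j\mathbb B)$ and invoking Lemma~\ref{l:d_H-dAW}), but it does so via two separate one-sided estimates: the inequality $\theta(V,W)\leq d_H(V\cap j\mathbb B,W\cap j\mathbb B)$ from parts (2),(3),(5) of that lemma in one direction, and in the other direction the bound $d_H(V\cap j\mathbb B,W\cap j\mathbb B)\leq j\,\theta(V,W)$ proved pointwise using the non-expansiveness of the orthogonal projection (property (P3)), packaged into an $\varepsilon$--$\delta$ argument. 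Your version is tighter and shorter: by proving the exact equality rather than two inequalities you obtain an explicit closed-form expression for $d_{AW}$ on $\Gr_k(H)$ and the quantitative comparison $\theta\leq d_{AW}\leq\sqrt{\theta}$ (for small distances), i.e.\ uniform equivalence of the two metrics rather than mere topological equivalence; the paper's projection argument, on the other hand, avoids appealing to homogeneity of $d_H$ and fits the toolkit (metric projections) it has already set up for later sections. Both arguments work in an arbitrary Hilbert space $H$.
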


\begin{proof}
Since the origin of $H$ belongs to every element of $\Gr_k(H)$, we can use Lemma~\ref{l:d_H-dAW} to infer that 
\begin{align*}
\theta(V,W)&=\sup\limits_{\|x\|<1}\{|d(x,V)-d(x,W)|\}\leq \sup\limits_{\|x\|<r}\{|d(x,V)-d(x,W)|\}\\
&=d_H(V\cap r\mathbb B, W\cap r\mathbb B),
\end{align*}
for every $r\geq 1$ and $V, W\in \Gr_k(H).$
Now, if $0<\varepsilon\leq 1$, we can find $j\in\mathbb N$ such that $\frac{1}{j+1}<\varepsilon\leq\frac{1}{j}$. Hence, if $d_{AW}(V,W)<\varepsilon$, by Lemma~\ref{l:d_H-dAW}-(5)
$$\theta(V,W)\leq d_H(V\cap j\mathbb B, W\cap j\mathbb B)<\varepsilon.$$

This proves that the identity map $1_{\Gr_k(H)}:(\Gr_k(H),d_{AW})\to (\Gr_k(H), \theta)$ is continuous.

 On the other hand, given $\varepsilon$ and $j$ as described above, choose $\delta<\frac{\varepsilon}{j}$.
Let $V, W\in \Gr_k(H)$ be such that $\theta (V, W)<\delta$. Then, for every $x\in V\cap j\mathbb B$, 
since $\pi_W$ is non-expansive (Property (P3)), $\|\pi_W(x)\|\leq \|x\|\leq j$ and $\frac{1}{j}x\in V\cap\mathbb B$. Thus 
\begin{align*}
   d(x, W\cap j\mathbb B)&=\|x-\pi_W(x)\|=\left \|\frac{j}{j}x-j\pi_W\left(\frac{1}{j}x\right)\right \|\\
   &=j\left \|\frac{1}{j}x-\pi_W\left(\frac{1}{j}x\right)\right \|\leq jd\left(\frac{1}{j}x, W\cap \mathbb B\right)\\
   &\leq j d_H(V\cap \mathbb B, W\cap \mathbb B)=j \theta (V,W)
\end{align*}

    symmetrically, we can prove that $d(x, V\cap j\mathbb B)\leq j\theta (V,W)$ for every $x\in W\cap j\mathbb B$. This allows us to conclude that
    $$d_H(V\cap j\mathbb B, W\cap j\mathbb B)\leq j\theta (V, W)<j\delta<\varepsilon.$$

    Using Lemma~\ref{l:d_H-dAW}-(5) again, we infer that $d_{AW}(V, W)<\varepsilon$, which proves the continuity of the identity map $1_{\Gr_k(H)}:(\Gr_k(H),\theta)\to (\Gr_k(H), d_{AW})$, as needed. 

\end{proof}

\begin{remark}
    After theorems~\ref{t:topologia cociente y lower coincide},\ref{t:lower and Fell coincide} and \ref{t:lower y AW coincide},  we conclude that in $\ag_{k}(H)$, the quotient topology, the lower Vietoris topology, the Fell topology and the Attouch-Wets topology coincide.  This, in combination with Proposition~\ref{p:gap y attouch-Wets}, yields that in $\Gr_k(H)$, the same four topologies coincide with the one induced by the gap (formula~(\ref{eq:gap})).
     Thus, throughout the rest of the paper, we will use these topologies interchangeably,  as needed.
\end{remark}

For any closed linear subspace $W$ of a Hilbert space $H$, let us denote by $W^\perp$ its orthogonal complement.

\begin{remark}
If $V, W$ are two closed linear subspaces of $H$, the gap between $V$ and $W$ can be expressed as 
\begin{equation}\label{eq: formula para ga y operator norm}
    \theta(V, W)=\max\{\|\pi_{V^\perp}\circ\pi_{W}\|, \|\pi_{W^\perp}\circ\pi_{V}\|\}, 
\end{equation}
    where $\|\pi_{V^\perp}\circ\pi_{W}\|$ and $\|\pi_{W^\perp}\circ\pi_{V}\|$ stand for the operator norm of $\pi_{V^\perp}\circ\pi_{W}$ and $\pi_{W^\perp}\circ\pi_{V}$, respectively
\end{remark}

\begin{proof}
Let $V$ and $W$ be two closed linear subspaces of $H$. If $x\in V\cap \mathbb B$, by Property (P3)  $\|\pi_W(x)\|\leq\|x\|\leq 1$ and therefore $\pi_{W\cap \mathbb B}(x)=\pi_{W}(x)$. This, in combination with the fact that $\pi_W(x)+\pi_{W^\perp}(x)=x$, implies that
$$d(x, W\cap B)=\|x-\pi_{W}(x)\|=\|\pi_{W^\perp}(x)\|.$$
Since $V\cap\mathbb B=\pi_{V}(\mathbb B)$, we get
\begin{align*}
    \sup\limits_{x\in V\cap\mathbb B} \{d(x, W\cap \mathbb B)\}&=\sup_{x\in V\cap \mathbb B} \{\|\pi_{W^\perp}(x)\|\}= \sup_{x\in \mathbb B} \{\|\pi_{W^\perp}\circ \pi_V(x)\|\}\\
    &=\|\pi_{W^\perp}\circ \pi_V\|.
\end{align*}

Symmetrically, we can prove that
$$\sup\limits_{x\in W\cap\mathbb B} \{d(x, V\cap \mathbb B)\}=\|\pi_{V^\perp}\circ \pi_W\|,$$
which implies formula~(\ref{eq: formula para ga y operator norm}).
\end{proof}

It is well known that the Grassmann manifolds $\Gr_k(n)$ and $\Gr_{n-k}(n)$ are homeomorphic to each other. This result is usually proved using the fact that
$$\Gr_k(n)\cong O(n)/\big(O(k)\times O(n-k)\big)$$
(see, e.g. \cite[\S 8.4]{Baker}).
However, following the more synthetic aim of this paper, this relation can be easily proved by using the following remark.

\begin{remark} \label{p: orthogonal complent es isometria}
    Let $V$ and $W$ be two closed subspaces of the Hilbert space $H$. Then 
$$\theta(V,W)=\theta(V^\perp,W^\perp).$$
    In particular, the map $(\Gr_k(n), \theta)\to (\Gr_{n-k}(n),\theta)$ given by $W\to W^{\perp}$ is a surjective isometry.
\end{remark}

\begin{proof}
    Since the orthogonal projection is a self-adjoint operator (see e.g. \cite[Proposition 15.47]{Fabian et al}), we clearly have that
    $$\|\pi_{V^\perp}\circ\pi_{W}\|=\|(\pi_{V^\perp}\circ\pi_{W})^*\|=\|\pi_{W}^*\circ\pi_{V^\perp}^*\|=\|\pi_{W}\circ\pi_{V^\perp}\|.$$
    Analogously, $\|\pi_{W^\perp}\circ\pi_{V}\|=\|\pi_{V}\circ\pi_{W^\perp}\|$, which in combination with 
 formula~(\ref{eq: formula para ga y operator norm}) implies the equality $\theta(V,W)=\theta(V^\perp,W^\perp).$
\end{proof}

\section{$\ag_k(n)$ as a fiber bundle over $\Gr_k(n)$}\label{s: fiber bundle of grasmanians}

For every flat $F\in\ag_k(H)$, let $q(F)\in \Gr (H)$ be defined as 
\begin{equation}\label{eq:traslacion de F}
    q(F):=F-p(F)=\{x-p(F): x\in F\},
\end{equation}
where $p(F)$ is the point in $F$ nearest to the origin (equation~(\ref{eq:p})). Namely, $q(F)$ is the subspace of $H$ parallel to $F$.

\begin{lemma}\label{l: q is continuo}
    The map $q:\ag_k(H)\to \Gr_k(H)$ defined in equation (\ref{eq:traslacion de F})
    is a retraction.
\end{lemma}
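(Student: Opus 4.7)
The plan is to verify two things: first, that $q$ set-theoretically sends $\ag_k(H)$ into $\Gr_k(H)$ and restricts to the identity on $\Gr_k(H)$; and second, that $q$ is continuous. The first part is essentially algebraic; the second will follow from assembling continuous pieces already established in the paper.

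For the retraction property, given $F\in\ag_k(H)$ the set $q(F)=F-p(F)$ is the translate of a $k$-dimensional flat, hence itself a $k$-dimensional flat; since $p(F)\in F$, the origin $0=p(F)-p(F)$ belongs to $q(F)$, so $q(F)$ is a $k$-dimensional linear subspace and thus an element of $\Gr_k(H)$. On the other hand, if $V\in\Gr_k(H)$, then $0\in V$, so $p(V)=\pi_V(0)=0$ and $q(V)=V$. Hence $q|_{\Gr_k(H)}=\mathrm{id}_{\Gr_k(H)}$.

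For continuity, I would factor $q$ as the composition
$$F\ \longmapsto\ \bigl(F,\{-p(F)\}\bigr)\ \longmapsto\ F+\{-p(F)\}\ =\ q(F).$$
By Lemma \ref{l:continuidad funcion p}, the map $p\colon(\mathcal{K}^H,d_{AW})\to H$ is continuous, and by Theorem \ref{t:lower y AW coincide} the Attouch-Wets and lower Vietoris topologies coincide on $\ag_k(H)$, so $p\colon(\ag_k(H),\tau_{LV})\to H$ is continuous. The assignment $x\mapsto\{-x\}$ from $H$ to $(\mathcal{K}^H,\tau_{LV})$ is continuous because the preimage of a sub-basic set $U^-$ is the open set $-U\subset H$. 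Combining these, the map $F\mapsto(F,\{-p(F)\})$ into $(\mathcal{K}^H,\tau_{LV})\times(\mathcal{K}^H,\tau_{LV})$ is continuous, and Proposition \ref{p: basic properties}(1) then gives continuity of the Minkowski sum, hence of $q\colon(\ag_k(H),\tau_{LV})\to(\mathcal{K}^H,\tau_{LV})$. Since we already showed $q(F)\in\Gr_k(H)$ and the topology we use on $\Gr_k(H)$ is the subspace lower Vietoris topology, we obtain continuity of $q\colon\ag_k(H)\to\Gr_k(H)$.

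The only subtle point is keeping track of which topology is in play at each stage: continuity of $p$ is stated for the Attouch-Wets metric, whereas the continuity of the Minkowski sum on a general Hilbert space is only available in the lower Vietoris topology. The bridge between the two is exactly Theorem \ref{t:lower y AW coincide}, which is why it is convenient to work inside $\ag_k(H)$ rather than in all of $\mathcal{K}^H$. Beyond this bookkeeping, no serious obstacle is expected.
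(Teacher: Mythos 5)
Your proposal is correct and follows essentially the same route as the paper: both reduce the retraction property to $p(V)=\pi_V(0)=0$ for $V\in\Gr_k(H)$, and both obtain continuity by combining the continuity of $p$ (Lemma~\ref{l:continuidad funcion p}), the coincidence of the Attouch--Wets and lower Vietoris topologies on $\ag_k(H)$ (Theorem~\ref{t:lower y AW coincide}), and the continuity of Minkowski addition in $\tau_{LV}$ (Proposition~\ref{p: basic properties}(1)). The only cosmetic difference is that you verify the continuity of $x\mapsto\{-x\}$ directly, where the paper invokes Fact~\ref{f: H homeomorfo a los singuletes}.
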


 \begin{proof} If $W\in \Gr_k(H)$,
 $p(W)=0$ and therefore $q(W)=W-p(W)=W$. Thus, in order to prove that $q$ is a retraction, we only need to prove the continuity of $q$.

Since the Attouch-Wets topology in $\ag_k(H)$ coincides with the lower Vietoris topology, by
Lemma~\ref{l:continuidad funcion p} and Fact~\ref{f: H homeomorfo a los singuletes},  the map $\ag_k(H)\to (\mathcal K^H_0, \tau_{LV})$ given by $F\to \{p(F)\}$ is continuous. This, in combination with Proposition~\ref{p: basic properties}-(1), implies that $q$ is continuous. 
 
 \end{proof}

The topology of $\ag_k(n)$ can be better understood using the fact that $q:\ag_k(n)\to\Gr_k(n) $ defines a fiber bundle structure. Let us recall what this means.

A \textit{fiber bundle} is a tuple $\zeta=(E,B, F, p)$ where 
$E, B$ and $F$, are topological spaces,  $p:E\to B$ is a continuous surjective function (called the \textit{bundle projection}) and  there exists an open covering $\mathcal U$ of $B$
 such that for every $U\in\mathcal U$ there is a homeomorphism $\varphi_U:U\times F\to p^{-1}(U)$ satisfying the following equality:
 $$p(\varphi_U(b,t))=b,\quad\text{for every }(b, t)\in U\times F.$$
 
 Notice that this equality implies that the \textit{fiber over} $b$, $p^{-1}(b)$, is homeomorphic with $F$ for every $b\in B$. In this case $B$ is called the \textit{base space}, $E$ the $\textit{total space}$ and $F$ the \textit{fiber} of the bundle. 

If the fiber $F$ is homeomorphic with a vector space, we obtain the common notion of a \textit{vector bundle}, or an $n$-\textit{plane bundle} in the case $F\cong \mathbb R^{n}$.

We refer the reader to \cite{Spanier} and \cite{Hatcher} (see also \cite{Steenrod}) for more information about fiber bundles.

We did not find in the literature a formal proof that $q:\ag_k(n)\to\Gr_k(n) $ defines a vector bundle projection. Since we will use this fact as well as some constructions made in its proof,  in this section we will provide a complete proof of it.

For every $V\in \Gr_k(H)$, let $\pi_V: H\to V$ be the orthogonal projection onto $V$ (see Subsection~\ref{Subsection metric projections}) and define the following subset of $\Gr_k(H)$:
\begin{equation}\label{eq: definicion abiertos en Gr}
    \widetilde V:=\{W\in \Gr_k(H):\pi_V(W)=V\}\subset \Gr_k(H).
\end{equation}

\begin{theorem}\label{t: W tilde es abierto}
    The family $\{\widetilde V\}_{V\in \Gr_k(H)}$ is an open cover of $\Gr_k(H)$.
\end{theorem}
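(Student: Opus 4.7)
The plan has two pieces: showing the family actually covers $\Gr_k(H)$, and showing each $\widetilde V$ is open. The covering property is immediate: for any $W\in\Gr_k(H)$ the orthogonal projection $\pi_W$ fixes every point of $W$, so $\pi_W(W)=W$ and hence $W\in\widetilde W$.

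For openness, by the equivalence of topologies on $\Gr_k(H)$ established in Theorem~\ref{t:lower and Fell coincide}, Theorem~\ref{t:lower y AW coincide} and Proposition~\ref{p:gap y attouch-Wets}, it suffices to work in the lower Vietoris topology. Fix $W\in\widetilde V$. Because $\pi_V(W)=V$ and $\dim W=\dim V=k$, the restriction $\pi_V|_W:W\to V$ is a linear isomorphism. Choose an orthonormal basis $v_1,\dots,v_k$ of $V$ and set $w_i:=(\pi_V|_W)^{-1}(v_i)\in W$, so that $\pi_V(w_i)=v_i$ for each $i$. By the linear analogue of Lemma~\ref{l:fam afin indep} (cf.\ \cite[Lema 7.1]{Resende-Santos}), pick $\delta>0$ such that $\{B(v_1,\delta),\dots,B(v_k,\delta)\}$ is a linearly independent family of open sets in $H$.

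Now consider the basic lower Vietoris neighborhood $\mathcal{U}:=[w_1,\dots,w_k;\delta]\cap\Gr_k(H)$, which contains $W$. For an arbitrary $W'\in\mathcal{U}$, select $w_i'\in W'\cap B(w_i,\delta)$. By the non-expansiveness of $\pi_V$ (Property~(P3) of Subsection~\ref{Subsection metric projections}),
\[
\|\pi_V(w_i')-v_i\|=\|\pi_V(w_i')-\pi_V(w_i)\|\leq \|w_i'-w_i\|<\delta,
\]
so each $\pi_V(w_i')$ lies in $B(v_i,\delta)$. By the linear independence of the chosen family, the vectors $\pi_V(w_1'),\dots,\pi_V(w_k')$ are linearly independent in $V$; since $\dim V=k$, they span $V$. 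Therefore $V\subseteq \pi_V(W')\subseteq V$, giving $\pi_V(W')=V$ and hence $W'\in\widetilde V$. This shows $\mathcal{U}\subseteq \widetilde V$, proving $\widetilde V$ is open.

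The only nontrivial ingredient is having the linearly independent version of Lemma~\ref{l:fam afin indep} available; once that is in hand, the rest is a brief continuity argument built on the non-expansive character of the orthogonal projection and a dimension count inside $V$.
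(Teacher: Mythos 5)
Your proof is correct and takes essentially the same approach as the paper's: both surround a basis $v_1,\dots,v_k$ of $V$ by a linearly independent family of balls (via \cite[Lemma 7.1]{Resende-Santos}), take points $w_i\in W$ with $\pi_V(w_i)=v_i$, and show that any $W'$ in the lower Vietoris neighborhood $[w_1,\dots,w_k;\delta]$ has $\pi_V(W')=V$ by a dimension count on the projected points. The only (cosmetic) difference is that you reuse the single radius $\delta$ via the non-expansiveness (P3) of $\pi_V$, whereas the paper invokes continuity of $\pi_V$ to pick a second radius around the $w_i$.
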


\begin{proof}
Observe that $V\in\widetilde V$ for every $V\in \Gr_k(H)$. Therefore, $\{\widetilde V\}_{V\in \Gr_k(H)}$ is a cover.  Let $V\in \Gr_k(H)$, and let us prove that $\widetilde V$ is open. 
For that purpose, pick $v_1,\dots,v_k\in V$ linearly independent points. By \cite[Lemma 7.1]{Resende-Santos} there exists $\varepsilon>0$ such that $\{B(v_i,\varepsilon)\}_{i=1}^k$ is a linearly independent collection. If $W\in \widetilde V$, we can use the fact that $\pi_V(W)=V$ to find linearly independent vectors $w_1,\dots,w_k\in W$ such that $\pi_V(w_i)=v_i$. Furthermore, by the continuity of $\pi_V$, there exist $\delta>0$ small enough such that
\begin{enumerate}
    \item $\{B(w_i, \delta)\}_{i=1}^k$ is a linearly independent family and 
    \item $\pi_V\big(B(w_i,\delta)\big)\subset B(v_i, \varepsilon)\cap V$
for every $i=1,\dots, k$.
\end{enumerate}
Now, for every $U\in[w_1,\dots,w_k; \delta]$, let us pick points $u_i\in B(w_i,\delta)$, $1\leq i\leq k$. Thus, the set $\{u_1,\dots, u_k\}$ is linearly independent and $\pi_V(u_i)\in B(v_i, \varepsilon)$ for every $i=1,\dots , k$. By the choice of $\varepsilon$, this implies that $\{\pi_V(u_1),\dots ,\pi_V(u_k)\}$ is a base for $V$ and using the fact that $\pi_V$ is a linear map, we conclude that $\pi_V(U)=V$. Therefore,
$$W\in [w_1,\dots, w_k; \delta]\subset \widetilde V,$$
which proves that $\widetilde V$ is open for every $V\in \Gr_k(H)$.

\end{proof}


\begin{proposition}
    Let $V,W\in \Gr_k(H)$. Then the open sets $\widetilde V$ and $\widetilde W$ defined in equation~(\ref{eq: definicion abiertos en Gr}) are homeomorphic to each other.
\end{proposition}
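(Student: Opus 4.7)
The plan is to construct a linear isometric bijection $T:H\to H$ with $T(V)=W$, and then check that the induced map $\Phi:\widetilde V\to\widetilde W$, $\Phi(U):=T(U)$, is a homeomorphism. Such a $T$ exists by a standard Hilbert-space argument: pick orthonormal bases $\{v_1,\dots,v_k\}$ and $\{w_1,\dots,w_k\}$ of $V$ and $W$ respectively; since $V^\perp$ and $W^\perp$ are closed complements of $k$-dimensional subspaces of $H$, they have the same Hilbert-space dimension and hence admit a surjective linear isometry $S:V^\perp\to W^\perp$. Defining $T$ to agree with $v_i\mapsto w_i$ on $V$ and with $S$ on $V^\perp$, and extending by linearity via $H=V\oplus V^\perp$, produces the required $T$, which moreover satisfies $T(V^\perp)=W^\perp$.

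The key identity is $\pi_W\circ T=T\circ\pi_V$. Indeed, for any $x\in H$ written as $x=\pi_V(x)+\pi_{V^\perp}(x)$, we have $T(x)=T(\pi_V(x))+T(\pi_{V^\perp}(x))$ with $T(\pi_V(x))\in W$ and $T(\pi_{V^\perp}(x))\in W^\perp$, so by uniqueness of orthogonal decomposition $\pi_W(T(x))=T(\pi_V(x))$. Using this, if $U\in\widetilde V$ then $\pi_W(T(U))=T(\pi_V(U))=T(V)=W$, hence $T(U)\in\widetilde W$ and $\Phi$ is well-defined. The symmetric argument applied to $T^{-1}$ (which is itself a linear isometry with $T^{-1}(W)=V$ and $T^{-1}(W^\perp)=V^\perp$) shows that $\Phi$ is a bijection with inverse $U\mapsto T^{-1}(U)$.

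For continuity, I would invoke the lower Vietoris description of the topology on $\Gr_k(H)$, which coincides with the Attouch-Wets and Fell topologies in view of Theorems~\ref{t:topologia cociente y lower coincide}, \ref{t:lower and Fell coincide} and \ref{t:lower y AW coincide}. Since $T$ is an isometry of $H$, $T(B(a,\varepsilon))=B(T(a),\varepsilon)$ for every $a\in H$ and $\varepsilon>0$, hence
$$\Phi^{-1}\bigl([a_1,\dots,a_m;\varepsilon]\cap\widetilde W\bigr)=[T^{-1}(a_1),\dots,T^{-1}(a_m);\varepsilon]\cap\widetilde V,$$
which is open in $\widetilde V$. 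The same argument applied to $T^{-1}$ gives continuity of $\Phi^{-1}$. The only substantive point in the proof is the intertwining identity $\pi_W\circ T=T\circ\pi_V$, which is what forces $T$ to be genuinely orthogonal (not merely a linear bijection sending $V$ to $W$); after that, everything reduces to the elementary fact that linear isometries of $H$ act continuously on hyperspaces equipped with any of the standard topologies used above.
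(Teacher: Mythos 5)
Your proposal is correct and takes essentially the same route as the paper: both induce the homeomorphism $U\mapsto T(U)$ from a linear automorphism $T$ of $H$ satisfying $T(V)=W$ and $T(V^\perp)=W^\perp$, check well-definedness and bijectivity via $T^{-1}$, and then handle continuity through the lower Vietoris topology. The only differences are cosmetic: the paper uses a general (not necessarily isometric) isomorphism and cites the general continuity of induced maps with respect to $\tau_{LV}$, whereas you take $T$ orthogonal and verify continuity directly on the basic sets $[a_1,\dots,a_m;\varepsilon]$; note also that the intertwining identity $\pi_W\circ T=T\circ\pi_V$ requires only $T(V)=W$ and $T(V^\perp)=W^\perp$ rather than orthogonality of $T$ --- the isometry is needed only for your particular continuity computation.
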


\begin{proof}
Let $V,W\in \Gr_k(H)$ be two arbitrary $k$-dimensional subspaces of $H$, and let $V^{\perp}$ and $W^{\perp}$ denote their corresponding orthogonal
complements. Since 
$$\text{codim}~V^{\perp}=\text{codim}~W^{\perp}=k<\infty,$$ 
and
$$V\oplus V^{\perp}=H=W\oplus W^{\perp}, $$
we can find a linear isomorphism $T:H\to H$ such that $T(V)=W$ and $T(V^{\perp})=W^{\perp}$. Define a map $\widetilde T:\widetilde V\to\widetilde W$ by
$$\widetilde T(U):=T(U)=\{T(u):u\in U\}.$$
We claim that $\widetilde T$ is a homeomorphism. Indeed,
$T$ is continuous because the induced function is always continuous with respect to the lower Vietoris topology (see, e.g., \cite[Theorem 5.10]{Michael} or \cite[Proposition 3.1-(1)]{DonJuanJonardPerezLopezPoo}). In order to prove that $\widetilde T$ is a homeomorphism, first let us prove that $\widetilde T$ is well defined. Namely, we shall prove that $\widetilde T(U)\in \widetilde W$ for every $U\in \widetilde V$. Assume the opposite is true. Then we can find $U\in\widetilde V$ such that $T(U)\notin \widetilde W$. This implies that $\pi_W\big(T(U)\big)\neq W$ and therefore there exists a non-null vector $y\in T(U)\cap W^\perp$. Pick $u\in U\setminus\{0\}$ such that $T(u)=y\in W^\perp$. Since $T$ is an isomorphism and $T(V^\perp)=W^\perp$, we obtain that
$$u=T^{-1}\big(T(u)\big)=T^{-1}(y)\in T^{-1}(W^\perp)=V^\perp.$$ Hence $\pi_V(u)=0$ and therefore $U$ cannot be in $\widetilde V$, a contradiction. Then, we conclude that $\widetilde T:\widetilde V\to\widetilde W$ is a well-defined continuous map. 
For the same reason, the linear isomorphism $T^{-1}$ induces a continuous (and well-defined) map $\widetilde{T^{-1}}:\widetilde W\to\widetilde V$. Furthermore, it is clear that 
$$\widetilde T\circ\widetilde{T^{-1}}=1_{\widetilde W}\quad\text{ and }\quad \widetilde{T^{-1}}\circ\widetilde T=1_{\widetilde V}.$$
These last equalities prove that $\widetilde T$ is a homeomorphism, as desired. 
\end{proof}



 \begin{lemma}\label{l:w_V es continua}
 Let $W\in\Gr_k(H)$. For any $(V,w)\in\widetilde W\times W$, let us denote by $w_V$ the unique element in $V$ that satisfies $\pi_W(w_V)=w$.
Then the map 
$\Phi_W:\widetilde W\times W\to \bigcup\limits_{V\in\widetilde W} V\subset H$ given by 
\begin{equation}\label{e:w_V}
    \Phi_W(V, w):=w_V
\end{equation} is continuous. 
     \end{lemma}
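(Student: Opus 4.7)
The plan is to establish sequential continuity of $\Phi_W$, which suffices because $\widetilde W\times W$ is metrizable: the Attouch--Wets topology on $\widetilde W\subset\Gr_k(H)$ is generated by the gap metric (Proposition~\ref{p:gap y attouch-Wets}), and $W$ carries the norm topology inherited from $H$. Before running the argument, I would verify that $w_V$ is in fact well defined: by the very definition of $\widetilde W$ the restriction $\pi_W|_V:V\to W$ is surjective, and since $\dim V=\dim W=k<\infty$ it is actually a linear isomorphism, so $w_V=(\pi_W|_V)^{-1}(w)$ is uniquely specified by every $(V,w)\in\widetilde W\times W$.

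Fix a point $(V_0,w_0)\in\widetilde W\times W$ and a sequence $(V_n,w_n)\to(V_0,w_0)$. I would pick a basis $\{v_1,\ldots,v_k\}$ of $V_0$ once and for all. Because the Attouch--Wets topology on $\Gr_k(H)$ coincides with the lower Vietoris topology (Theorem~\ref{t:lower y AW coincide}) and $V_0\in[v_1,\ldots,v_k;\varepsilon]$ for every $\varepsilon>0$, a diagonal argument produces, for each $i=1,\ldots,k$, a sequence $v_i^n\in V_n$ with $v_i^n\to v_i$. The openness argument used in the proof of Theorem~\ref{t: W tilde es abierto} (itself based on Lemma~\ref{l:fam afin indep}) then guarantees that $\{v_1^n,\ldots,v_k^n\}$ is linearly independent, hence a basis of the $k$-dimensional subspace $V_n$, for all $n$ large enough.

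Next I would transport the problem into $W$. Continuity of the bounded linear operator $\pi_W$ yields $\pi_W(v_i^n)\to\pi_W(v_i)$, and since $V_0\in\widetilde W$ the vectors $\{\pi_W(v_i)\}_{i=1}^k$ form a basis of $W$; the same openness argument keeps $\{\pi_W(v_i^n)\}_{i=1}^k$ a basis of $W$ for large $n$. Writing $w_n=\sum_{i=1}^k c_i^n\pi_W(v_i^n)$ and $w_0=\sum_{i=1}^k c_i^0\pi_W(v_i)$, the coefficients solve invertible $k\times k$ linear systems whose matrices and right-hand sides converge, so Cramer's rule gives $c_i^n\to c_i^0$. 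Since $\pi_W|_{V_n}$ is an isomorphism, one has $w_{V_n}=\sum_{i=1}^k c_i^n v_i^n$, and continuity of the vector-space operations in $H$ yields
$$\Phi_W(V_n,w_n)=\sum_{i=1}^k c_i^n v_i^n\longrightarrow \sum_{i=1}^k c_i^0 v_i=w_{V_0}=\Phi_W(V_0,w_0).$$

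The main obstacle, in my view, is that there is no globally continuous selection of a basis on $\widetilde W$, so one cannot hope to reduce $\Phi_W$ to a single coordinate formula valid on all of $\widetilde W$. The strategy above circumvents this global obstruction by working only sequentially at a single point: the lower Vietoris characterization of convergence supplies a convergent choice of bases along any prescribed sequence $V_n\to V_0$, and once such a local selection is available the remainder of the proof is standard finite-dimensional linear algebra carried out inside the $k$-dimensional space $W$.
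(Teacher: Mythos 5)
Your proposal is correct, but it follows a genuinely different route from the paper. You reduce the problem to sequential continuity (legitimate, since $\widetilde W$ carries the gap/Attouch--Wets metric and $W$ the norm metric, so the domain is metrizable), extract along a given sequence $V_n\to V_0$ a convergent choice of bases $v_i^n\to v_i$ via the lower Vietoris characterization, note that linear independence is stable under small perturbations (the linear analogue of Lemma~\ref{l:fam afin indep}, as in the proof of Theorem~\ref{t: W tilde es abierto}), and then finish with finite-dimensional linear algebra in $W$: the coefficients of $w_n$ in the basis $\{\pi_W(v_i^n)\}$ converge (e.g.\ via Gram systems or coordinates in a fixed basis of $W$ and Cramer's rule), so $\Phi_W(V_n,w_n)=\sum_i c_i^n v_i^n\to\sum_i c_i^0 v_i=\Phi_W(V_0,w_0)$; it would be worth one line making explicit how the $k\times k$ system is set up, but this is routine. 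The paper instead argues directly with an $\varepsilon$--$\delta$ estimate on an explicit lower-Vietoris basic neighborhood: it chooses affinely independent points $v_0,\dots,v_k\in V$ near $w_V$, projects them into $W$, and uses the convex-hull stability result (Lemma~\ref{l: Bola contenida enc asco convexo}) to write the nearby point $w'$ as a convex combination of projections of points of $V'$, pulling that combination back into $V'$ to identify $\Phi_W(V',w')$ within $\varepsilon$ of $w_V$. Your argument is more elementary and coordinate-based, at the cost of being pointwise-sequential and dependent on metrizability; the paper's argument is coordinate-free, yields explicit neighborhoods and quantitative control, and reuses the same convex-hull machinery that appears again in Lemma~\ref{l:w_V inducida es continua}.
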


\begin{proof}
   Let $(V,w)\in \widetilde W\times W$ and $\varepsilon>0$.
Pick $v_0, \dots,v_{k}\in V\cap B(w_V,\varepsilon)$ affinely independent points such that $w_V$ lies in the relative interior of $\cco\{v_0,\dots ,v_k\}$.
   Then we can find $\varepsilon_1>0$ with the property that
   $$V\cap B(w_V,\varepsilon_1)\subset \cco\{v_0,\dots ,v_k\}\subset V\cap B(w_V,\varepsilon).$$

   For each $i=0,\dots,k$, define $w_i:=\pi_W(v_i)$. Since $\pi_W|_{V}$ is a linear isomorphism, we have that
$\{w_0,\dots,w_k\}$ is an affinely independent set and $w=\pi_W(w_V)$ lies in the relative interior of $\cco\{w_0,\dots ,w_k\}$. This implies that there exists $\delta>0$ such that 
   $$B(w, 3\delta)\cap W\subset \cco^\circ\{w_0,\dots ,w_{k}\}.$$
   Now, let $\delta_1\leq \delta$ be small enough such that $\{B(w_i, \delta_1)\}_{i=0}^k$ is an affinely independent family with the property that $$B(w, \delta)\cap W\subset \cco\{w'_0,\dots ,w'_{k}\}$$ for any $(w'_0,\dots,w'_k)\in \prod_{i=0}^{k}\big(B(w_i,\delta_1)\cap W\big) $ (see lemmas~\ref{l:fam afin indep} and \ref{l: Bola contenida enc asco convexo}).

Finally, let $\eta>0$ be small enough such that $\eta\leq \delta_1$, the family $\{B(v_i, \eta)\}_{i=0}^k$ is affinely independent and $B(v_i,\eta)\subset B(w_V,\varepsilon)$ for any $i=0,\dots,k$.
Define $$\mathcal O:=\big([v_0,\dots v_k; \eta]\cap \widetilde W\big)\times \big(W\cap B(w,\delta)).$$ Clearly $\mathcal O$ is an open neighborhood of $(V,w)$ in $\widetilde W\times W$. Furthermore, if $(V',w')\in \mathcal O$,  for every $i=0,\dots, k$ we can find elements $v'_i\in V'\cap B(v_i, \eta)\subset B(w_V, \varepsilon)$. 
Since $\pi_W$ is non expansive (property (P3)), we have that 
$$\|w_i-\pi_W(v_i')\|=\|\pi_W(v_i)-\pi_W(v_i')\|\leq \|v_i-v_i'\|<\eta\leq \delta_1.$$  

Hence, each point $z_i:=\pi_W(v_i')$ lies in $B(w_i, \delta_1)\cap W$ and therefore
$w'\in B(w,\delta)\cap W\subset \cco\{z_0,\dots,z_k\}$.
This implies that we can find scalars $\lambda_i\in [0,1]$, $i=0,\dots, k$ such that 
\begin{align*}
   w'&=\sum\limits_{i=0}^{k}\lambda_i z_i= \sum\limits_{i=0}^{k}\lambda_i\pi_{W}(v_i')\\
   &=\pi_W\bigg(\sum_{i=0}^{k}\lambda_iv_{i}'\bigg).
\end{align*}
   Let $z:=\sum_{i=1}^{k}\lambda_iv_{i}'$. Hence $z\in \cco\{v_0',\dots,v_k'\}\subset B(w_V, \varepsilon)\cap V'$ and $\pi_W(z)=w'$. 
   By the definition of $\Phi_W$ we conclude that $z=w'_{V'}=\Phi_W(V',w')$ and 
   $$\|\Phi_W(V,w)-\Phi_W(V',w')\|=\|w_V-z\|<\varepsilon.$$
   Then the map $\Phi_W$ is continuous, as desired.   
\end{proof}

\begin{theorem}\label{t: grasmaniana afin es haz fibrado sobre la lineal}
Let $q:\ag_k(n)\to\Gr_k(n)$ be the map defined in equation~(\ref{eq:traslacion de F}). \begin{enumerate}[\rm(1)]
    \item For every $W\in\Gr_k(n)$, the set $q^{-1}(\widetilde W)$ is homeomorphic with the topological product $\widetilde W \times \mathbb R^{n-k}$.
    \item The tuple $(\ag_k(n), \Gr_k(n), \mathbb R^{n-k}, q)$ is a vector bundle. 
\end{enumerate}
\end{theorem}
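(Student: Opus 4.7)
The plan is to construct, for every $W \in \Gr_k(n)$, an explicit trivializing homeomorphism $\psi_W: \widetilde W \times W^\perp \to q^{-1}(\widetilde W)$ that intertwines $q$ with the first projection. Every $F \in q^{-1}(\widetilde W)$ decomposes as $F = V + p(F)$, where $V := q(F) \in \widetilde W$, and property (P6) forces $p(F) \in V^\perp$. To obtain a fiber that does not depend on $V$, I will identify each $V^\perp$ with $W^\perp$ by means of the orthogonal projection $\pi_{W^\perp}$.

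A short linear-algebra observation is that $V \in \widetilde W$ forces $\pi_W|_V : V \to W$ to be a bijection, hence $V \cap W^\perp = \{0\}$, hence $V + W^\perp = \mathbb R^n$ by dimension count; taking orthogonal complements gives $V^\perp \cap W = \{0\}$, i.e., $V^\perp$ belongs to the analogous open set $\widetilde{W^\perp}$ inside $\Gr_{n-k}(n)$. Consequently Lemma~\ref{l:w_V es continua}, applied to $W^\perp$ in place of $W$, produces for each $(V, z) \in \widetilde W \times W^\perp$ a unique vector $\Phi_{W^\perp}(V^\perp, z) \in V^\perp$ with $\pi_{W^\perp}(\Phi_{W^\perp}(V^\perp, z)) = z$. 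I then define
\begin{equation*}
\psi_W(V, z) := V + \Phi_{W^\perp}(V^\perp, z),
\end{equation*}
with candidate inverse $F \mapsto (q(F),\, \pi_{W^\perp}(p(F)))$. A brief check using the uniqueness clause of Lemma~\ref{l:w_V es continua} together with the identity $p(V + a) = a$ for $a \in V^\perp$ shows that the two maps are mutual inverses and that $q \circ \psi_W$ coincides with the first projection.

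Continuity of $\psi_W$ follows by composing continuous ingredients: $V \mapsto V^\perp$ is continuous by Remark~\ref{p: orthogonal complent es isometria}, the map $(V, z) \mapsto \Phi_{W^\perp}(V^\perp, z)$ is continuous by Lemma~\ref{l:w_V es continua}, and flat-plus-vector addition is continuous by Proposition~\ref{p: basic properties}(2). Continuity of $\psi_W^{-1}$ uses Lemma~\ref{l: q is continuo} (continuity of $q$), Lemma~\ref{l:continuidad funcion p} (continuity of $p$), and the continuity of the linear map $\pi_{W^\perp}$. This settles (1). Combining Theorem~\ref{t: W tilde es abierto} (the sets $\widetilde W$ cover $\Gr_k(n)$) with the trivializations $\psi_W$ and the linear isomorphism $W^\perp \cong \mathbb R^{n-k}$ yields the vector bundle structure in (2). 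The main obstacle is the perpendicular bookkeeping $V \leftrightarrow V^\perp$, which is what allows Lemma~\ref{l:w_V es continua} to be invoked with argument $W^\perp$; once this equivalence is noted, the rest is a routine assembly of continuity statements already established in the paper.
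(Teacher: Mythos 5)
Your proof is correct, and it follows the same overall strategy as the paper (local trivializations over the open sets $\widetilde W$ with fiber $W^\perp$, glued by the cover from Theorem~\ref{t: W tilde es abierto}), but the trivializing map itself is genuinely different. The paper uses the simpler chart $\varphi_W(V,\omega):=V+\omega$, whose continuity is immediate from Proposition~\ref{p: basic properties}; the price is paid in the inverse, which is $F\mapsto\bigl(q(F),\,p(F)-\Phi_W(q(F),\pi_W(p(F)))\bigr)$ and needs Lemma~\ref{l:w_V es continua} applied to $W$ itself. You instead normalize the fiber coordinate first, via the decomposition $F=V+p(F)$ with $p(F)\in V^\perp$ (property (P6)) and the identification $V^\perp\cong W^\perp$ through $\pi_{W^\perp}$: your chart $\psi_W(V,z)=V+\Phi_{W^\perp}(V^\perp,z)$ carries the correction in the forward direction, and in exchange the inverse collapses to the very clean $F\mapsto\bigl(q(F),\pi_{W^\perp}(p(F))\bigr)$, needing only Lemma~\ref{l: q is continuo}, Lemma~\ref{l:continuidad funcion p} and linearity of $\pi_{W^\perp}$. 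The extra ingredients you must (and do) supply are the linear-algebra fact that $V\in\widetilde W$ implies $V^\perp\cap W=\{0\}$, i.e.\ $V^\perp\in\widetilde{W^\perp}\subset\Gr_{n-k}(n)$, so that Lemma~\ref{l:w_V es continua} can legitimately be invoked with $W^\perp$ in place of $W$ (the lemma is dimension-free, so this is fine), and the continuity of orthocomplementation from Remark~\ref{p: orthogonal complent es isometria} combined with Proposition~\ref{p:gap y attouch-Wets}, which the paper's proof of this theorem never needs. Both routes verify $q\circ\psi_W=\mathrm{pr}_1$ via (P5) (in your case through $p(V+a)=a$ for $a\in V^\perp$), and both rest on the same key continuity lemma; yours trades a slightly heavier forward map for a lighter inverse and makes the geometric content of the fiber coordinate (the foot of the flat, transported to $W^\perp$) more transparent.
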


\begin{proof}

We already know that $q:\ag_k(n)\to\Gr_k(n)$ is a continuous surjection (Lemma~\ref{l: q is continuo}) and the family $\{\widetilde W\}_{W\in \Gr_k(n)}$ is an open cover for $\Gr_k(n)$ (Theorem~\ref{t: W tilde es abierto}). Furthermore, for every $W\in \Gr_k(n)$, we clearly have that $W^\perp$ and $\mathbb R^{n-k}$ are linearly isomorphic. Fix $W\in \Gr_k(n)$. Thus, in order to prove the theorem, it is enough to exhibit  a homeomorphism $\varphi_W:\widetilde W\times W^\perp\to q^{-1}(\widetilde W)$ such that 
\begin{equation}\label{eq: lo que debe cumplir varphi}
    q(\varphi _W(V,\omega))=V, \quad \text{ for all }(V,\omega)\in \widetilde W\times W^\perp.
\end{equation}

To that end, define $\varphi_W:\widetilde W\times W^\perp\to q^{-1}(\widetilde W)$ as 
\begin{equation}\label{eq.definicion varphi W}
    \varphi_W(V,\omega):=V+\omega.
\end{equation}
If $(V,\omega)\in \widetilde W\times W^\perp$,
notice that 
\begin{align*}
    q(\varphi _W(V,\omega))&=q(V+\omega)
    =V+\omega-p(V+\omega),
\end{align*}
where $p$ is the map defined in equation~(\ref{eq:p}). By property (P5), $p(V+\omega)=\omega-\pi_V(\omega)$. Using this equality and the fact that $\pi_V(\omega)\in V$, we obtain that
\begin{align*}
    q(\varphi_W(V,\omega))&=q(V+\omega)
    =V+\omega-\omega+\pi_V(\omega)\\
    &=V+\pi_V(\omega)=V.
\end{align*}
This means that  $\varphi_W$ satisfies equation~(\ref{eq: lo que debe cumplir varphi}).
Furthermore, since $V\in \widetilde W$, we also have that $$\varphi_W(V,\omega)\in q^{-1}(V)\subset q^{-1}(\widetilde W).$$
Namely, $\varphi_W$ is well defined. 
On the other hand, the continuity of $\varphi_W$ is guaranteed by Proposition~\ref{p: basic properties}. 

To finish the proof, we still need to prove that $\varphi_W$ is bijective and $\varphi_W^{-1}$ is continuous. 

Let $(V,\omega), (V',\omega')\in \widetilde W\times W^\perp$ be such that $$\varphi_W(V,\omega)=V+\omega=V'+\omega'=\varphi_W(V',\omega').$$
Then $V=q(\varphi_W(V,\omega))=q(\varphi_W(V',\omega'))=V'$.  If $\omega\neq \omega'$, since $\omega'\in V'+\omega'=V+\omega$, there exists a non-null vector $v\in V$ such that $\omega'=v+\omega$. This implies that 
$v=\omega'-\omega \in W^\perp$ and therefore
$\pi_W(v)=\pi_W(\omega'-\omega)=0$. Since $v\neq 0$, we infer that $\pi_W(V)\neq W$, contradicting the fact that $V\in \widetilde W$. Thus $\omega=\omega'$ and therefore the map $\varphi_W$ is injective.

Now, let us prove that $\varphi_W$ is onto. To do this, pick a flat $F\in q^{-1}\big(\widetilde W\big)$ and define $V:=q(F)$. Hence, $F=p(F)+V$ and $V\in \widetilde W$. Therefore we can find an element $v\in V$ such that
\begin{equation}\label{e:onto 1}
    \pi_W(v)=\pi_{W}(p(F))
\end{equation}
Let $w:=p(F)-v$
and observe that $w\in p(F)+V=F$.
On the other hand, since $w=\pi_{W}(w)+\pi_{W^\perp}(w)$, we obtain that
\begin{align*}
   \pi_{W^\perp}(w)&=w- \pi_{W}(w)=w-\pi_{W}(p(F)-v)\\
   &=w-\big(\pi_{W}(p(F))-\pi_W(v)\big)=w-0=w.
\end{align*}
This proves that $w\in W^\perp$, and therefore $(V,w)\in \widetilde W\times W^\perp$. Now simply observe that 
$$\varphi_W(V, w)=V+w=V+p(F)-v=V+p(F)=F,$$
which proves that $\varphi_W$ is a surjective map.
In addition, all previous reasoning allows us to conclude that $\varphi_W^{-1}$ is defined by
$$\varphi_W^{-1}(F):=\big(q(F), p(F)-v \big),$$
where $v$ is the unique vector in $q(F)$ that satisfies equality~(\ref{e:onto 1}). However, by Lemma~\ref{l:w_V es continua}, $v=\Phi_W\big(q(F),\pi_{W}(p(F)) \big)$, where $\Phi_W$ is the continuous map defined in equation~(\ref{e:w_V}). Hence, we can write $\varphi_W^{-1}(F)$ as
\begin{equation}\label{eq: inversa de varphi_W}
    \varphi_W^{-1}(F)=\Big(q(F), p(F)-\Phi_W\big(q(F),\pi_{W}(p(F)) \big)\Big).
\end{equation}

Then the continuity of $\varphi_W^{-1}$ follows from the continuity of $q$, $p$ and $\Phi_W$.

\end{proof}

\section{The hyperspace of $k$-dimensional closed convex sets}\label{s:main}

The aim of this section is to prove our main results. That is, we will prove that $\mathcal K^n_k$ and $\mathcal K^n_{k,b}$ are $Q$-manifolds with a fiber bundle structure over the Grassmann manifold $\Gr_k(n)$.

\begin{proposition}\label{p: casco affin es continua}

The map $\Phi: (\mathcal K^H_k, \tau_{LV}) \to \ag_k(H)$ given by 
\begin{equation}\label{eq: map Phi}
  \Phi(A):=\Aff (A)  
\end{equation}

is a retraction. 
\end{proposition}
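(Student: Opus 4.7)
The plan is to verify separately that $\Phi$ restricts to the identity on $\ag_k(H)$ and that $\Phi$ is continuous. The first part is immediate: any $F \in \ag_k(H)$ is a $k$-dimensional flat, hence a $k$-dimensional closed convex set with $\Aff(F) = F$, so $\Phi(F) = F$. The substance of the proof is continuity, which I would establish by showing that the preimage under $\Phi$ of every basic lower Vietoris open set in $\ag_k(H)$ is open in $(\mathcal{K}_k^H, \tau_{LV})$; the topology on $\ag_k(H)$ here is the lower Vietoris topology, which by Theorems~\ref{t:lower and Fell coincide} and~\ref{t:lower y AW coincide} agrees with all other natural topologies on $\ag_k(H)$.

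Fix $A \in \mathcal{K}_k^H$ and a basic neighborhood $\mathcal{W} = [a_1, \ldots, a_m; \varepsilon]_{\ag_k(H)}$ of $\Aff(A)$, so for each $i$ there exists $f_i \in \Aff(A) \cap B(a_i, \varepsilon)$. Since $\dim A = k$, I would select $k+1$ affinely independent points $x_0, \ldots, x_k \in A$ generating $\Aff(A)$, and for each $i$ express $f_i = \sum_{j=0}^k \mu_{ij} x_j$ with $\sum_{j} \mu_{ij} = 1$. Put $M := \max_i \sum_{j=0}^k |\mu_{ij}|$, a finite constant depending only on the data fixed so far.

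Next I would choose $\delta > 0$ small enough that $\{B(x_j, \delta)\}_{j=0}^k$ is an affinely independent family (via Lemma~\ref{l:fam afin indep}) and $M \delta < \min_i\{\varepsilon - \|f_i - a_i\|\}$. The set $\mathcal{U} := [x_0, \ldots, x_k; \delta]_{\mathcal{K}_k^H}$ is then a lower Vietoris neighborhood of $A$, since $x_j \in A \cap B(x_j, \delta)$ for each $j$. For any $B \in \mathcal{U}$, pick $y_j \in B \cap B(x_j, \delta)$; affine independence of the family of balls forces $\{y_0, \ldots, y_k\}$ to be affinely independent, so $\Aff\{y_0, \ldots, y_k\} \subseteq \Aff(B)$ is a $k$-dimensional flat, and since $\dim B = k$ the two flats coincide. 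Setting $g_i := \sum_j \mu_{ij} y_j \in \Aff(B)$, the triangle inequality gives $\|g_i - f_i\| \leq \sum_j |\mu_{ij}|\,\|y_j - x_j\| < M\delta$, so $\|g_i - a_i\| < \varepsilon$, proving $\Aff(B) \cap B(a_i,\varepsilon) \neq \emptyset$ for every $i$, i.e., $\Phi(B) \in \mathcal{W}$.

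The main technical obstacle is that the coefficients $\mu_{ij}$ of the affine expansions may be arbitrarily large in absolute value, so Remark~\ref{r:distancia entre combinaciones convexas}, which applies only to convex combinations, does not directly propagate the perturbation bound. The fix is conceptual rather than computational: fix the generating set $\{x_0, \ldots, x_k\} \subset A$ and the coefficients $\mu_{ij}$ first, so that $M$ becomes a constant determined by $A$ and $\mathcal{W}$, and only then choose $\delta$ small enough to absorb $M$.
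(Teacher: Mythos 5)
Your proof is correct and follows essentially the same route as the paper's: express the witness point(s) of $\Aff(A)$ as affine combinations of finitely many points of $A$ with coefficients fixed in advance, and then choose $\delta$ small enough to absorb the size of those coefficients (the paper does this for a single sub-basic set $U^-$ with $j\leq k$ points of $A$, you for a basic set $[a_1,\dots,a_m;\varepsilon]$ using a full affine basis of $\Aff(A)$ inside $A$, an immaterial difference). The only remark worth making is that your appeal to Lemma~\ref{l:fam afin indep} and the identification $\Aff\{y_0,\dots,y_k\}=\Aff(B)$ are superfluous: all you actually use is that $g_i\in\Aff(B)$, which already follows from $y_j\in B$.
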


\begin{proof}
Since $\Aff(F)=F$ for every $F\in \ag_k(H)\subset \mathcal K^H_k$, we only need to prove the continuity of $\Phi$. To do that, let $U\subset H$ be any open subset. We shall prove that $\Phi^{-1}(U^-)$ is an open subset in the lower Vietoris topology of $\mathcal K^H_k$.

Consider any element $A\in \mathcal K_k^H$ such that $\Phi(A)=\Aff(A)\in  U^-$. Thus, $ \Aff(A)\cap U \neq \emptyset$ and therefore we can find an element $x \in \Aff(A) \cap U$ and $\delta >0$ such that $B(x,\delta) \subset U$. 

By the definition of $\Aff(A)$, there exist affinely independent points   $a_0, \ldots, a_{j} \in A$  (with $j \leq k$) and scalars $\lambda_0, \dots,\lambda_j\in\mathbb R\setminus\{0\}$ such that     $$x= \sum_{i=0}^{j} \lambda_i  a_i \; \text{and} \; \sum_{i=0}^{j} \lambda_i =1. $$ 
Now, define $\varepsilon >0$ as
 $$\varepsilon:= \min \left\lbrace \frac{\delta}{\vert \lambda_i \vert (j+1)} : 0 \leq i \leq j \right\rbrace.$$

Clearly $[a_0,  \dots , a_j;\varepsilon]$ is an open neighborhood of $A$ in the lower Vietoris topology of $\mathcal K^H_k$.  Now, if $B \in [a_0,  \dots, a_j;\varepsilon]$,  we can find points $b_0,  \dots, b_{j} \in B$ with $d(a_i, b_i)< \varepsilon$ for every $i=0,\dots, j$. Therefore, the point $$y:=\sum_{i=0}^{j} \lambda_i b_i $$ belongs to $\Phi(B)=\Aff(B)$ and satisfies 
\begin{align*}
d(x,y)= \Vert x-y \Vert &= \left\Vert \sum_{i=0}^{j} \lambda_i (a_i-b_i) \right\Vert \\
&\leq \sum_{i=0}^{j} \vert \lambda_i \vert \Vert a_i-b_i \Vert < \delta.
\end{align*}
We infer that $y \in B(x, \delta) \subset U $ and hence $\Phi(B)\in  U^-$. This proves that $\Phi$ is continuous as desired. 
\end{proof}

\begin{remark}\label{r:casco Afin es continua}
    Since the Lower Vietoris topology is weaker than the Fell topology, the Hausdorff metric topology, and the Attouch-Wets metric topology, the result in Proposition~\ref{p: casco affin es continua} remains true if we replace $(\mathcal K^H_k, \tau_{LV})$ by $(\mathcal K^H_k, \tau_{F})$, $(\mathcal K^H_{k,b}, \tau_{H})$ or $(\mathcal K^H_k, \tau_{AW})$.
\end{remark}

For any $W\in\Gr_k(n)$, let $$\Phi_W:\widetilde W\times W\to \bigcup\limits_{V\in\widetilde W} V\subset\mathbb R^n$$ be the map defined in Lemma~\ref{l:w_V es continua}.
Consider $\mathcal K_k^W$ the hyperspace of all $k$-dimensional closed convex subsets of $W$ equipped with the Attouch-Wets metric topology.  
Let us define a new map $$\Lambda_W:\widetilde W\times \mathcal K_k^W\to \bigcup_{V\in\widetilde W}\mathcal K_k^V\subset \mathcal K^n_k$$ by means of the following formula:
\begin{equation}\label{f:funcion lambda}
  \Lambda_W(V,A):=\Phi_W(\{V\}\times A)=\{\Phi_W(V,w):w\in A\}.   
\end{equation}

That is, the map $\Lambda_W$ assigns to each pair $(V,A)\in\widetilde W\times \mathcal K_k^W$, the unique set $B\subset V$ such that $\pi_W(B)=A$. In particular, we have that
\begin{equation}\label{eq: proyeccion de lambda  es A}
    \pi_W(\Lambda_W(V,A))=A\quad\text{for every }(V,A)\in\widetilde W\times \mathcal K_k^W.
\end{equation}
Since the restriction $\pi_W|_V$ is a linear isomorphism, the set $\Lambda_W(V,A)$ is a closed $k$-dimensional convex set for every $A\in \mathcal K^W_k$. Hence, the codomain of the map $\Lambda_W$ is well defined. 

On the other hand, notice that if $A\in \mathcal K_{k,b}^W$, then 
\begin{equation}\label{f: la restriccion de lambda funciona}
    \Lambda_W(V, A)\in \mathcal K^V_{k,b}\subset \mathcal K_{k,b}^n.
\end{equation}

Throughout the rest of the section, we will assume that $\mathcal K^n_k$ is equipped with the Attouch-Wets topology (which in this case coincides with the Fell topology) and $\mathcal K^n_{k,b}$ is equipped with the Hausdorff distance topology (which also coincides with the Fell and the Attouch-Wets topologies).
Using these topologies, in the following lemma we will prove that $\Lambda_W$ is continuous.

\begin{lemma}\label{l:w_V inducida es continua}
 Let $W\in\Gr_k(n)$ and consider the map $\Lambda_W$ defined in equation~(\ref{f:funcion lambda}). Then the following holds:
 \begin{enumerate}[\rm(1)]
     \item  $\Lambda_W$  is continuous.
     \item If we equip $\mathcal K_{k,b}^W$ and $\mathcal K_{k,b}^n$ with the topology induced by the Hausdorff distance, then the restriction $\Lambda_{W}|_{\widetilde W\times \mathcal{K}_{k,b}^W}:\widetilde W\times \mathcal{K}_{k,b}^W\to \mathcal{K}_{k,b}^n$ is continuous. 
 \end{enumerate}

 \end{lemma}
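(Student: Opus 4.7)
My plan is to derive the continuity of $\Lambda_W$ from the pointwise continuity of $\Phi_W$ (Lemma \ref{l:w_V es continua}), together with the observation that $\Lambda_W(V,A)$ is simply the $\Phi_W(V,\cdot)$-image of $A$. For (1), I would verify continuity with respect to the Fell subbase on $\mathcal K^n$ (which agrees with the Attouch--Wets topology by Fact \ref{f: AW=FELL}), namely the sets $U^-$ (with $U\subset\mathbb R^n$ open) and $(\mathbb R^n\setminus C)^+$ (with $C\subset\mathbb R^n$ compact). For (2), I would work directly with the two-sided definition of the Hausdorff metric, using compactness of $A_0$ to obtain uniformity.

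For the $U^-$ half of (1), fix $(V_0,A_0)$ with $\Lambda_W(V_0,A_0)\cap U\neq\emptyset$ and pick $z_0$ in this intersection. Writing $w_0:=\pi_W(z_0)\in A_0$ we have $z_0=\Phi_W(V_0,w_0)$, so Lemma \ref{l:w_V es continua} supplies a neighborhood $\mathcal V\times B(w_0,\delta)$ of $(V_0,w_0)$ with $\Phi_W(\mathcal V\times B(w_0,\delta))\subset U$. Since the lower Vietoris topology is weaker than $\tau_{AW}$, the set $[w_0;\delta]_{\mathcal K_k^W}$ is an Attouch--Wets neighborhood of $A_0$, and any $(V,A)\in\mathcal V\times [w_0;\delta]_{\mathcal K_k^W}$ yields a point $w\in A\cap B(w_0,\delta)$ with $\Phi_W(V,w)\in\Lambda_W(V,A)\cap U$. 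For the $(\mathbb R^n\setminus C)^+$ half, suppose $\Lambda_W(V_0,A_0)\cap C=\emptyset$. The map
$$g:C\to[0,\infty),\qquad g(c):=\max\{d(c,V_0),d(\pi_W(c),A_0)\}$$
is continuous and strictly positive (any zero of $g$ would place its argument in $\Lambda_W(V_0,A_0)$), so $\eta:=\min_C g>0$ by compactness. Choose $j\in\mathbb N$ with $j>\max_{c\in C}\|c\|$ and $1/j<\eta/2$. For $c\in C$ the point $\pi_W(c)$ also lies in $j\mathbb B$ (projection is nonexpansive), so Lemma \ref{lem:Lemma-Ananda}(1) guarantees that, whenever $d_{AW}(V,V_0)<1/j$ and $d_{AW}(A,A_0)<1/j$, both $|d(c,V)-d(c,V_0)|$ and $|d(\pi_W(c),A)-d(\pi_W(c),A_0)|$ are less than $1/j<\eta/2$; hence $\max\{d(c,V),d(\pi_W(c),A)\}>0$, precluding $c\in\Lambda_W(V,A)$.

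For (2), I would upgrade the pointwise continuity of $\Phi_W$ to a uniform estimate by finite covering. Given $\varepsilon>0$, cover $A_0$ by balls $B(w_i,\delta_i/2)$ ($i=1,\dots,m$) chosen so that Lemma \ref{l:w_V es continua} provides open neighborhoods $\mathcal V_i$ of $V_0$ with $\Phi_W(\mathcal V_i\times B(w_i,\delta_i))\subset B(\Phi_W(V_0,w_i),\varepsilon/3)$. Set $\mathcal V:=\bigcap_i\mathcal V_i$ and $\delta:=\tfrac12\min_i\delta_i$. For $V\in\mathcal V$ and $A\in\mathcal K_{k,b}^W$ with $d_H(A,A_0)<\delta$, the inclusion $\Lambda_W(V_0,A_0)\subset\Lambda_W(V,A)+\varepsilon\mathbb B$ follows by taking $z\in\Lambda_W(V_0,A_0)$, setting $w:=\pi_W(z)\in A_0$, choosing an index $i$ with $|w-w_i|<\delta_i/2$ and a point $w'\in A$ with $|w-w'|<\delta\leq\delta_i/2$; then $w,w'\in B(w_i,\delta_i)$ gives $|\Phi_W(V,w')-z|\leq |\Phi_W(V,w')-\Phi_W(V_0,w_i)|+|\Phi_W(V_0,w_i)-\Phi_W(V_0,w)|<2\varepsilon/3$ with $\Phi_W(V,w')\in\Lambda_W(V,A)$. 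The reverse inclusion $\Lambda_W(V,A)\subset\Lambda_W(V_0,A_0)+\varepsilon\mathbb B$ is symmetric: starting from $z'\in\Lambda_W(V,A)$, the point $w':=\pi_W(z')\in A$ admits $w\in A_0$ with $|w-w'|<\delta$, and the same two-triangle estimate places $\Phi_W(V_0,w)\in\Lambda_W(V_0,A_0)$ within $\varepsilon$ of $z'$.

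The main obstacle I anticipate is the uniformity required in (2): the continuity statement in Lemma \ref{l:w_V es continua} is pointwise in both variables, and the finite-cover argument above is what converts it into the uniform Hausdorff bound across the whole compact set $\Lambda_W(V_0,A_0)$. A secondary subtlety is the reverse inclusion, in which one must first transport $\pi_W(z')\in A$ to a point of $A_0$ before applying the covering argument; this step relies on $A$ being Hausdorff-close to $A_0$, which is precisely where the restriction to bounded sets in (2) is used.
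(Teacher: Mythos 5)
Your proposal is correct, but it follows a genuinely different route from the paper. The paper proves (1) by a quantitative Attouch--Wets estimate: fixing $(V,A)$ and $j$ with $1/j\leq\varepsilon$, it chooses constants $L$ and $M$, uses local compactness of $\widetilde W$ to get \emph{uniform} continuity of $\Phi_W$ on a compact product $\mathcal O_1\times(M\mathbb B\cap W)$, proves two claims controlling $d\bigl(0,\Lambda_W(U,B)\bigr)$ and reducing distances to the truncated sets $\Lambda_W(\cdot,\cdot)\cap B(0,L)$ via Lemma~\ref{l:distancia igual a distancia en interseccion}, and then bounds $\sup_{\|x\|<j}|d(x,\Lambda_W(V,A))-d(x,\Lambda_W(U,B))|$ to conclude with Lemma~\ref{lem:Lemma-Ananda}; part (2) is then immediate because $d_H$ and $d_{AW}$ induce the same topology on bounded convex sets. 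You instead prove (1) qualitatively through the Fell subbase (legitimate by Fact~\ref{f: AW=FELL}): the hit sets $U^-$ are handled by pointwise continuity of $\Phi_W$ plus a lower-Vietoris neighborhood $[w_0;\delta]$, and the miss sets $(\mathbb R^n\setminus C)^+$ by the positive minimum of $c\mapsto\max\{d(c,V_0),d(\pi_W(c),A_0)\}$ on the compact $C$ together with Lemma~\ref{lem:Lemma-Ananda}(1); this avoids the paper's delicate bookkeeping with $L$, $M$ and its two claims. For (2) you give a separate direct Hausdorff estimate, upgrading the pointwise continuity of $\Phi_W$ to a uniform one by a finite cover of the compact set $A_0$ and proving both inclusions $\Lambda_W(V_0,A_0)\subset\Lambda_W(V,A)+\varepsilon\mathbb B$ and its reverse; this is sound, though strictly speaking unnecessary extra work, since (as the paper observes, citing that $d_H$ and $d_{AW}$ agree on $\mathcal K^H_b$) part (2) already follows from part (1). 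In exchange, your argument for (2) is self-contained and makes explicit where compactness of $A_0$ enters, while the paper's single metric computation yields both statements at once and produces an explicit modulus in the Attouch--Wets metric.
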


\begin{proof}

Recall that in $\mathcal{K}_{k,b}^W$ and $\mathcal{K}_{k,b}^n$, the topology induced by the Hausdorff distance coincides with the one induced by the Atouch-Wets metric. Thus, in order to prove the lemma, it suffices to prove part (1).

Let $\varepsilon >0$ and pick $j\in\mathbb N$ such that $1/j\leq\varepsilon$. Take a positive number $\eta\in (\frac{1}{j+1},\frac{1}{j})$ and fix an element $(V, A)\in \widetilde W\times \mathcal K_k^W$. We shall prove the continuity of the map $\Lambda_W$ at $(V, A)$. 
For that purpose, let $L>2j+\eta+d\big(0,\Lambda_W(V,A)\big)$.
Since $L\mathbb B$ is compact, the set $\pi_W(L\mathbb B)$ is bounded and therefore we can find $M\in \mathbb N$ such that
\begin{equation}\label{f:contenciones proyeccion y bolas}
    \pi_W(L\mathbb B)\subset (M-1)\mathbb B\cap W\subset M\mathbb B\cap W. 
\end{equation}

Observe that $\widetilde W$ is locally compact (since it is an open subset in the compact and metric space $\Gr_k(n)$), then we can find a compact neighborhood $\mathcal O_1\subset \widetilde W$ of $V$.
Hence, the restriction of the continuous map $\Phi_W$ to the compact set $\mathcal O_1\times (M\mathbb B\cap W)$ must be uniformly continuous. In particular, we can find an open neighborhood $\mathcal O\subset \mathcal O_1\subset \widetilde W$ of $V$ and $0<\delta < 1/M$ such that for any $U\in \mathcal O$ and any pair $x,y\in M\mathbb B\cap W $ such that $\|x-y\|<\delta$, the following inequality holds
\begin{equation}\label{d:esigualdad continuiad uniforme}
  \|\Phi_W(V, x)-\Phi_W(U, y)\|<\eta. 
\end{equation}

Let $U\in\mathcal O$ and $B\in \mathcal K_k^W$ be such that $d_{AW}(A, B)<\delta$.
To prove the lemma, we will show that $d_{AW}\big(\Lambda_W(V,A),\Lambda_W(U,B)\big)<\varepsilon$.


Since $\delta < 1/M$, by Lemma~\ref{lem:Lemma-Ananda}, we have that
\begin{equation}\label{eq:diferencia de desitancias menor que delta}
    \sup_{\|x\|<M}\{|d(x,A)-d(x,B)|\}<\delta.
\end{equation}

On the other hand, the choice of $L$ in combination with Lemma~\ref{l:distancia igual a distancia en interseccion}, implies that 
\begin{equation}\label{eq: distancia de lambda en A al 0}
d\big(x, \Lambda_W(V,A)\big)=d\big(x,\Lambda_W(V,A)\cap B(0, L)\big), 
\end{equation}
for every $x\in\mathbb R^n$ with $\|x\|<j$.
\medskip

\textbf{Claim 1.} $d\big(0, \Lambda_W(U,B)\big)\leq d\big(0,\Lambda_W(V,A) \big)+\eta$.

\textit{Proof of Claim 1.}
  Let $z:=p\big(\Lambda_W(V,A)\big)$ be the nearest point of $\Lambda_W(V,A)$ to the origin.  Since $L> d\big(0,\Lambda_W(V,A) \big)$, the point $z$ lies in $B(0,L)$. Then $\|z\|=d\big(0,\Lambda_W(V,A) \big)$ and  $$w:=\pi_W(z)\in A\cap \pi_W(B(0,L))\subset (M-1)\mathbb B\subset B(0,M).$$ 

    By inequality (\ref{eq:diferencia de desitancias menor que delta}), we conclude that
    $$d(w, B)=|d(w,B)-d(w,A)|<\delta.$$

    Hence, we can find a point $b\in B\subset W$ with $\|b-w\|<\delta$. Since $w\in \pi_W(B(0,L))\subset (M-1)\mathbb B$, we conclude that 
    $$\|b\|\leq \|b-w\|+\|w\|<\delta+M-1< 1/M+M-1\leq M$$
Thus $b\in M\mathbb B\cap W$, and since $\|b-w\|<\delta$, we obtain that
$$ \|\Phi_W(V, w)-\Phi_W(U, b)\|<\eta.$$
We can then use the triangle inequality and the fact that $z=\Phi_W(V,w)$ to deduce 
\begin{align*}
    d(0, \Lambda_W(U, B))&\leq\|\Phi_W(U, b)\|\leq \|\Phi_W(V, w)\|+\eta\\
    &= d\big(0,\Lambda_W(V,A)\big)+\eta
\end{align*}
and therefore the claim has been proved. 
$\blacksquare$

From Claim 1 and the choice of $L$, we infer the following inequality:
$$d(0, \Lambda_W(U,B)\big)+2j\leq d\big(0,\Lambda_W(V,A)\big)+\eta+2j<L.$$
This last inequality in combination with Lemma~\ref{l:distancia igual a distancia en interseccion} implies the following claim.

\textbf{Claim 2.}  If $x\in B(0, j)$, then $d\big(x, \Lambda_W(U,B)\big)=d\big(x, \Lambda_W(U,B)\cap B(0,L)\big)$.

Fix $x\in \mathbb R^n$ with $\|x\|<j$. For any $y\in \Lambda_W(U,B)\cap B(0,L)$, we have $b:=\pi_W(y)\in B\cap \pi_W(B(0,L))\subset (M-1)\mathbb B$.
Then, by inequality~(\ref{eq:diferencia de desitancias menor que delta})
$$d(b,A)=|d(b,A)-d(b,B)|<\delta.$$
Therefore we can find a point $a\in A$, with $\|a-b\|<\delta$.
 Since $\|a\|\leq \|a-b\|+\|b\|<\delta+M-1<M$, we get that $a\in M\mathbb B$.
Then, by the choice of $\delta$, we conclude that
$$\|\Phi_W(U,b)-\Phi_W(V,a)\|<\eta.$$
However, since $b=\pi_W(y)$ and $y\in \Lambda_W(U,B)\subset U$, we conclude that $y=\Phi_W(U,b)$. Hence
$$\|y-\Phi_W(V,a)\|=\|\Phi_W(U,b)-\Phi_W(V,a)\|<\eta,$$
and therefore
\begin{align*}
    d\big(x,\Lambda_W(V, A)\big)&\leq \|x-y\|+d\big(y, \Lambda_W(V,A)\big)\\
    &\leq \|x-y\|+\|y-\Phi_W(V,a)\|\\
    &<\|x-y\|+\eta.
\end{align*}

 Since the last inequality holds for every $y\in \Lambda_W(U,B)\cap B(0,L)$, we can then use Claim 2 to conclude that
 \begin{align*}
    d\big(x,\Lambda_W(V,A)\big)&\leq d\big(x, \Lambda_W(U,B)\cap B(0,L)\big)+\eta\\
     &=d\big(x, \Lambda_W(U,B)\big)+\eta.
 \end{align*}

 Thus,  $d\big(x,\Lambda_W(V,A)\big)-d\big(x, \Lambda_W(U,B)\big)<\eta$.
Symmetrically, we can prove that $d\big(x,\Lambda_W(U,B)\big)-d\big(x, \Lambda_W(V,A)\big)<\eta$, and therefore
$$\vert d\big(x,\Lambda_W(V,A)\big)-d\big(x, \Lambda_W(U,B)\big)\vert \leq \eta.$$
Since this is true for every $x\in B(0,j)$, we infer that 
$$\sup_{\|x\|< j}\{|d\big(x,\Lambda_W(V,A)\big)-d\big(x, \Lambda_W(U,B)\big)|\}\leq\eta <1/j.$$
By Lemma~\ref{lem:Lemma-Ananda}, we conclude that
$d_{AW}\big(\Lambda_W(V,A), \Lambda_W(U, B)\big)<1/j\leq \varepsilon$, and now the proof is complete. 
\end{proof}

\begin{theorem}\label{t:main K_kn es haz fibrado}
Define $\Lambda: \mathcal K_k^n\to \Gr_k(n)$ as
\begin{equation}\label{eq: map Lambda}
    \Lambda (A):=q\big(\Phi(A)\big)
\end{equation}
  where $q$ and  $\Phi$ are the continuous maps defined in (\ref{eq:traslacion de F}) and (\ref{eq: map Phi}), respectively.
\begin{enumerate}
    \item $\Lambda$ is a retraction.
    \item For every $W\in \Gr_k(n)$, the set $\Lambda^{-1}(\widetilde W)$ is homeomorphic with the topological product $\widetilde W\times \mathbb R^{n-k}\times\mathcal K^k_k$.
    \item The tuple $(\mathcal K_n^k, \Gr_k(n), \mathbb R^{n-k}\times\mathcal K^k_k, \Lambda)$ is a fiber bundle.
    \item If $\Gamma:\mathcal K_{k,b}^n\to \Gr_k(n)$ denotes the restriction $\Lambda|_{\mathcal {K}_{k,b}^n}$, then $\Gamma^{-1}(\widetilde W)$ is homeomorphic with the topological product $\widetilde W\times\mathbb R^{n-k}\times  \mathcal K_{k,b}^k$ and the tuple $(\mathcal K_n^k, \Gr_k(n), \mathbb R^{n-k}\times  \mathcal K_{k,b}^k, \Gamma)$ is a fiber bundle.
\end{enumerate}
\end{theorem}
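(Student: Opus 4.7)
For (1), observe that $\Lambda = q \circ \Phi$ will be continuous by Lemma~\ref{l: q is continuo} and Proposition~\ref{p: casco affin es continua}. Since every $W \in \Gr_k(n)$ is itself a $k$-flat with $p(W)=0$, one has $\Phi(W) = W$ and $q(W) = W$, so $\Lambda$ restricts to the identity on $\Gr_k(n)$ and is therefore a retraction.

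For (2) and (3), my plan is to build a local trivialization explicitly. Fix $W \in \Gr_k(n)$ and, once and for all, choose linear isometries $W \cong \mathbb R^k$ and $W^\perp \cong \mathbb R^{n-k}$; these induce an identification $\mathcal K_k^W \cong \mathcal K_k^k$. Recall from Theorem~\ref{t: grasmaniana afin es haz fibrado sobre la lineal} the homeomorphism $\varphi_W(V,\omega) = V+\omega$ from $\widetilde W \times W^\perp$ onto $q^{-1}(\widetilde W)$, and from Lemma~\ref{l:w_V inducida es continua} the continuous map $\Lambda_W$ on $\widetilde W \times \mathcal K_k^W$ that satisfies $\Lambda_W(V,B) \subset V$ and $\pi_W(\Lambda_W(V,B)) = B$. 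I will set
\begin{equation*}
\Psi_W^{-1}(V,\omega,B) := \Lambda_W(V,B) + \omega,
\end{equation*}
which is continuous by Lemma~\ref{l:w_V inducida es continua} and Proposition~\ref{p: basic properties}-(2). To see its image lies in $\Lambda^{-1}(\widetilde W)$, note $\Aff(\Lambda_W(V,B)+\omega) = V+\omega$, and property (P5) with $\omega \in W^\perp$ yields $p(V+\omega) = \omega - \pi_V(\omega)$, hence $q(V+\omega) = V$.

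As candidate inverse I would take
\begin{equation*}
\Psi_W(A) := \bigl(V,\omega,\pi_W(A-\omega)\bigr), \qquad (V,\omega) := \varphi_W^{-1}(\Phi(A)).
\end{equation*}
Because $A \subset V+\omega$ and $\pi_W|_V$ is a linear isomorphism (since $V \in \widetilde W$), the third coordinate is a well-defined element of $\mathcal K_k^W$. The identities $\pi_W \circ \Lambda_W(V,\cdot) = \mathrm{id}$ and $\varphi_W^{-1}(V+\omega) = (V,\omega)$ for $\omega \in W^\perp$ will show that $\Psi_W$ and $\Psi_W^{-1}$ are mutually inverse. Continuity of $\Psi_W$ will come from the continuity of $\Phi$ and $\varphi_W^{-1}$ for the first two coordinates, from Proposition~\ref{p: basic properties}-(2) applied to the translation $A \mapsto A - \omega(A)$, and from continuity of the hyperspace map induced by the $1$-Lipschitz linear projection $\pi_W$. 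Part (3) then follows from the open cover $\{\widetilde W\}_{W \in \Gr_k(n)}$ of Theorem~\ref{t: W tilde es abierto} together with the fiberwise identity $\Lambda(\Psi_W^{-1}(V,\omega,B)) = V$.

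For (4), notice that $\Lambda_W(V,B) = \{\Phi_W(V,w) : w \in B\}$ is the image of $B$ under the linear isomorphism $\Phi_W(V,\cdot)\colon W \to V$; as a linear map between finite-dimensional normed spaces it is bi-Lipschitz, so boundedness of $B$ transfers to $\Lambda_W(V,B)$ and vice versa, and similarly $\pi_W|_V$ preserves boundedness. Therefore the restriction of $\Psi_W^{-1}$ to $\widetilde W \times W^\perp \times \mathcal K_{k,b}^W$ will be a bijection onto $\Gamma^{-1}(\widetilde W)$, and its continuity in the Hausdorff topology will be supplied by Lemma~\ref{l:w_V inducida es continua}-(2). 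I expect the main obstacle to be verifying continuity of the third coordinate of the forward map $\Psi_W$, namely $A \mapsto \pi_W(A - \omega(A))$ in the Attouch-Wets topology when both $A$ and $\omega(A)$ vary; this will be resolved by restricting the analysis to the fiber $V+\omega$, on which $\pi_W$ is a bi-Lipschitz affine isomorphism, and by invoking Lemma~\ref{lem:Lemma-Ananda} to pass between the Attouch-Wets metric and a local Hausdorff comparison.
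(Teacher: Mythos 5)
Your proposal is correct and follows essentially the same route as the paper: the same trivialization $(V,\omega,A)\mapsto \Lambda_W(V,A)+\omega$ built from $\varphi_W$ and $\Lambda_W$, the same inverse $B\mapsto\bigl(q(\Phi(B)),\,\omega(B),\,\pi_W(B-\omega(B))\bigr)$, and the same reduction of part (4) to Lemma~\ref{l:w_V inducida es continua}-(2). The continuity of the third coordinate of the inverse, which you flag as the main obstacle, is treated just as tersely in the paper, so your sketched resolution via the bi-Lipschitz restriction of $\pi_W$ to each fiber is a welcome (but not divergent) elaboration.
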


\begin{proof}
    Since $\Phi$ and $q$ are retractions, the map $\Lambda$ is a retraction. 
Fix an element $W\in \Gr_k(n)$. Since $W$ is linearly isomorphic with $\mathbb R^k$, it is clear that $\mathcal K_k^k$ is homeomorphic with $\mathcal K_k^W$. Similarly,
    $W^\perp$ is linearly isomorphic with $\mathbb R^{n-k}$. Thus, in order to prove $(2)$ and $(3)$, it is enough to exhibit a homeomorphism $\psi_W:\widetilde W\times W^\perp\times\mathcal K_k^W\to \Lambda^{-1}(\widetilde W)$ such that 
    \begin{equation}\label{eq:lo que debe cumplir Psi}
        \Lambda\big(\psi_W(V,\omega,A)\big)=V, \;\text{for all }(V,\omega,A)\in \widetilde W\times W^\perp\times\mathcal K_k^W.
    \end{equation}

To that end, consider the continuous map $\Lambda_W$ from Lemma~\ref{l:w_V inducida es continua}, and define $\psi_W: \widetilde W\times W^\perp\times\mathcal K_k^W\to \Lambda^{-1}(\widetilde W)$ as
$$\psi_W(V,\omega, A):=\Lambda_W(V, A)+\omega.$$
By Proposition~\ref{p: basic properties} and the continuity of $\Lambda_W$, the map $\psi_W$ is continuous. 

If $(V,\omega,A)\in \widetilde W\times W^\perp\times\mathcal K_k^W $,
since $\Lambda_W(V, A)+\omega\subset V+\omega$, it is clear that $\Phi(\Lambda_W(V, A)+\omega)=V+\omega$ and therefore
\begin{align*}
\Lambda\big(\psi_W(V,\omega,A)\big)=q\big(\Phi(\Lambda_W(V, A)+\omega)\big)=q(V+\omega)=V\in \widetilde W.
\end{align*}
This implies that the map $\psi_W$ satisfies equation~(\ref{eq:lo que debe cumplir Psi}) and $\psi_W(V,\omega, A)$ effectively lies in $\Lambda^{-1}(\widetilde W)$.

It rests us to prove that $\psi_W$ is bijective and the inverse is continuous. To do this, consider the homeomorphism $\varphi_W$ defined in equation~(\ref{eq.definicion varphi W}).

To see that $\psi_W$ is injective, assume that $(V,\omega,A), (V',\omega',A')\in \widetilde W\times W^\perp\times\mathcal K_k^W $ satisfy 
$\psi_W(V,\omega,A)=\psi_W(V',\omega',A').$
Then
\begin{align*}
\varphi_W(V,\omega)&=V+\omega=\Phi\big(\psi_W(V,\omega,A)\big)\\
&=\Phi\big(\psi_W(V',\omega',A')\big)=V'+\omega'\\
&=\varphi_W(V',\omega').
\end{align*}

Since $\varphi$ is injective, we conclude that $V=V'$ and $\omega=\omega'$. Hence
$\Lambda_W(V,A)=\psi_W(V, \omega, A)-\omega=\psi_W(V, \omega, A')-\omega=\Lambda_W(V,A')$.
We can then use equality~(\ref{eq: proyeccion de lambda  es A}) to conclude that
$$A=\pi_W\big(\Lambda_W(V,A)\big)=\pi_W\big(\Lambda_W(V,A')\big)=A'.$$
This last equality implies that $\psi_W$ is injective. 

To prove that $\psi_W$ is surjective, recall that the inverse of $\varphi_W$ is given by 
$$\varphi_W^{-1}(F)=\Big(q(F), p(F)-\Phi_W\big(q(F),\pi_{W}(p(F)) \big)\Big)$$
 where  $p:\mathcal K^n\to\mathbb R^n$  is the map defined in equation~(\ref{eq:p}) and $\Phi_W$ is the map from Lemma~\ref{l:w_V es continua}. 
Thus, for every  $B\in\Lambda^{-1}(\widetilde W)$, 
there exists $(V,\omega)\in\widetilde W\times W^\perp$ such that
$$\varphi_W^{-1}(\Phi(B))=(V,\omega).$$
From here we infer that $V+\omega=\varphi_W(V,\omega)=\Phi(B)$ and therefore $V=\Phi(B)-\omega.$
Since $B\subset \Phi(B)$, we conclude that $B-\omega\subset V$. Define
$$A:=\pi_W(B-\omega).$$
Clearly $A\in\mathcal K^W_k$ and $\Lambda_W(V,A)=B-\omega$. Thus, 
\begin{align*}
    \psi_W(V, \omega, A)=\Lambda_W(V, A)+\omega=B-\omega+\omega=B.
\end{align*}
This last equality implies that $\psi_W$ is surjective. 
Furthermore, we have an expression for $\psi_W^{-1}$.
Indeed, let us  denote the second coordinate of the map $\varphi_W^{-1}$ by $f$,  namely
$$f(F):=p(F)-\Phi_W\big(q(F),\pi_{W}(p(F)) \big),\quad F\in q^{-1}(\widetilde W),$$
then 
 $\psi_W^{-1}:\Lambda^{-1}(\widetilde W)\to \widetilde W\times W^\perp\times \mathcal K_k^W$ can be written as
 $$\psi_W^{-1}(B):=\big(q(\Phi(B)), f(\Phi(B)), \pi_W(B-f(B))\big).$$

Since all maps involved in the expression of $\psi_W^{-1}$ are continuous, we conclude that $\psi_W^{-1}$ is continuous. This completes the proof of (2) and (3).

It only rests us to prove (4). However, this follows directly from (2), (3), and Lemma~\ref{l:w_V inducida es continua}-(2). 
    
\end{proof}

As we observed in Proposition~\ref{p: topologia de Knn}, the hyperspace $\mathcal K^{k}_k$ is a $Q$-manifold. Since $\Gr_k(n)$ is a manifold, every open subset in $\Gr_k(n)$  is an $\mathrm{ANR}$. In particular $\widetilde W$ and $\widetilde W\times \mathbb R^{n-k}$ are $\mathrm{ANR}$ for every $W\in \Gr_k(n)$.
This, in combination with Theorem~\ref{thm: X times Q}, implies that $\Lambda^{-1}(\widetilde W)\cong \widetilde W\times\mathbb R^{n-k}\times \mathcal K_k^k$ is a $Q$-manifold. Since the family
$\{\Lambda^{-1}(\widetilde W)\}_{W\in\Gr_k(n)}$ is an open cover of $\mathcal K_k^n$, we can use Remark~\ref{r: abiertos en Q manifolds} to conclude the following.

\begin{corollary}\label{c:K_kn es Q manifold}
If $0<k\leq n$, 
    $\mathcal K^{n}_k$ is a $Q$-manifold.
\end{corollary}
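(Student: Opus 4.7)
The plan is to exploit the fiber bundle decomposition from Theorem~\ref{t:main K_kn es haz fibrado} and reduce the $Q$-manifold question to a purely local one that can be handled by Theorem~\ref{thm: X times Q}, exactly as sketched in the paragraph preceding the corollary statement.

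First I would recall that Theorem~\ref{t:main K_kn es haz fibrado}(2) gives, for every $W\in\Gr_k(n)$, a homeomorphism
$$\Lambda^{-1}(\widetilde W)\;\cong\;\widetilde W\times \mathbb R^{n-k}\times\mathcal K^k_k,$$
and that by Theorem~\ref{t: W tilde es abierto} the family $\{\widetilde W\}_{W\in\Gr_k(n)}$ is an open cover of $\Gr_k(n)$, so $\{\Lambda^{-1}(\widetilde W)\}_{W\in\Gr_k(n)}$ is an open cover of $\mathcal K^n_k$. By Remark~\ref{r: abiertos en Q manifolds}(2), it therefore suffices to prove that each $\Lambda^{-1}(\widetilde W)$ is a $Q$-manifold.

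Next, since $\Gr_k(n)$ is a finite-dimensional manifold, the open subset $\widetilde W$ is an ANR; finite products of ANRs are ANRs, so $X:=\widetilde W\times\mathbb R^{n-k}$ is an ANR, and Theorem~\ref{thm: X times Q} yields that $X\times Q$ is a $Q$-manifold. On the other hand, Proposition~\ref{p: topologia de Knn}(3) tells us that $\mathcal K^k_k$ is a $Q$-manifold, so every point of $\mathcal K^k_k$ has an open neighborhood homeomorphic to some open set $U\subset Q$. Consequently $X\times\mathcal K^k_k$ is locally homeomorphic to $X\times U$, which is open in the $Q$-manifold $X\times Q$ and therefore itself a $Q$-manifold by Remark~\ref{r: abiertos en Q manifolds}(1). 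Invoking Remark~\ref{r: abiertos en Q manifolds}(2) once more, $X\times\mathcal K^k_k\cong\Lambda^{-1}(\widetilde W)$ is a $Q$-manifold, and hence so is $\mathcal K^n_k$.

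There is no genuine obstacle in this argument: the bulk of the work has already been carried out in Theorem~\ref{t:main K_kn es haz fibrado} and Proposition~\ref{p: topologia de Knn}(3), so the corollary is essentially a packaging statement. The only substantive point is the observation that the product of an ANR with a $Q$-manifold is a $Q$-manifold, which follows by combining Theorem~\ref{thm: X times Q} with the local character of the $Q$-manifold definition.
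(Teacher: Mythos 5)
Your argument is correct and follows essentially the same route as the paper: the open cover $\{\Lambda^{-1}(\widetilde W)\}_{W\in\Gr_k(n)}$ coming from Theorem~\ref{t:main K_kn es haz fibrado}, the identification $\Lambda^{-1}(\widetilde W)\cong\widetilde W\times\mathbb R^{n-k}\times\mathcal K^k_k$, Proposition~\ref{p: topologia de Knn}(3), Theorem~\ref{thm: X times Q}, and Remark~\ref{r: abiertos en Q manifolds}. Your only addition is to spell out the local argument showing that the product of an ANR with a $Q$-manifold is a $Q$-manifold, a step the paper leaves implicit.
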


In the case of $\mathcal K^{n}_{k,b}$ we can be a little more specific.

\begin{corollary}\label{c:K_kbn es Q manifold}
Let $0<k< n$.
The hyperspace $\mathcal K_{k,b}^n$ is a $Q$-manifold and the map     $\Gamma:\mathcal K_{k,b}^n\to \Gr_k(n)$ 
from Theorem~\ref{t:main K_kn es haz fibrado}-4 is a fiber bundle projection with fiber $\mathbb R^{\frac{k(k+1)+2n}{2}}\times Q$.

\end{corollary}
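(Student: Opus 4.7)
The plan is to compose Theorem~\ref{t:main K_kn es haz fibrado}-(4) with Theorem~\ref{th:cb(n)} to identify the fiber explicitly, and then invoke Theorem~\ref{thm: X times Q} together with Remark~\ref{r: abiertos en Q manifolds} to deduce the $Q$-manifold structure of $\mathcal K_{k,b}^n$.

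First, I would read off from Theorem~\ref{t:main K_kn es haz fibrado}-(4) that for every $W\in\Gr_k(n)$ there is a homeomorphism
$$\Gamma^{-1}(\widetilde W)\;\cong\;\widetilde W\times \mathbb R^{n-k}\times \mathcal K_{k,b}^k,$$
and that $(\mathcal K_{k,b}^n,\Gr_k(n),\mathbb R^{n-k}\times\mathcal K_{k,b}^k,\Gamma)$ is a fiber bundle. To rewrite the fiber in the form claimed, I would apply Theorem~\ref{th:cb(n)} (with $n$ replaced by $k$, so that $k\geq 2$ is implicit), which gives $\mathcal K_{k,b}^k\cong Q\times\mathbb R^{k(k+3)/2}$. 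The elementary identity
$$(n-k)+\frac{k(k+3)}{2}\;=\;\frac{k(k+1)+2n}{2}$$
then yields
$$\mathbb R^{n-k}\times\mathcal K_{k,b}^k\;\cong\;\mathbb R^{\frac{k(k+1)+2n}{2}}\times Q,$$
which is exactly the fiber asserted in the statement.

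For the $Q$-manifold conclusion, I would note that $\widetilde W$ is an open subset of the smooth manifold $\Gr_k(n)$ and hence an $\mathrm{ANR}$; consequently $\widetilde W\times \mathbb R^{(k(k+1)+2n)/2}$ is an $\mathrm{ANR}$, and Theorem~\ref{thm: X times Q} ensures that
$$\widetilde W\times \mathbb R^{\frac{k(k+1)+2n}{2}}\times Q$$
is a $Q$-manifold. Transporting this along the homeomorphism above, $\Gamma^{-1}(\widetilde W)$ is a $Q$-manifold for every $W\in\Gr_k(n)$. Since $\{\Gamma^{-1}(\widetilde W)\}_{W\in\Gr_k(n)}$ is an open cover of $\mathcal K_{k,b}^n$, Remark~\ref{r: abiertos en Q manifolds} finishes the argument.

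There is no serious obstacle: this is essentially a bookkeeping step assembling the local triviality produced by Theorem~\ref{t:main K_kn es haz fibrado}-(4), the topological description of $\mathcal K_{k,b}^k$ recalled from the literature in Theorem~\ref{th:cb(n)}, and Chapman's stabilization principle that $X\times Q$ is a $Q$-manifold whenever $X$ is an $\mathrm{ANR}$. The only delicate point worth mentioning is the dimension arithmetic above, which has to be carried out carefully so that the exponent matches the statement.
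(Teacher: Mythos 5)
Your proof is correct and follows essentially the same route as the paper: read off the bundle from Theorem~\ref{t:main K_kn es haz fibrado}-(4), rewrite the fiber $\mathbb R^{n-k}\times\mathcal K_{k,b}^k$ via Theorem~\ref{th:cb(n)} and the dimension identity $(n-k)+\tfrac{k(k+3)}{2}=\tfrac{k(k+1)+2n}{2}$, and then conclude that each $\Gamma^{-1}(\widetilde W)$ is a $Q$-manifold using Theorem~\ref{thm: X times Q} and the open cover $\{\Gamma^{-1}(\widetilde W)\}_{W\in\Gr_k(n)}$ together with Remark~\ref{r: abiertos en Q manifolds}. Your parenthetical observation that Theorem~\ref{th:cb(n)} requires $k\geq 2$ is a point the paper's own proof passes over silently, so flagging it is if anything slightly more careful.
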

\begin{proof}
By Theorem~\ref{t:main K_kn es haz fibrado}-4, 
the tuple $(\mathcal K_n^k, \Gr_k(n), \mathbb R^{n-k}\times  \mathcal K_{k,b}^k, \Gamma)$ is a fiber bundle. However, by Theorem~\ref{th:cb(n)}, $\mathcal K_{k,b}^k$ is homeomorphic with
$\mathbb R^{\frac{k(k+3)}{2}} \times Q$. Hence
$$\mathbb R^{n-k}\times  \mathcal K_{k,b}^k\cong \mathbb R^{n-k}\times\mathbb R^{\frac{k(k+3)}{2}}\times Q\cong \mathbb R^{\frac{k(k+1)+2n}{2}}\times Q.$$

In particular, for every $W\in\Gr_k(n)$,
$$\Gamma^{-1}(\widetilde W)\cong \widetilde W\times \mathbb R^{n-k}\times  \mathcal K_{k,b}^k\cong \widetilde W\times \mathbb R^{\frac{k(k+1)+2n}{2}}\times Q.$$
Since $\widetilde W\times \mathbb R^{\frac{k(k+1)+2n}{2}}$ is an $\mathrm{ANR}$, Theorem~\ref{thm: X times Q} implies that $\Gamma^{-1}(\widetilde W)$ is a $Q$-manifold. We can now use Remark~\ref{r: abiertos en Q manifolds} and the fact that $$\{\Gamma^{-1}(\widetilde W)\}_{W\in \Gr_k(n)}$$ is an open cover of $\mathcal K_{k,b}^n$ to conclude that 
$\mathcal K_{k,b}^n$  is a $Q$-manifold, as desired. 
\end{proof}

\end{document}